\listfiles
\documentclass[final]{siamltex}
\usepackage{amssymb}
\usepackage{amsmath}
\usepackage{amscd}
\usepackage{latexsym}
\usepackage{cite}
\usepackage{showkeys}
\usepackage{ifthen}
\usepackage[dvips,bookmarks=true]{hyperref}

\frenchspacing
\sloppy



\providecommand{\Bdsymb}{\Gamma}                   
\providecommand{\Div}{\operatorname{div}}          
\providecommand{\bDiv}{{\Div}_{\Bdsymb}}           
\providecommand{\curl}{\operatorname{{\bf curl}}}  
\providecommand{\bcurl}{{\curl}_{\Bdsymb}}         
\providecommand{\grad}{\operatorname{{\bf grad}}}       

\providecommand{\Dim}{\operatorname{dim}}            
\providecommand{\dim}{\Dim}



\providecommand*{\Kern}[1]{{\rm Ker}({#1})}                  




\newcommand{\Va}{{\mathbf{a}}}
\newcommand{\Vb}{{\mathbf{b}}}

\newcommand{\Vh}{{\mathbf{h}}}

\newcommand{\Vk}{{\mathbf{k}}}

\newcommand{\Vn}{{\mathbf{n}}}

\newcommand{\Vp}{{\mathbf{p}}}
\newcommand{\Vq}{{\mathbf{q}}}

\newcommand{\Vu}{{\mathbf{u}}}
\newcommand{\Vv}{{\mathbf{v}}}
\newcommand{\Vw}{{\mathbf{w}}}

\newcommand{\Vy}{{\mathbf{y}}}
\newcommand{\Vz}{{\mathbf{z}}}


\providecommand{\Ba}{{\boldsymbol{a}}}

\providecommand{\Bn}{{\boldsymbol{n}}}

\providecommand{\Bt}{{\boldsymbol{t}}}

\providecommand{\Bx}{{\boldsymbol{x}}}
\providecommand{\By}{{\boldsymbol{y}}}
\providecommand{\Bz}{{\boldsymbol{z}}}



\providecommand{\BH}{{\boldsymbol{H}}}

\providecommand{\BL}{{\boldsymbol{L}}}



\newcommand{\epsilonbf}{\boldsymbol{\epsilon}}

\newcommand{\mubf}{\boldsymbol{\mu}}

\newcommand{\xibf}{\boldsymbol{\xi}}




\providecommand{\Cf}{{\cal F}}

\providecommand{\Ch}{{\cal H}}

\providecommand{\Cp}{{\cal P}}

\providecommand{\Cw}{{\cal W}}
\providecommand{\Cx}{{\cal X}}
\providecommand{\Cy}{{\cal Y}}


\newcommand{\Chv}{\boldsymbol {\cal H}}

\newcommand{\Cpv}{\boldsymbol {\cal P}}


\providecommand{\bbN}{\mathbb{N}}

\providecommand{\bbR}{\mathbb{R}}






\providecommand*{\N}[1]{\left\|{#1}\right\|} 
\newcommand*{\SN}[1]{\left|{#1}\right|}      
\newcommand*{\SP}[2]{\left({#1},{#2}\right)} 




\newcommand*{\Op}[1]{\mathsf{#1}} 





\providecommand*{\Lp}[2][\defaultdomain]{L^{#2}({#1})}
\newcommand*{\Lpv}[2][\defaultdomain]{\BL^{#2}({#1})}
\newcommand*{\NLp}[3][\defaultdomain]{\N{#2}_{\Lp[#1]{#3}}}

\newcommand*{\Ltwo}[1][\defaultdomain]{\Lp[#1]{2}}
\newcommand*{\Ltwov}[1][\defaultdomain]{\Lpv[#1]{2}}
\newcommand*{\NLtwo}[2][\defaultdomain]{\NLp[#1]{#2}{2}}

\newcommand*{\SPLtwo}[3][\defaultdomain]{\SP{#2}{#3}_{\Ltwo[#1]}}
\newcommand*{\Linf}[1][\defaultdomain]{L^{\infty}({#1})}





\newcommand*{\Hm}[2][\defaultdomain]{H^{#2}({#1})}
\newcommand*{\Hmv}[2][\defaultdomain]{\BH^{#2}({#1})}
\newcommand*{\bHm}[3][\defaultdomain]{H_{#3}^{#2}({#1})}

\newcommand*{\NHm}[3][\defaultdomain]{{\N{#2}}_{\Hm[{#1}]{#3}}}

\newcommand*{\SNHm}[3][\defaultdomain]{{\SN{#2}}_{\Hm[{#1}]{#3}}}

\newcommand*{\Hone}[1][\defaultdomain]{\Hm[#1]{1}}
\newcommand*{\Honev}[1][\defaultdomain]{\Hmv[#1]{1}}

\newcommand*{\zbHone}[1][\defaultdomain]{\bHm[#1]{1}{0}}

\newcommand*{\NHone}[2][\defaultdomain]{{\N{#2}}_{\Hone[{#1}]}}

\newcommand*{\SNHone}[2][\defaultdomain]{{\SN{#2}}_{\Hone[{#1}]}}

\newcommand{\hlb}{\frac{1}{2}}
\providecommand*{\Hh}[1][\defaultboundary]{\Hm[#1]{\hlb}}

\newcommand*{\SNHh}[2][\defaultboundary]{\SN{#2}_{\Hh[#1]}}

\newcommand*{\Hdiv}[1][\defaultdomain]{\boldsymbol{H}(\Div,{#1})}
\newcommand*{\bHdiv}[2][\defaultdomain]{\boldsymbol{H}_{#2}(\Div,{#1})}
\newcommand*{\zbHdiv}[1][\defaultdomain]{\bHdiv[#1]{0}}
\newcommand*{\kHdiv}[1][\defaultdomain]{\boldsymbol{H}(\Div0,{#1})}
\newcommand*{\bkHdiv}[2][\defaultdomain]{\boldsymbol{H}_{#2}(\Div0,{#1})}
\newcommand*{\zbkHdiv}[1][\defaultdomain]{\bkHdiv[#1]{0}}
\newcommand*{\NHdiv}[2][\defaultdomain]{\N{#2}_{\Hdiv[#1]}}

\newcommand*{\Hcurl}[1][\defaultdomain]{\boldsymbol{H}(\curl,{#1})}
\newcommand*{\bHcurl}[2][\defaultdomain]{\boldsymbol{H}_{#2}(\curl,{#1})}
\newcommand*{\zbHcurl}[1][\defaultdomain]{\bHcurl[#1]{0}}
\newcommand*{\kHcurl}[1][\defaultdomain]{\boldsymbol{H}(\curl0,{#1})}

\newcommand*{\NHcurl}[2][\defaultdomain]{\N{#2}_{\Hcurl[#1]}}






\title{Discrete Compactness for $p$-Version of Tetrahedral Edge Elements}
\author{ R. Hiptmair\thanks{SAM, ETH Z\"urich, CH-8092 Z\"urich,
    hiptmair\symbol{64}sam.math.ethz.ch}}

%
%
\newcommand{\Poinc}{\mathsf{R}}
\newcommand{\mesh}{\mathcal{M}}
\renewcommand{\grad}{\operatorname{\bf grad}}

\newcommand{\Hdive}[1][\defaultdomain]{\boldsymbol{H}(\Div_{\epsilonbf}0,{#1})}
\newcommand{\zbHdive}[1][\defaultdomain]{\boldsymbol{H}_{0}(\Div_{\epsilonbf}0,{#1})}

\newtheorem{remark}{Remark}[section]

\begin{document}

\maketitle

\centerline{\textsf{Report 2008-31, Seminar for Applied Mathematics, ETH Zurich}}

\begin{abstract}
  We consider the first family of $\Hcurl$-conforming Ned\'el\'ec finite elements on
  tetrahedral meshes. Spectral approximation ($p$-version) is achieved by keeping the
  mesh fixed and raising the polynomial degree $p$ uniformly in all mesh cells. We
  prove that the associated subspaces of discretely weakly divergence free piecewise
  polynomial vector fields enjoy a long conjectured discrete compactness property
  as $p\to\infty$. This permits us to conclude asymptotic spectral correctness of
  spectral Galerkin finite element approximations of Maxwell eigenvalue problems.
\end{abstract}

\begin{keywords} 
  Edge elements, Maxwell eigenvalue problem, discrete compactness, 
  Poincar\'e lifting, projection based interpolation
\end{keywords}

\begin{AMS}
   65N30, 65N25, 78M10
\end{AMS}

\pagestyle{myheadings}
\thispagestyle{plain}
\markboth{R. Hiptmair}{Discrete compactness}

\section{Introduction}
\label{sec:introduction}

Identifying spectrally correct conforming Galerkin approximations of the Maxwell
eigenvalue problem \cite{COD03}: seek\footnote{We use the customary notations for
  Sobolev spaces like $\Hm{s}$, $\Hcurl$, $\Hdiv$, etc., and write $\kHcurl$,
  $\kHcurl$, etc., for the kernels of differential operators. The reader is referred
  to \cite[\S~I.2]{GIR86} and \cite[Sect.~2.4]{HIP02} for more information.}
$\Vu\in\Hcurl$ and $\omega>0$, such that
\begin{gather}
  \label{eq:101}
  \SPLtwo{\mubf^{-1}\curl\Vu}{\curl\Vv} = \omega^{2}\SPLtwo{\epsilonbf\Vu}{\Vv}\quad
  \forall \Vv\in\Hcurl\;,
\end{gather}
($\epsilonbf,\mubf\in(\Linf)^{3,3}$ uniformly positive definite material tensors) has turned
out to be a highly inspiring challenge in numerical analysis. Obviously,
eigenfunctions of \eqref{eq:101} belong to $\Hcurl\cap\zbHdive$ and the compact
embedding $\Ltwov\hookrightarrow\Hcurl\cap\Hdive$ \cite{JOC97} relates \eqref{eq:101}
to an eigenvalue problem for a compact selfadjoint operator. However, asymptotically
dense families of finite elements in $\Hcurl\cap\Hdive$ are not known in general.

Let us assume that a merely $\Hcurl$-conforming family
$\left(\Cw^{1}_{p}(\mesh)\right)_{p\in\mathbb{N}}$ of finite dimensional trial and
test spaces $\Cw^{1}_{p}(\mesh)\subset\Hcurl$ for \eqref{eq:101} is employed for the
Galerkin discretization of \eqref{eq:101}. The corresponding discrete eigenfunctions
$\Vu_{p}\in \Cw^{1}_{p}(\mesh)$, if they exist, will satisfy
\begin{gather}
  \label{eq:102}
  \Vu_{p} \in \Cx^{1}_{p}(\mesh) := \{\Vw_{p}\in \Cw^{1}_{p}(\mesh):
  \SPLtwo{\epsilonbf\Vw_{p}}{\Vv_{p}} = 0\;\forall \Vv_{p}\in \Kern{\curl}\cap
  \Cw_{p}^{1}(\mesh)\}\;.
\end{gather}
We cannot expect $\Cx_{p}^{1}(\mesh)\subset \Hcurl\cap\zbHdive$ and, thus, a standard
Galerkin approximation of \eqref{eq:101} boils down to an outer approximation of the
eigenvalue problem. Good approximation properties of the finite element space no
longer automatically translate into convergence of eigenvalues and eigenfunctions. As
investigated Caorsi, Fernandes and Rafetto in \cite{CFR00}, an array of other
requirements has to be met by the finite element spaces, the most prominent of which
is the \emph{discrete compactness property} \cite{ANS71}.

\begin{definition}
  \label{def:dc}
  The \emph{discrete compactness property} holds for an asymptotically dense family
  $\left(\Cw^{1}_{p}(\mesh)\right)_{p\in\mathbb{N}}$ of finite dimensional subspaces of
  $\Hcurl$, if any \emph{bounded} sequence in $\Cx_{p}^{1}(\mesh)\subset\Hcurl$ contains a
  subsequence that \emph{converges in $\Ltwov$}. 
\end{definition}

The same notion applies in the case of homogeneous Dirichlet boundary conditions,
when \eqref{eq:101} is considered in $\zbHcurl$. In this case the eigenfunctions 
will belong to $\zbHcurl\cap\Hdive$ and zero tangential trace on $\partial\Omega$
has to be imposed on trial and test functions. 

The discrete compactness property of $\Cw^{1}_{p}(\mesh)$ is key to establishing
spectral correctness and asymptotic optimality of Galerkin approximations of
\eqref{eq:101}, see \cite{CFR00,CFR01,COD03} for details. Small wonder, that
substantial effort has been spent on proving this property for various asymptotically
dense families of $\Hcurl$/$\zbHcurl$-conforming finite elements. For the $h$-version
of Ned\'el\'ec's edge elements Kikuchi \cite{KIK01,KIK89,KIK87} accomplished the
first proof, which was later generalized in \cite{BOF99,DMS00,MOD99}, see
\cite[Sect.~7.3.2]{MON03}, \cite[Sect.~4]{HIP02}, and \cite{COD03} for a survey. Conversely,
spectral edge element schemes in 3D have long defied all attempts to prove their discrete
compactness property, though they perform well for Maxwell eigenvalue problems
\cite{COD03,COL05,RAD00}. Partial success was reported for edge
elements in 2D: In \cite{BDC03} the analysis of the discrete compactness property for
triangular $hp$ finite elements has been tackled, but the proof of the main result
relied on a conjectured $L^{2}$ estimate, which had only been demonstrated numerically.
The first fully rigorous analysis of 2D $hp$ edge elements on rectangles was devised in
\cite{BCD06}.

In \cite[Remark~15]{HIP02} an interpolation estimate was identified as crucial
missing step in the analysis. Since then, two major advances have paved the way for
closing the gap:
\begin{enumerate}
\item In \cite{CMI08} M. Costabel and M. McIntosh discovered a construction of 
  $\Honev$-stable vector potentials by means of a smoothed Poincar\'e mapping.
  This will be reviewed in Sect.~\ref{sec:poincare-lifting} of the present paper.
\item In the breakthrough paper \cite{DEB04} L. Demkowicz and A. Buffa achieved a
  comprehensive analysis of commuting projection based interpolation operators. To
  maintain the article self-contained, their approach will be explained in
  Sect.~\ref{sec:proj-based-interp} and their interpolation error estimates will be
  presented in Sect.~\ref{sec:interp-estim}.
\end{enumerate}
In addition, we exploit the possibility to construct high order versions of
Ned\'el\'ec's first family of edge elements \cite{NED80} by using Cartan's Poincar\'e
map \cite{HIP96b,HIP02,HIP01a,AFW06}, see Sect.~\ref{sec:edge-elements} for
details. Another important tool are stable polynomial preserving extension operators
developed, for example, in \cite{AID03,MUN97a,DGS07,BAS87}. In addition, we heavily
rely on spectral polynomial approximation estimates, see \cite{MUN97a,SAB98,BEM97b}.

Thus, standing on the shoulders of giants and combining all these profound theories
of numerical analysis, this article manages to give the first proof for the discrete
compactness property of the $p$-version for the first family of Ned\'el\'ec's edge
elements on tetrahedral meshes of Lipschitz polyhedra $\Omega$, consult
Sect.~\ref{sec:discrete-compactness} for the proof.

\begin{theorem}
  \label{thm:MAIN}
  The sequence $\left(\Cw^{1}_{p}(\mesh)\right)_{p\in\bbN}$ of trial spaces generated
  by the $p$-version of the first family of Ned\'el\'ec's $\Hcurl$- or
  $\zbHcurl$-conforming finite elements on a fixed tetrahedral mesh $\mesh$ of a
  bounded Lipschitz polyhedron $\Omega\subset\bbR^{3}$ satisfies the discrete compactness
  property.
\end{theorem}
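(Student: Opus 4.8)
The plan is to use the three tools assembled above---the $\Honev$-stable Poincar\'e lifting of Costabel--McIntosh (Sect.~\ref{sec:poincare-lifting}), the commuting projection-based interpolation operators $\Pi_p^j$ of Demkowicz--Buffa (Sect.~\ref{sec:proj-based-interp}), and the $p$-uniform interpolation error estimates (Sect.~\ref{sec:interp-estim})---to reduce the assertion to the classical compact embedding $\Honev\compemb\Ltwov$. Let $(\Vu_p)_p$ be a sequence with $\Vu_p\in\Cx^1_p(\mesh)$ that is bounded in $\Hcurl$. After passing to a subsequence we may assume $\Vu_p\rightharpoonup\Vu$ weakly in $\Hcurl$, and the task is to upgrade this to strong convergence in $\Ltwov$.

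First I would apply the smoothed Poincar\'e lifting to the curls. Since $\curl\Vu_p$ is divergence free and bounded in $\Ltwov$, the lifting furnishes vector potentials $\Vz_p\in\Honev$ with $\curl\Vz_p=\curl\Vu_p$ and $\NHonev{\Vz_p}\le C\,\NLtwov{\curl\Vu_p}$, uniformly in $p$. By Rellich's theorem a further subsequence satisfies $\Vz_p\to\Vz$ strongly in $\Ltwov$, so the regular part is precompact; the point is to transfer this compactness into the discrete spaces.

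The decisive step invokes the interpolation operator $\Pi^1_p$. Its commuting property yields $\curl(\Pi^1_p\Vz_p)=\Pi^2_p(\curl\Vz_p)=\Pi^2_p(\curl\Vu_p)=\curl\Vu_p$, the last identity because $\curl\Vu_p\in\Cw^2_p(\mesh)$, the face-element space completing the discrete de~Rham complex, is left invariant by the projection $\Pi^2_p$. Hence $\Vu_p-\Pi^1_p\Vz_p\in\Kern{\curl}\cap\Cw^1_p(\mesh)$. Moreover the $p$-uniform interpolation error estimate, applied to the $\Honev$-bounded sequence $(\Vz_p)$, gives $\NLtwov{\Vz_p-\Pi^1_p\Vz_p}\to0$, whence $\Pi^1_p\Vz_p\to\Vz$ strongly in $\Ltwov$ as well. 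I expect this step to be the main obstacle---precisely the gap flagged in \cite[Remark~15]{HIP02}: one must secure both the commuting diagram and a vanishing interpolation error at the merely $H^1$ regularity delivered by the lifting, so the projection-based operators have to remain well defined and stable on $\Honev$.

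The proof is then closed by a norm-convergence argument. Because $\Vu_p-\Pi^1_p\Vz_p\in\Kern{\curl}\cap\Cw^1_p(\mesh)$ and $\Vu_p\in\Cx^1_p(\mesh)$, the defining orthogonality \eqref{eq:102} yields $\SPLtwo{\epsilonbf\Vu_p}{\Vu_p-\Pi^1_p\Vz_p}=0$, that is,
\[
  \SPLtwo{\epsilonbf\Vu_p}{\Vu_p}=\SPLtwo{\epsilonbf\Vu_p}{\Pi^1_p\Vz_p}\;.
\]
Along the subsequence the right-hand side pairs the weakly convergent factor $\Vu_p\rightharpoonup\Vu$ against the strongly convergent factor $\Pi^1_p\Vz_p\to\Vz$, so it converges to $\SPLtwo{\epsilonbf\Vu}{\Vz}$. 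Letting $p\to\infty$ in \eqref{eq:102} and using the asymptotic density of the scalar spaces $\Cw^0_p(\mesh)$ shows that $\Vu$ is $\epsilonbf$-weakly divergence free, while $\curl(\Vu-\Vz)=0$ by weak limits; since $\Vu$ is $\epsilonbf$-orthogonal to gradients (divergence-freeness) and to the finite-dimensional harmonic fields (the limit of \eqref{eq:102} applied to their discrete counterparts in $\Kern{\curl}\cap\Cw^1_p(\mesh)$), we conclude $\SPLtwo{\epsilonbf\Vu}{\Vu-\Vz}=0$, hence $\SPLtwo{\epsilonbf\Vu}{\Vz}=\SPLtwo{\epsilonbf\Vu}{\Vu}$. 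Thus $\SPLtwo{\epsilonbf\Vu_p}{\Vu_p}\to\SPLtwo{\epsilonbf\Vu}{\Vu}$, i.e.\ the norms converge in the inner product $\SPLtwo{\epsilonbf\cdot}{\cdot}$, which is equivalent to the $\Ltwov$ inner product. Together with $\Vu_p\rightharpoonup\Vu$ this forces strong convergence $\Vu_p\to\Vu$ in $\Ltwov$, establishing the discrete compactness property. The Dirichlet case posed in $\zbHcurl$ is handled identically, using the boundary-condition-respecting versions of the lifting and the interpolation operators.
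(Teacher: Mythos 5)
Your overall architecture is sound and close in spirit to the paper's: both reduce discrete compactness to the interpolation estimate of Lemma~\ref{lem:1} through the commuting projection $\Pi^{1}_{p}$, the difference being that you run a one-pass weak-limit/norm-convergence argument directly for general $\epsilonbf$, whereas the paper first settles the case $\epsilonbf\equiv 1$ (Thm.~\ref{thm:main}, via the continuous Helmholtz decomposition, Ned\'el\'ec's trick and Rellich's theorem) and then passes to general $\epsilonbf$ through a discrete Helmholtz decomposition and Strang's lemma. Your closing argument (discrete orthogonality, weak--strong pairing, identification of the limit through density of the discrete curl-kernel in $\Kern{\curl}\cap\Hcurl$) is correct, granted the same ingredients the paper needs at the corresponding point: density of the Lagrangian spaces for the gradient part, and Lemma~\ref{lem:reg} combined with Lemma~\ref{lem:1} to approximate the harmonic fields in $\Chv^{1}(\Omega)$ by discrete curl-free fields.

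The genuine gap is your very first step: you apply the smoothed Poincar\'e lifting \emph{globally} on $\Omega$ to produce potentials $\Vz_{p}\in\Honev$ with $\curl\Vz_{p}=\curl\Vu_{p}$ and uniform bounds. The operator $\Poinc$ of Sect.~\ref{sec:poincare-lifting} is only defined, and Thm.~\ref{thm:CMI} only holds, on domains that are star-shaped with respect to a ball; a general Lipschitz polyhedron is not star-shaped, and on a multiply connected $\Omega$ your literal claim (``divergence-free and $\Ltwov$-bounded, hence the lifting furnishes a potential'') cannot hold for \emph{any} operator, since there divergence-free fields need not be curls at all. This is precisely why the paper uses $\Poinc_{T}$ only cell by cell (inside the proof of Lemma~\ref{lem:1}) and obtains the global regular field by a different route: the continuous Helmholtz decomposition \eqref{eq:98}--\eqref{eq:99} together with the embedding Lemma~\ref{lem:reg}, which yields $\widetilde{\Vu}_{p}\in\Hmv{s}$ with some $s>\frac{1}{2}$ only --- not $\Honev$. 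Your step can be repaired either by invoking \cite{ABD96} for $\Honev$-regular potentials on Lipschitz domains (the required flux conditions hold because $\curl\Vu_{p}$ is itself a curl), or simply by substituting the paper's Helmholtz/regularity argument. Note, however, that in the Dirichlet case the potential must in addition satisfy $\Vz_{p}\times\Bn=0$ on $\partial\Omega$ --- otherwise $\Vu_{p}-\Pi^{1}_{p}\Vz_{p}\notin\zbHcurl$ and the orthogonality \eqref{eq:102} cannot be applied --- and such constrained potentials on a non-convex polyhedron are in general only $\Hmv{s}$-regular with $s<1$. So ``$\Honev$-stable'' must be weakened to ``$\Hmv{s}$-stable, $s>\frac{1}{2}$'', which, fortunately, is all that Lemma~\ref{lem:1} requires.
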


The idea of the proof is to inspect the $\Ltwo$-orthogonal Helmholtz
decomposition \cite[\S~I.3]{GIR86}
\begin{gather}
  \label{eq:134}
  \Vw_{p} = \widetilde{\Vw}_{p} \oplus_{L^{2}} \Vw_{p}^{0}\;,\quad \Vw_{p}^{0}\in
  \Kern{\curl} \;,
\end{gather}
whose so-called solenoidal components $\widetilde{\Vw}_{p}$ belong to 
$\Hcurl\cap\zbkHdiv$. The above mentioned compact embedding guarantees 
the existence of a subsequence of $\left(\widetilde{\Vw}_{p}\right)_{p\in\bbN}$
that converges in $\Ltwov$. Hence, it ``merely'' takes to show 
$\NLtwo{\widetilde{\Vw}_{p}-\Vw_{p}}\to 0$ for $p\to\infty$ in order to
establish discrete compactness. Clever use of projection operators that enjoy
a commuting diagram property, converts this task to a uniform interpolation
estimate. The core of this paper is devoted to this seemingly humble program.

\begin{remark}
  Generalizations of Thm.~\ref{thm:MAIN} to other families of tetrahedral edge
  elements, and corresponding $hp$-finite element schemes are straightforward
  \cite{BCD06}. For the sake of readability, these extensions will not be pursued in
  the present paper.
  
  Since the Poncar\'e map does not fit a tensor product structure, extending the 
  results of this paper to 3D hexahedral edge elements will take some new ideas.
\end{remark}

\section{Poincar\'e lifting}
\label{sec:poincare-lifting}

Let $D\subset \mathbb{R}^{3}$ stand for a bounded domain that is star-shaped
with respect to a subdomain $B\subset D$, that is,
\begin{gather}
  \label{eq:1}
  \forall \Ba\in B,\,\Bx\in D:\quad 
  \{t \Ba + (1-t)\Bx,\; 0<t<1\} \subset D\;.
\end{gather}

\begin{definition}
  \label{def:pl}
  The \emph{Poincar\'e lifting}\footnote{Bold symbols will generally be used to tag
    vector valued functions and spaces of such.}
  $\Poinc_{\Ba}:\boldsymbol{C}^{0}(\overline{\Omega})\mapsto
  \boldsymbol{C}^{0}(\overline{\Omega})$, $\Ba\in B$, is defined as
  \begin{gather}
    \label{eq:pl}
    \Poinc_{\Ba}(\Vu)(\Bx) := \int\nolimits_{0}^{1} t\Vu(\Bx+t(\Bx-\Ba))\,\mathrm{d}t
    \times(\Bx-\Ba)\;,\quad \Bx\in D\;,
  \end{gather}
  where $\times$ designates the cross product of two vectors in $\mathbb{R}^{3}$.
\end{definition}

This is a special case of the generalized path integral formula for differential
forms, which is instrumental in proving the exactness of closed forms on
star-shaped domains, the so-called ``Poincar\'e lemma'', see \cite[Sect.~2.13]{CAR67}.

The linear mapping $\Poinc_{\Ba}$ provides a right inverse of the $\curl$-operator 
on divergence-free vectorfields, see \cite[Prop.~2.1]{GOD03} for the simple proof, and 
\cite[Sect.~2.13]{CAR67} for a general proof based on differential forms. 

\begin{lemma}
  \label{lem:lift}
  If $\Div\Vu=0$, then, for any $\Ba\in B$, $\curl\Poinc_{\Ba}\Vu = \Vu$ for all
  $\Vu\in \boldsymbol{C}^{1}(\overline{\Omega})$.
\end{lemma}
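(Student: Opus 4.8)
The plan is to verify the identity by differentiating the defining integral \eqref{eq:pl} directly, the conceptual backdrop being the calculus of differential forms alluded to after Definition~\ref{def:pl}. Identifying $\Vu$ with the $2$-form $\omega_{\Vu}$, the hypothesis $\Div\Vu=0$ says precisely that $\omega_{\Vu}$ is closed, while $\Poinc_{\Ba}$ is (the vector proxy of) the homotopy operator $K$ that contracts along the rays emanating from $\Ba$. Cartan's homotopy formula $\mathrm{d}K+K\mathrm{d}=\mathrm{Id}$ on forms of positive degree then collapses to $\mathrm{d}(K\omega_{\Vu})=\omega_{\Vu}$, i.e. $\curl\Poinc_{\Ba}\Vu=\Vu$. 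To keep the argument elementary and self-contained I would not invoke the exterior calculus but instead differentiate under the integral sign, which is legitimate since $\Vu\in\boldsymbol{C}^{1}(\overline{\Omega})$ and the whole ray $\{\Ba+t(\Bx-\Ba):0\le t\le1\}$ stays in $D$ by the star-shapedness \eqref{eq:1}.

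First I would translate the centre to the origin: setting $\Br:=\Bx-\Ba$ and using that $\curl$ commutes with the shift $\Bx\mapsto\Bx-\Ba$, there is no loss in taking $\Ba=\mathbf{0}$, so that the cross-product factor in \eqref{eq:pl} becomes simply $\Bx$. Writing $\Vg(\Bx):=\int_{0}^{1}t\,\Vu(t\Bx)\,\mathrm{d}t$ for the integral part, one has $\Poinc_{\Ba}\Vu=\Vg\times\Bx$, and I would apply the standard identity $\curl(\Vg\times\Bx)=\Vg\,\Div\Bx-\Bx\,\Div\Vg+(\Bx\cdot\nabla)\Vg-(\Vg\cdot\nabla)\Bx$. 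Two of the four terms simplify at once: $\Div\Bx=3$ and $(\Vg\cdot\nabla)\Bx=\Vg$, so the right-hand side reduces to $2\Vg-\Bx\,\Div\Vg+(\Bx\cdot\nabla)\Vg$.

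The two remaining terms are treated by pulling differentiation inside the integral. The chain rule gives $\Div\Vg(\Bx)=\int_{0}^{1}t^{2}(\Div\Vu)(t\Bx)\,\mathrm{d}t$, which vanishes by the hypothesis $\Div\Vu=0$ and kills the $\Bx\,\Div\Vg$ contribution. The crux is the directional term, and here the key observation is that the radial derivative along the homotopy ray is a total derivative in the parameter, $(\Bx\cdot\nabla)\Vu(t\Bx)=\tfrac{\mathrm{d}}{\mathrm{d}t}\Vu(t\Bx)$, so that $(\Bx\cdot\nabla)\Vg(\Bx)=\int_{0}^{1}t^{2}\tfrac{\mathrm{d}}{\mathrm{d}t}\Vu(t\Bx)\,\mathrm{d}t$. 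An integration by parts yields the boundary term $[t^{2}\Vu(t\Bx)]_{0}^{1}=\Vu(\Bx)$ together with the interior term $-\int_{0}^{1}2t\,\Vu(t\Bx)\,\mathrm{d}t=-2\Vg$, whence $(\Bx\cdot\nabla)\Vg=\Vu-2\Vg$. Substituting back gives $\curl\Poinc_{\Ba}\Vu=2\Vg+(\Vu-2\Vg)=\Vu$, as claimed.

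The computation is routine once organised this way, and I would regard the single substantive step to be the recognition of the radial derivative as a total $t$-derivative: this is exactly what makes the integration by parts manufacture the boundary value $\Vu(\Bx)$ while the spurious bulk term cancels against $2\Vg$. The only regularity point to check is the admissibility of differentiating under the integral sign, which is immediate from $\Vu\in\boldsymbol{C}^{1}$ and the compactness of $[0,1]$ and of the image of the homotopy.
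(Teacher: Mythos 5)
Your proof is correct, and it is genuinely more self-contained than what the paper offers: the paper does not prove Lemma~\ref{lem:lift} at all, but delegates it to \cite[Prop.~2.1]{GOD03} for ``the simple proof'' and to \cite[Sect.~2.13]{CAR67} for the general differential-forms argument. Your computation is in effect the vector-proxy specialization of exactly that Cartan homotopy-formula proof: the identity $\curl(\Vg\times\Bx)=\Vg\,\Div\Bx-\Bx\,\Div\Vg+(\Bx\cdot\nabla)\Vg-(\Vg\cdot\nabla)\Bx$, the vanishing of $\Div\Vg$ under the hypothesis $\Div\Vu=0$ (this is where closedness of the associated $2$-form enters), and the recognition of $(\Bx\cdot\nabla)\Vu(t\Bx)$ as the total derivative $\frac{\mathrm{d}}{\mathrm{d}t}\Vu(t\Bx)$, so that integration by parts produces the boundary value $\Vu(\Bx)$ and a bulk term cancelling $2\Vg$ --- all steps check out, including the chain-rule factor $t$ that turns the weight $t$ into $t^{2}$. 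What your route buys is a proof readable without leaving the paper and valid with exactly the stated $\boldsymbol{C}^{1}$ regularity; what the paper's route buys is generality (the argument in \cite{CAR67} covers forms of any degree in any dimension, which is what underlies the representation \eqref{eq:6} and its analogues \eqref{eq:106}, \eqref{eq:107}). One point worth making explicit: you tacitly read \eqref{eq:pl} with the argument $\Vu(\Ba+t(\Bx-\Ba))$, whereas the paper as printed has $\Vu(\Bx+t(\Bx-\Ba))$; your reading is the correct one (the printed version is a typo, as one sees from the substitution \eqref{eq:103} and the companion formulas \eqref{eq:16} and \eqref{eq:107}, and as a test with the linear divergence-free field $\Vu(\Bx)=(x_{2},0,0)$ confirms), but since your proof hinges on the boundary term $\bigl[t^{2}\Vu(t\Bx)\bigr]_{0}^{1}=\Vu(\Bx)$, you should state that correction rather than leave it implicit.
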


Unfortunately, the mapping $\Poinc_{\Ba}$ cannot be extended to a continuous mapping
$\Ltwov[D]\mapsto \Honev[D]$, \textit{cf.} \cite[Thm.~2.1]{GOD03}. As discovered in
the breakthrough paper \cite{CMI08} based on earlier work of Bogovski\v{i}
\cite{BOG79}, it takes a smoothed version to accomplish this: we introduce the
\emph{smoothed Poincar\'e lifting} \footnote{The dependence of $\Poinc$ on $\Phi$ is
  dropped from the notation.}
\begin{gather}
  \label{eq:2}
  \Poinc(\Vu) := \int\nolimits_{B} \Phi(\Ba)\Poinc_{\Ba}(\Vu)\,\mathrm{d}\Ba\;,
\end{gather}
where 
\begin{gather}
  \label{eq:3}
  \Phi \in C^{\infty}(\mathbb{R}^{3})\;,\quad
  \operatorname{supp}\Phi \subset B\;,\quad
  \int\nolimits_{B}\Phi(\Ba)\,\mathrm{d}\Ba = 1\;.
\end{gather}
The substitution
\begin{gather}
  \label{eq:103}
  \By := \Ba + t(\Bx-\Ba)\quad,\quad
  \tau := \frac{1}{1-t}\;,
\end{gather}
transforms the integral \eqref{eq:3} into
\begin{gather}
  \label{eq:104}
  \begin{aligned}
  \Poinc(\Vu)(\Bx) & = \int\limits_{\mathbb{R}^{3}}\int\limits_{1}^{\infty}
  \tau(1-\tau)\Vu(\By)\times(\Bx-\By)\Phi(\By+\tau(\By-\Bx))\,\mathrm{d}\tau\mathrm{d}\By \\
  & = \int\limits_{\mathbb{R}^{3}} \Vk(\Bx,\By-\Bx)\times \Vu(\By)\,\mathrm{d}\By\;,
  \end{aligned}
\end{gather}
that is, $\Poinc$ is a convolution-type integral operator with kernel
\begin{gather}
  \label{eq:105}
  \begin{aligned}
    \Vk(\Bx,\Bz) & = \int\nolimits_{1}^{\infty}\tau(1+\tau)\Phi(\Bx+\tau
    \Bz)\Bz\,\mathrm{d}\tau \\
    & = \frac{\Bz}{|\Bz|^{2}} \int\nolimits_{1}^{\infty}
    \zeta\Phi(\Bx+\zeta\frac{\Bz}{|\Bz|})\,\mathrm{d}\zeta + \frac{\Bz}{|\Bz|^{3}}
    \int\nolimits_{1}^{\infty}\zeta^{2}\Phi(\Bx+\zeta \frac{\Bz}{|\Bz|})\,\mathrm{d}\zeta\;.
  \end{aligned}
\end{gather}
The kernel can be bounded by $|\Vk(\Bx,\Bz)| \leq K(\Bx)|\Bz|^{-2}$, where $K\in
C^{\infty}(\bbR^{3})$ depends only on $\Phi$ and is locally uniformly bounded. 
As a consequence, \eqref{eq:104} exists as an improper integral. 

The intricate but elementary analysis of \cite[Sect.~3.3]{CMI08} further shows, that
$\Vk$ belongs to the H\"ormander symbol class $S^{-1}_{1,0}(\bbR^{3})$, see
\cite[Ch.~7]{TAY96y}. Invoking the theory of pseudo-differential operators
\cite[Prop.~5.5]{TAY96y} we obtain the following following continuity result, which
is a special case of \cite[Cor.~3.4]{CMI08}

\begin{theorem}
  \label{thm:CMI}
  The mapping $\Poinc$ can be extended to a \emph{continuous} linear operator
  $\Ltwov[D]\mapsto\Honev[D]$, which is still denoted by $\Poinc$. It 
  satisfies
  \begin{gather}
    \label{eq:cmi}
    \curl\Poinc\Vu = \Vu\quad \forall \Vu\in\kHdiv[D]\;.
  \end{gather}
\end{theorem}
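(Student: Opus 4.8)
The plan is to prove the two assertions separately: first the mapping property $\Poinc:\Ltwov[D]\to\Honev[D]$, and then the right-inverse identity \eqref{eq:cmi}, the latter by reducing to Lemma~\ref{lem:lift} through a density argument. Since the genuinely hard analytic work has already been quoted in the text, my proof is essentially an assembly of the two pieces that precede the statement.

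For the continuity I would lean entirely on the pseudo-differential machinery announced in the excerpt. Writing $\Poinc$ in the convolution form \eqref{eq:104}, the matrix-valued operator $\Vu\mapsto\int \Vk(\Bx,\By-\Bx)\times\Vu(\By)\,\mathrm{d}\By$ has, by the analysis of \cite[Sect.~3.3]{CMI08}, a symbol lying in the H\"ormander class $S^{-1}_{1,0}(\bbR^{3})$; informally, the weakly singular bound $|\Vk(\Bx,\Bz)|\le K(\Bx)|\Bz|^{-2}$ translates into decay of order $|\xi|^{-1}$ after a partial Fourier transform in the second slot. Operators of order $-1$ map $H^{s}$ continuously into $H^{s+1}$ by \cite[Prop.~5.5]{TAY96y}; taking $s=0$ yields the bounded extension $\Ltwov[D]\to\Honev[D]$. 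I would realize $\Poinc$ on $D$ as the restriction to $D$ of the global $\bbR^{3}$-operator applied to the zero-extension of $\Vu$, the support condition $\supp\Phi\subset B\subset D$ in \eqref{eq:3} confining the $\Bx$-dependence of $K$ so that the global estimate localizes to $D$.

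To obtain \eqref{eq:cmi} I would first treat smooth solenoidal fields. For $\Vu\in\boldsymbol{C}^{1}(\overline{D})$ with $\Div\Vu=0$, Lemma~\ref{lem:lift} gives $\curl\Poinc_{\Ba}\Vu=\Vu$ for every $\Ba\in B$. Differentiating the average \eqref{eq:2} under the integral sign---legitimate because the integrand depends smoothly on $\Ba$ over the compact set $\supp\Phi$---and using the normalization $\int_{B}\Phi\,\mathrm{d}\Ba=1$ from \eqref{eq:3} yields
\begin{gather*}
  \curl\Poinc\Vu = \int\nolimits_{B}\Phi(\Ba)\,\curl\Poinc_{\Ba}\Vu\,\mathrm{d}\Ba
  = \Vu\int\nolimits_{B}\Phi(\Ba)\,\mathrm{d}\Ba = \Vu\;.
\end{gather*}
This settles \eqref{eq:cmi} on the class of smooth divergence-free fields.

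The extension to all of $\kHdiv[D]$ then follows by continuity: both $\Vu\mapsto\curl\Poinc\Vu$ (bounded $\Ltwov[D]\to\Ltwov[D]$ by the first part together with the boundedness of $\curl:\Honev[D]\to\Ltwov[D]$) and the identity map are continuous on $\kHdiv[D]$ in the $\Ltwov[D]$-topology, so agreement on a dense subset forces agreement everywhere. The step I expect to require the most care in my own argument is the density itself: given $\Vu\in\kHdiv[D]$, I would exploit the star-shapedness of $D$ to rescale $\Vu$ toward the star-center, gaining a field that is solenoidal on a neighborhood of $\overline{D}$, and then mollify, which preserves $\Div=0$; convergence in $\Ltwov[D]$ is routine. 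The decisive analytic obstacle---verifying the $S^{-1}_{1,0}$ symbol estimates---is not reproduced here but borrowed wholesale from \cite{CMI08}, so that the theorem indeed appears as a special case of \cite[Cor.~3.4]{CMI08}.
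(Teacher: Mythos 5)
Your proposal is correct, and for the analytically substantive part---the $\Ltwov[D]\mapsto\Honev[D]$ continuity---it is the same argument as the paper's: both reduce the claim to the convolution representation \eqref{eq:104}--\eqref{eq:105}, quote the symbol estimates of \cite[Sect.~3.3]{CMI08} placing the operator in the class $S^{-1}_{1,0}(\bbR^{3})$, and invoke \cite[Prop.~5.5]{TAY96y}; neither you nor the paper reproves those symbol bounds. Where you genuinely add something is the identity \eqref{eq:cmi}: the paper simply absorbs it into the citation of \cite[Cor.~3.4]{CMI08}, whereas you derive it from the paper's own Lemma~\ref{lem:lift} by differentiating under the $\Ba$-integral in \eqref{eq:2} and using the normalization \eqref{eq:3}, and then pass to all of $\kHdiv[D]$ by a density argument. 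Both of your steps are sound: the interchange of $\curl$ with the $\Ba$-average is legitimate for $C^{1}$ fields because $\Phi$ is smooth with compact support in $B$, and the dilation-toward-the-star-center-plus-mollification construction does yield smooth solenoidal approximants, since star-shapedness with respect to the \emph{open} set $B$ ensures the contracted field is divergence free on a neighbourhood of $\overline{D}$, while $\curl\Poinc$ and the identity are both $\Ltwov[D]$-continuous on $\kHdiv[D]$, so agreement on the dense subset suffices. The net effect is a variant that depends on \cite{CMI08} only for the continuity statement and not for the right-inverse property, i.e.\ slightly more self-contained than the paper's pure citation.
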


The smoothed Poincar\'e lifting shares this continuity property with many other
mappings, see \cite[Sect.~2.4]{HIP02}. Yet, it enjoys another essential feature,
which is immediate from its definition \eqref{eq:pl}: $\Poinc$ maps polynomials
of degree $p$ to other polynomials of degree $\leq p+1$. The next section will
highlight the significance of this observation.

\section{Tetrahedral edge elements}
\label{sec:edge-elements}

In \cite{NED80} Ned\'el\'ec introduced a family of $\Hcurl$-conforming, that is,
tangentially continuous, finite element spaces. On a tetrahedral triangulation
$\mesh$ of $\Omega$, the corresponding finite element spaces of degree $p$ are given
by
\begin{align*}
  \Cw^{1}_{p}(\mesh) & 
  := \{\Vv\in\Hcurl:\; {\Vv}_{|T}
  \in \Cw_{p}^{1}(T)\;\forall T\in\mesh\}\;,\\
  \Cw^{1}_{p}(T) & 
  := \{\Vv\in \boldsymbol{C}^{\infty}(T):\;\Vv(\Bx) = \Vp(\Bx) + \Vq(\Bx)\times 
  \Bx,\; \Vp,\Vq \in\Cpv_{p}(\bbR^{3}),\;\Bx\in T\}\;.
\end{align*}
We wrote $\mathcal{P}_{p}(\bbR^{3})$ for the space of 3-variate polynomials of total
degree $\leq p$, $p\in\mathbb{N}_{0}$, and the bold symbol $\Cpv_{p}(\bbR^{3})$ for
vectorfields with three components in $\mathcal{P}_{p}(\bbR^{3})$. To emphasize that
polynomials on a tetrahedron $T$ are being considered, we may use the notations
$\Cp_{p}(T)$/$\Cpv_{p}(T)$ instead of $\Cp_{p}(\bbR^{3})$/$\Cpv(\bbR^{3})$. We also
adopt the convention that $\Cp_{p}(\bbR^{3}) = \{0\}$, if $p<0$. Another relevant
polynomial space is
\begin{gather}
  \label{eq:71}
  {\boldsymbol\Cp}_{p}(\Div0,\bbR^{3}) := \{\Vq\in \Cpv_{p}(\bbR^{3}):\;\Div\Vq=0\}\;.
\end{gather}%
Deep insights can be gained by regarding edge elements as discrete 1-forms. This
provides a very elegant construction of higher order edge element spaces and
immediately reveals their relationships with standard Lagrangian finite elements and
$\Hdiv$-conforming face elements (see below). In particular, the Poincar\'e lifting
becomes a powerful tool for building discrete differential forms of high polynomial
degree. This is explored in \cite{HIP96b,HIP01a}, \cite[Sect.~3.4]{HIP02}, and
\cite[Sect.~1.4]{AFW06} in arbitrary dimension, using the calculus of differential
forms. In this article we prefer to stick to the classical calculus of vector
analysis, because we are only concerned with 3D. We hope, that, thus, the
presentation will be more accessible to an audience of numerical analysts. Yet, the
differential forms background has inspired our notations: integer superscripts
label spaces and operators related to differential forms. For instance,
$\Cw^{1}_{p}(\mesh)$ 
can be read as a space of discrete 1-forms.

According to \cite[Sect.~3]{HIP96b}, for any $T\in\mesh$, $\Ba\in T$, we can obtain
the local space as
\begin{gather}
  \label{eq:6}
  \Cw^{1}_{p}(T) = \Cpv_{p}(\bbR^{3}) + 
  \Poinc_{\Ba}\bigl({\boldsymbol\Cp}_{p}(\Div0,\bbR^{3})\bigr)\;.
\end{gather}
Independence of $\Ba$ is discussed in \cite[Sect.~3]{HIP96b}. The representation
\eqref{eq:6} can be established by dimensional arguments: from the formula
\eqref{eq:pl} for the Poincar\'e lifting we immediately see that $ \Cpv_{p}(\bbR^{3}) +
\Poinc_{\Ba}(\Cpv_{p}(\bbR^{3})) \subset \Cw^{1}_{p}(T)$. In addition, from
\cite[Lemma~4]{NED80} and \cite[Thm.~6, case $l=1$, $n=3$]{HIP96b} we learn that the
dimensions of both spaces agree and are equal to
\begin{gather}
  \label{eq:9}
  \dim \Cw^{1}_{p}(T) = \tfrac{1}{2}(1+p)(3+p)(4+p)\;.
\end{gather}
As a consequence, the two finite dimensional spaces must agree. 

For the remainder of this section, which focuses on local spaces, we single out a
tetrahedron $T\in\mesh$.  On $T$ we can introduce a smoothed Poincar\'e lifting
$\Poinc_{T}$ according to \eqref{eq:2} with $B=T$ and a suitable $\Phi\in
C^{\infty}_{0}(T)$ complying with \eqref{eq:3}. An immediate consequence of
\eqref{eq:6} is that
\begin{gather}
  \label{eq:72}
  \Poinc_{T}\bigl({\boldsymbol\Cp}_{p}(\Div0,\bbR^{3})\bigr) \subset \Cw^{1}_{p}(T)\;.
\end{gather}

We introduce the notation $\Cf_{m}(T)$ for the set of all $m$-dimensional facets of
$T$, $m=0,1,2,3$.  Hence, $\Cf_{0}(T)$ contains the vertices of $T$, $\Cf_{1}(T)$ the
edges, $\Cf_{2}(T)$ the faces, and $\Cf_{3}(T) = \{T\}$. Moreover, for some $F\in
\Cf_{m}(T)$, $m=1,2,3$, $\Cp_{p}(F)$ denotes the space of $m$-variate polynomials of
total degree $\leq p$ in a local coordinate system of the facet $F$, and
$\Cpv_{p}(F)$ will designate corresponding tangential polynomial vectorfields.
Further, we write
\begin{gather}
  \label{eq:12}
  \Cw^{1}_{p}(e) = \Cw^{1}_{p}(T)\cdot \Bt_{e}\;,\quad 
  \Bt_{e}\text{ the unit tangent vector of }e,\;e\in\Cf_{1}(T)\;,\\
  \label{eq:13}
  \Cw^{1}_{p}(f) = \Cw^{1}_{p}(T)\times \Bn_{f}\;,\quad
  \Bn_{f}\text{ the unit normal vector of }f,\;f\in\Cf_{2}(T)\;,
\end{gather}
for the tangential traces of local edge element vectorfields onto edges and faces.
Simple vector analytic manipulations permit us to deduce from \eqref{eq:6} that
\begin{align}
  \label{eq:14}
  \Cw_{p}^{1}(e) & = \Cp_{p}(e)\;,\quad e\in\Cf_{1}(T)\;,\\
  \label{eq:15}
  \Cw_{p}^{1}(f) & = \Cpv_{p}(f) + \Poinc_{\Ba}^{2D}(\Cp_{p}(f))\;,\quad
  \Ba\in f\;,\quad f\in\Cf_{2}(T)\;,
\end{align}
where the projection $\Poinc_{\Ba}^{2D}$ of the Poincar\'e lifting in the plane
reads
\begin{gather}
  \label{eq:16}
  \Poinc_{\Ba}^{2D}(u)(\Bx) := \int\nolimits_{0}^{1} t
  u(\Ba+t(\Bx-\Ba)](\Bx-\Ba)\,\mathrm{d}t\;,
  \quad \Ba\in\bbR^{2}\;.
\end{gather}
It satisfies $\bDiv\Poinc_{\Ba}^{2D}(u) = u$ for all $u\in C^{\infty}(\bbR^{2})$.  We
point out that, along with \eqref{eq:6}, the formulas \eqref{eq:14} and \eqref{eq:15}
are special versions of the general representation formula for discrete 1-forms, see
\cite[Formula~(16)]{HIP96b}. Special facet tangential trace spaces will also be
needed:
\begin{align}
  \label{eq:18}
  \smash{\smash{\overset{\circ}{\Cw}}}_{p}^{1}(e) & := \{u\in \Cw^{1}_{p}(e):\;
  \int\nolimits_{e}u\,\mathrm{d}l = 0 \}\;,\quad
  e\in \Cf_{1}(T)\;,\\[1.5ex]
  \label{eq:136}
  \smash{\overset{\circ}{\Cw}}_{p}^{1}(f) & := 
  \{\Vu\in \Cw^{1}_{p}(f):\; \Vu\cdot\Bn_{e,f} \equiv 0\;\forall e\in
  \Cf_{1}(T),\;e\subset\partial{f}\}\;,\quad f\in\Cf_{2}(T)\;,\\[1.5ex]
  \label{eq:137}
  \smash{\overset{\circ}{\Cw}}_{p}^{1}(T) & := 
  \{\Vu\in \Cw^{1}_{p}(T):\; \Vu\times\Bn_{f} \equiv 0\;\forall f\in
  \Cf_{2}(T)\}\;.
\end{align}%
Here $\Bn_{f}$ represents an exterior face unit normal of $T$, $\Bn_{e,f}$ the in
plane normal of a face w.r.t. an edge $e\subset\partial f$.

According to \cite[Sect.~1.2]{NED80} and \cite[Sect.~4]{HIP96b}, the local degrees of
freedom for $\Cw^{1}_{p}(T)$ are given by the first $p-2$ vectorial moments on the
cells of $\mesh$, the first $p-1$ vectorial moments of the tangential components on
the faces of $\mesh$ and the first $p$ tangential moments along the edges of $T$, see
\eqref{PRNDdof} for concrete formulas. Then the set $\mathrm{dof}_{p}^{1}(T)$ can be
partitioned as
\begin{gather}
  \label{eq:11}
  \mathrm{dof}_{p}^{1}(T) = \bigcup\limits_{e\in\Cf_{1}(T)} \mathrm{ldf}_{p}^{1}(e)\;
  \cup \bigcup\limits_{f\in\Cf_{2}(T)} \mathrm{ldf}_{p}^{1}(f) \;\cup\;
  \mathrm{ldf}_{p}^{1}({T})\;,
\end{gather}
where the functionals in $\mathrm{ldf}_{p}^{1}(e)$, $\mathrm{ldf}_{p}^{1}(f)$, and
$\mathrm{ldf}_{p}^{1}({T})$ are supported on an edge, face,
and $T$, respectively, and read
\begin{gather}
  \label{PRNDdof}
  \renewcommand{\arraystretch}{1.3}
  \begin{array}{rcll}
    \kappa\in \mathrm{ldf}_{p}^{1}(e) & \Rightarrow & \kappa(\Vu) =
    \int\nolimits_{{e}}p {\xibf}\cdot{\Bt}_{e}\,\mathrm{d}l &
    \text{ for }{e}\in\Cf_1({T}),
    \text{ suitable }p\in\Cp_{p}(e)\;, \\
    \kappa\in \mathrm{ldf}_{p}^{1}(f) & \Rightarrow &\kappa(\Vu)=
    \int\nolimits_{{f}}{\Vp}\cdot({\xibf\times\Vn})\,\mathrm{d}S &
    \text{ for }{f}\in\Cf_2({T}),
    \text{ suitable }\Vp\in\Cpv_{p-1}(f)\;, \\
    \kappa\in \mathrm{ldf}_{p}^{1}({T}) & \Rightarrow &\kappa(\Vu):= 
    \int\nolimits_{{T}}{\Vp}\cdot{\xibf}\,\mathrm{d}\Bx&
    \text{for suitable }\Vp\in\Cpv_{p-2}(T)\;.
  \end{array}
\end{gather}
These functionals are unisolvent on $\Cw_{p}^{1}(T)$ and locally fix the tangential
trace of $\Vu\in \Cw^{1}_{p}(T)$. There is a splitting of $\Cw_{p}^{1}(T)$ dual to 
\eqref{eq:11}: Defining
\begin{gather}
  \label{eq:17}
  \Cy^{1}_{p}(F) :=  \{\Vv\in \Cw^{1}(T):\; \kappa(\Vv) = 0\;\forall 
  \kappa\in \mathrm{dof}_{p}^{1}(T)\setminus\mathrm{ldf}_{p}^{1}(F)\}
\end{gather}
for $F\in\Cf_{m}(T)$, $m=1,2,3$, we find the direct sum decomposition
\begin{gather}
  \label{eq:10}
  \Cw_{p}^{1}(T) = \sum\limits_{m=1}^{3} \sum\limits_{F\in\Cf_{m}(T)} \Cy_{p}^{1}(F)\;.
\end{gather}
In addition, note that the tangential trace of $\Vu\in\Cx_{p}^{1}(F)$ vanishes
on all facets $\not= F$, whose dimension is smaller or equal the dimension of
$F$. By the unisolvence of $\mathrm{dof}_{p}^{1}(T)$, there are bijective linear
\emph{extension operators}
\begin{align}
  \label{eq:19}
  \mathsf{E}^{1}_{e,p} & :\Cw^{1}_{p}(e)\mapsto \Cy^{1}_{p}(e)\;,\quad e\in\Cf_{1}(T)\;,\\
  \mathsf{E}^{1}_{f,p} & :\smash{\overset{\circ}{\Cw}}^{1}_{p}(f)\mapsto\Cy^{1}_{p}(f)\;,\quad
  f\in\Cf_{2}(T)\;.
\end{align}

The $\curl$ connects the edge element spaces $\Cw^{1}_{p}(\mesh)$ and the so-called
face element spaces of discrete 2-forms \cite[Sect.~1.3]{NED80}
\begin{align*}
  \Cw^{2}_{p}(\mesh) & := \{\Vv\in\Hdiv:\;{\Vv}_{|T} \in \Cw^{2}_{p}(T)\;\forall T\in\mesh\}\;,\\
  \Cw^{2}_{p}(T) & := \{\Vv\in\boldsymbol{C}^{\infty}(T):\,
  \Vv(\Bx)=\Vp(\Bx)+q(\Bx)\Bx,\;\Vp\in\Cpv_{p}(T),\,q\in\Cp_{p}(T)\}\;.
\end{align*}
An alternative representation of the local face element space is 
\cite[Formula (16) for $l=2$, $n=3$]{HIP96b}
\begin{gather}
  \label{eq:106}
  \Cw^{2}_{p}(T) = \Cpv_{p}(T) + \mathsf{D}_{\Ba}(\Cp_{p}(T))\;,
\end{gather}
where the appropriate version of the Poincar\'e lifting reads
\begin{gather}
  \label{eq:107}
  (\mathsf{D}_{\Ba}u)(\Bx) := \int\nolimits_{0}^{1}t^{2}
  u(\Ba+t(\Bx-\Ba))(\Bx-\Ba)\,\mathrm{d}t
  \;,\quad \Ba\in T\;.
\end{gather}
Like \eqref{eq:6} this is a special incarnation of the general formula (16) 
in \cite{HIP96b}. Again, dimensional arguments based on \cite[Sect.~1.3]{NED80}
and \cite[Thm.~6]{HIP96b} confirm the representation \eqref{eq:107}. We
remark that $\Div\mathsf{D}_{\Ba}u = u$, see \cite[Prop.~1.2]{GOD03}.

The normal trace space of $\Cw^{2}_{p}(T)$ onto a face is
\begin{gather}
  \label{eq:116}
  \Cw^{2}_{p}(f) := \Cw_{p}^{2}(T)\cdot\Bn_{f} = \Cp_{p}(f)\;,\quad f\in\Cf_{2}(T)\;,
\end{gather}
and as relevant space ``with zero trace'' we are going to need
\begin{align}
  \label{eq:117}
  \smash{\overset{\circ}{\Cw}}_{p}^{2}(f) & := \{u\in\Cw^{2}_{p}(f):\int\nolimits_{f} 
  u\,\mathrm{d}S = 0\}\;,\quad f\in\Cf_{2}(T)\;,\\
  \smash{\overset{\circ}{\Cw}}_{p}^{2}(T) & := 
  \{\Vu\in\Cw_{p}^{2}(T):\; \Vu\cdot\Bn_{\partial T}=0\}\;.
\end{align}

The connection between the local spaces $\Cw^{1}_{p}(T)$, $\Cw^{2}_{p}(T)$ and
full polynomial spaces is established through a local discrete DeRham exact
sequence: To elucidate the relationship between differential operators
and various traces onto faces and edges, we also include those in the statement
of the following theorem. There $\Bn_{f}$ stands for an exterior face unit normal
of $T$, $\Bn_{e,f}$ for the in plane normal of a face w.r.t. an edge
$e\subset\partial f$, and $\frac{d}{dl}$ is the differentiation w.r.t. arclength
on an edge. 

\begin{theorem}
  \label{thm:locDR}
  For $f\in\Cf_{2}(T)$, $e\in\Cf_{1}(T)$, $e\subset\partial f$, all the sequences
  in 
  \begin{gather*}
    \begin{CD}
      \mathrm{const} @>{\mathsf{Id}}>> \Cp_{p+1}(T) @>{\grad}>> \Cw^{1}_{p}(T) 
      @>{\curl}>> \Cw^{2}_{p}(T) @>{\Div}>> \Cp_{p}(T) @>{\mathsf{Id}}>> \{0\} \\
      & & @V{{.}_{|f}}VV @V{{.\times\Bn_{f}}_{|f}}VV @VV{{.\cdot\Bn_{f}}_{|f}}V \\
      \mathrm{const} @>{\mathsf{Id}}>>
      \Cp_{p+1}(f) @>{\bcurl}>> \Cw^{1}_{p}(f) @>{\bDiv}>> \Cp_{p}(f)
      @>{\mathsf{Id}}>> \{0\}\\
      & & @V{{.}_{|e}}VV @V{{.\cdot\Bn_{e,f}}_{|e}}VV  \\
      \mathrm{const} @>{\mathsf{Id}}>> \Cp_{p+1}(e) @>{\frac{d}{dl}}>> \Cp_{p}(e) 
      @>{\mathsf{Id}}>> \{0\}
    \end{CD}
  \end{gather*}
  are exact and the diagram commutes. 
\end{theorem}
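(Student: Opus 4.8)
The plan is to read the three horizontal lines as the polynomial de~Rham complexes attached to the cell $T$, to a face $f$, and to an edge $e$, and to view the vertical arrows as the trace maps induced by the inclusions $e\subset f\subset T$. From the differential-forms point of view the assertion would be immediate: each row is the polynomial Poincar\'e lemma on a contractible cell, and the exterior derivative commutes with pullback under inclusions. Since the paper stays within classical vector analysis, I would instead establish exactness and commutativity separately, using the Poincar\'e liftings of Sect.~\ref{sec:poincare-lifting} and Sect.~\ref{sec:edge-elements} as the explicit homotopy and right-inverse operators.

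For the top row, the complex property is just $\curl\grad=0$ and $\Div\curl=0$, while the inclusions $\grad\Cp_{p+1}(T)\subset\Cpv_{p}(\bbR^{3})\subset\Cw^{1}_{p}(T)$, $\curl\Cw^{1}_{p}(T)\subset\Cw^{2}_{p}(T)$ and $\Div\Cw^{2}_{p}(T)\subset\Cp_{p}(T)$ are read off the definitions and \eqref{eq:6}. Exactness at the two ends is elementary: $\Kern{\grad}=\mathrm{const}$ on $\Cp_{p+1}(T)$, and $\Div$ is onto because $\mathsf{D}_{\Ba}$ is a polynomial right inverse with $\Div\mathsf{D}_{\Ba}=\Id$ and $\mathsf{D}_{\Ba}(\Cp_{p}(T))\subset\Cw^{2}_{p}(T)$ by \eqref{eq:106}. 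At $\Cw^{2}_{p}(T)$ I would first note $\Kern{\Div}\cap\Cw^{2}_{p}(T)={\boldsymbol\Cp}_{p}(\Div0,\bbR^{3})$---writing an element as $\Vp+\mathsf{D}_{\Ba}q$ and imposing $\Div=0$ forces $q=-\Div\Vp$, so the field lies in $\Cpv_{p}(\bbR^{3})$---and then Lemma~\ref{lem:lift} together with \eqref{eq:72} shows that every such field equals $\curl\Poinc_{\Ba}\Vu$ with $\Poinc_{\Ba}\Vu\in\Cw^{1}_{p}(T)$. Exactness at the remaining node $\Cw^{1}_{p}(T)$ I would then obtain from a dimension count: all other nodes being exact, the vanishing of the alternating sum of dimensions---computed from \eqref{eq:9} and the analogous formulae of \cite{NED80,HIP96b}---forces exactness there as well. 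The face row is treated by the identical recipe in two dimensions, with $\Poinc_{\Ba}^{2D}$ a right inverse of $\bDiv$ (so that $\bDiv$ is onto) by \eqref{eq:15}, and the edge row $\Cp_{p+1}(e)\xrightarrow{d/dl}\Cp_{p}(e)$ is trivially exact.

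For the commutativity, each square asserts that a trace intertwines the ambient and the boundary operator, which reduces to a standard surface- or line-calculus identity (in each case an instance of Stokes' theorem, that is, of the naturality of $d$ under pullback). For the two upper squares one checks $(\grad u)\times\Bn_{f}|_{f}=\bcurl(u|_{f})$---the rotated tangential trace of a gradient is the surface vector curl of the restriction---and $(\curl\Vu)\cdot\Bn_{f}|_{f}=\bDiv\bigl((\Vu\times\Bn_{f})|_{f}\bigr)$---the normal component of a curl equals the surface divergence of the rotated tangential trace. For the lower square one has $(\bcurl w)\cdot\Bn_{e,f}|_{e}=\tfrac{d}{dl}(w|_{e})$, because the in-plane normal $\Bn_{e,f}$ is the quarter-turn of the edge tangent and $\bcurl$ is the quarter-turn of the surface gradient. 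Once the orientation conventions for $\Bn_{f}$, $\Bn_{e,f}$ and $\Bt_{e}$ are fixed, each identity is a one-line computation.

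The one genuinely delicate point is exactness at the interior edge-element node $\Cw^{1}_{p}(T)$: a curl-free field of $\Cw^{1}_{p}(T)$ must be shown to be the gradient of a polynomial of degree exactly $p+1$, not merely $p+2$, and it is the Euler-characteristic bookkeeping resting on the dimension formula \eqref{eq:9} that makes this clean. Everything else rests on the right-inverse properties of the Poincar\'e liftings already recorded in Sect.~\ref{sec:poincare-lifting}--\ref{sec:edge-elements} and on elementary trace identities, so I expect no further obstacles.
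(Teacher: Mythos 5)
Your proposal is correct, and its basic toolkit---the representations \eqref{eq:6} and \eqref{eq:106} together with the right-inverse identities $\curl\Poinc_{\Ba}=\Op{Id}$ on divergence-free polynomial fields and $\Div\mathsf{D}_{\Ba}=\Op{Id}$---is precisely what the paper's two-line proof invokes before delegating everything else (the face and edge rows and all commutativity claims) to \cite[Sect.~5]{HIP96b}. The genuine difference is at the node $\Cw^{1}_{p}(T)$: the stated identities reduce a curl-free $\Vu\in\Cw^{1}_{p}(T)$ to a curl-free element of $\Cpv_{p}(\bbR^{3})$ (your substitution $\Vq=-\curl\Vp$), but to conclude $\Vu\in\grad\Cp_{p+1}(T)$ one still needs a scalar ($0$-form) Poincar\'e lifting, an operator this paper never defines and which \cite{HIP96b} supplies in the general differential-forms setting; you instead close this node by an Euler-characteristic argument, namely exactness at all other nodes plus vanishing of the alternating dimension sum. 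That argument is sound: with $\dim\Cw^{2}_{p}(T)=\tfrac{1}{2}(p+1)(p+2)(p+4)$ from \cite{NED80}, the sum
\begin{gather*}
  1-\tfrac{1}{6}(p+2)(p+3)(p+4)+\tfrac{1}{2}(p+1)(p+3)(p+4)
  -\tfrac{1}{2}(p+1)(p+2)(p+4)+\tfrac{1}{6}(p+1)(p+2)(p+3)
\end{gather*}
vanishes identically in $p$, and the analogous 2D count for the face row vanishes as well. What your route buys is a proof that stays entirely inside the operators actually introduced in Sects.~\ref{sec:poincare-lifting}--\ref{sec:edge-elements}, at the price of importing the dimension formula for $\Cw^{2}_{p}(T)$; what the paper's route buys is brevity and an argument that works uniformly for all form degrees and dimensions. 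Your commutativity identities $(\grad u)\times\Bn_{f}=\bcurl(u_{|f})$, $(\curl\Vu)\cdot\Bn_{f}=\bDiv\bigl((\Vu\times\Bn_{f})_{|f}\bigr)$, $(\bcurl w)\cdot\Bn_{e,f}=\tfrac{d}{dl}(w_{|e})$ are exactly the vector-analysis incarnations of the naturality of $d$ used in \cite{HIP96b}. One small repair: in the kernel identification at $\Cw^{2}_{p}(T)$ you should quote \eqref{eq:6}, not \eqref{eq:72}, to place $\Poinc_{\Ba}\bigl({\boldsymbol\Cp}_{p}(\Div0,\bbR^{3})\bigr)$ inside $\Cw^{1}_{p}(T)$; equation \eqref{eq:72} concerns the smoothed lifting $\Poinc_{T}$, which pairs with Thm.~\ref{thm:CMI} rather than with Lemma~\ref{lem:lift}.
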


\begin{proof}
  The assertion about the top exact sequence is an immediate consequence of
  representations \eqref{eq:6} and \eqref{eq:106} and the relationships
  \begin{gather*}
    \curl\Poinc_{\Ba}(\Vu) = \Vu\quad\forall \Vu \in \Cpv_{p}(\Div0,T)\,,\quad
    \Div\mathsf{D}_{\Ba}(u) = u \quad\forall u\in\Cp_{p}(T)\;.
  \end{gather*}
  For further discussions and the proof of the other exact sequence properties see
  \cite[Sect.~5 for $n=3$]{HIP96b}.
\end{proof}


\section{Projection based interpolation}
\label{sec:proj-based-interp}

The degrees of freedom introduced above define local finite element projectors onto
$\Cw_{p}^{1}(T)$. In conjunction with suitably defined interpolation operators for
degree $p$ Lagrangian finite elements, they possess a very desirable commuting
diagram property \cite[Thm.~13]{HIP96b}, which will be explained below.  However,
they do not enjoy favorable continuity properties with increasing $p$. Thus, L.
Demkowicz \cite{DEB01,DEM04a,DEB04}, taking the cue from the theory of $p$-version
Lagrangian finite elements, invented an alternative in the form of local projection
based interpolation. 

\subsection{Projections, liftings, and extensions}
\label{sec:proj-lift-extens}

Again, consider a single tetrahedron $T\in\mesh$ and fix the polynomial degree
$p\in\bbN$. Following the developments of \cite[Sect.~3.5]{HIP02}, projection based
interpolation requires building blocks in the form of local \emph{orthogonal}
projections $\mathsf{P}_{\ast}^{l}$ and liftings $\mathsf{L}_{\ast}^{l}$\footnote{%
  The parameter $l$ in the notations for the extension operators
  $\mathsf{E}_{\ast}^{l}$, the projections $\mathsf{P}_{\ast}^{l}$, and the liftings
  $\mathsf{L}_{\ast}^{l}$ refers to the degree of the discrete differential form they
  operate on. This is explained more clearly in \cite[Sect.~3.5]{HIP02}.}.  Some
operators will depend on a regularity parameter $0<\epsilon<\frac{1}{2}$, which is
considered fixed below and will be specified in Sect.~\ref{sec:interp-estim}. To
begin with, we define for every $e\in\Cf_{1}(T)$
\begin{gather}
  \label{eq:21}
  \mathsf{P}^{1}_{e,p}:H^{-1+\epsilon}(e) \mapsto \frac{d}{dl}
  \smash{\overset{\circ}{\Cp}}_{p+1}(e) = 
  \smash{\overset{\circ}{\Cw}}^{1}_{p}(e)
\end{gather}
as the $H^{-1+\epsilon}(e)$-orthogonal projection. Here,
$\smash{\overset{\circ}{\Cp}}_{p}(F)$ denotes the space of degree $p$ polynomials on
a facet $F$ that vanish on $\partial F$.

Similarly, for every face $f\in\Cf_{2}(T)$ introduce
\begin{align}
  \label{eq:22}
  \mathsf{P}_{f,p}^{1}: & \Hmv[f]{-\frac{1}{2}+\epsilon}\mapsto 
  \bcurl\smash{\overset{\circ}{\Cp}}_{p+1}(f) = 
  \{\Vv\in \smash{\overset{\circ}{\Cw}}^{1}_{p}(f):\;\bDiv\Vv = 0\}\;,\\
   \label{eq:25}
  \mathsf{P}_{f,p}^{2}: & \Hmv[f]{-\frac{1}{2}+\epsilon}\mapsto
  \bDiv\smash{\overset{\circ}{\Cw}}^{1}_{p}(f)
  = \smash{\overset{\circ}{\Cw}}_{p}^{2}(f)\;, 
\end{align}
as the corresponding $\Hmv[f]{-\frac{1}{2}+\epsilon}$-orthogonal projections. Eventually, let
\begin{align}
  \label{eq:23}
  \mathsf{P}_{T,p}^{1}: & \Ltwov[T] \mapsto 
  \grad \smash{\overset{\circ}{\Cp}}_{p+1}(T) = 
  \{\Vv\in\smash{\overset{\circ}{\Cw}}_{p}^{1}(T):\;\curl\Vv=0\}\;,\\
  \label{eq:24}
  \mathsf{P}_{T,p}^{2}: & \Ltwov[T]\mapsto
  \curl\smash{\overset{\circ}{\Cw}}_{p}^{1}(T)
  = \{\Vv\in\smash{\overset{\circ}{\Cw}}_{p}^{2}(T):\; \Div\Vv=0\}\;,\\
  \label{eq:108}
  \mathsf{P}_{T,p}^{3}: & 
  \Ltwo[T] \mapsto \Div\smash{\overset{\circ}{\Cw}}_{p}^{2}(T) = 
  \{v\in\Cp_{p}(T):\;\int\nolimits_{T}v(\Bx)\,\mathrm{d}\Bx=0\}\;,
\end{align}
stand for the respective $\Ltwo[T]$-orthogonal projections. 

The lifting operators 
\begin{align}
  \label{eq:26}
  \mathsf{L}^{1}_{e,p} : & \smash{\overset{\circ}{\Cw}}^{1}_{p}(e) \mapsto 
  \smash{\overset{\circ}{\Cp}}_{p+1}(e)\;,\quad
  e\in\Cf_{1}(T)\;,\\
  \mathsf{L}^{1}_{f,p} : & \{\Vv\in
  \smash{\overset{\circ}{\Cw}}^{1}_{p}(f):\;\bDiv\Vv = 0\} \mapsto
  \smash{\overset{\circ}{\Cp}}_{p+1}(f)\;,\quad 
  f\in\Cf_{2}(T)\;,\\
  \mathsf{L}^{1}_{T,p} : & \{\Vv\in\smash{\overset{\circ}{\Cw}}_{p}^{1}(T):\;\curl\Vv=0\}
  \mapsto \smash{\overset{\circ}{\Cp}}_{p+1}(T)\;,
\end{align}
are uniquely defined by requiring
\begin{align}
  \label{eq:27}
  & \frac{d}{dl}\mathsf{L}^{1}_{e,p} u = u \quad 
  \forall u\in \smash{\overset{\circ}{\Cw}}^{1}_{p}(e)\;,\\
  & \bcurl\mathsf{L}^{1}_{f,p}\Vu = \Vu \quad\forall \Vu\in 
  \{\smash{\overset{\circ}{\Cw}}^{1}_{p}(f):\;\bDiv\Vv = 0\}\;,\\
  & \grad\mathsf{L}^{1}_{T,p}\Vu = \Vu \quad\forall \Vu\in 
  \{\Vv\in\smash{\overset{\circ}{\Cw}}_{p}^{1}(T):\;\curl\Vv=0\}\;.
\end{align}
Another class of liftings provides right inverses for $\curl$ and $\bDiv$:
Pick a face $f\in\Cf_{2}(T)$, and, without loss of generality, assume the vertex
opposite to the edge $\widetilde{e}$ to coincide with $0$. Then define
\begin{gather}
  \label{eq:28}
  \mathsf{L}^{2}_{f,p}: \left\{
    \begin{array}[c]{ccl}
    \bDiv \smash{\overset{\circ}{\Cw}}^{1}_{p}(f)&\mapsto & \smash{\overset{\circ}{\Cw}}^{1}_{p}(f)\\
    u & \mapsto & \Poinc_{0}^{2D}u -
    \bcurl\mathsf{E}^{0}_{\widetilde{e},p}\mathsf{L}^{1}_{\widetilde{e},p}
    (\Poinc_{0}^{2D}u\cdot\Bn_{\widetilde{e},f})\;.
  \end{array}
\right.
\end{gather}
This is a valid definition, since, by virtue of definition \eqref{eq:16}, the normal
components of $\Poinc_{0}^{2D}u$ will vanish on $\partial
f\setminus\widetilde{e}$. Moreover, $\bDiv\Poinc_{0}^{2D}u = u$ ensures that 
the normal component of $\Poinc_{0}^{2D}u$ has zero average on $\widetilde{e}$. We infer
\begin{multline*}
  {\Bigl(\bcurl\mathsf{E}^{0}_{\widetilde{e},p}\mathsf{L}^{1}_{\widetilde{e},p}
  \bigl({(\Poinc_{0}^{2D}\Vu\cdot\Bn_{\widetilde{e},f})}_{|\widetilde{e}}\bigr)
  \cdot \Bn_{\widetilde{e},f}\Bigr)}_{|\widetilde{e}} = \\
   \frac{d}{dl}\mathsf{L}^{1}_{\widetilde{e},p}\bigl({(\Poinc_{0}^{2D}\Vu)
   \cdot\Bn_{\widetilde{e},f}\Bigr)}_{|\widetilde{e}}
   = \Poinc_{0}^{2D}\Vu\cdot\Bn_{\widetilde{e},f}\quad\text{on }\widetilde{e}\;,
\end{multline*}
and see that the zero trace condition on $\partial f$ is satisfied. The same idea
underlies the definition of
\begin{gather}
  \label{eq:29}
  \mathsf{L}^{2}_{T,p} : 
  \left\{
    \begin{array}[c]{ccl}
      \curl\smash{\overset{\circ}{\Cw}}^{1}_{p}(T) &\mapsto & 
      \smash{\overset{\circ}{\Cw}}^{1}_{p}(T) \\
      \Vu & \mapsto & \Poinc_{0}\Vu - \grad \mathsf{E}^{0}_{\widetilde{f},p}
      \mathsf{L}^{1}_{\widetilde{f},p}\bigl(
      {((\Poinc_{0}\Vu)\times\Bn_{\widetilde{f}})}_{|\widetilde{f}}\bigr)\;,
    \end{array}
  \right.
\end{gather}
where $\widetilde{f}$ is the face opposite to vertex $0$, and the definition of
\begin{gather}
  \label{eq:109}
  \mathsf{L}^{3}_{T,p} : 
  \left\{
    \begin{array}[c]{ccl}
      \Div\smash{\overset{\circ}{\Cw}}^{2}_{p}(T) & \mapsto & 
      \smash{\overset{\circ}{\Cw}}^{2}_{p}(T) \\
      u & \mapsto & 
      \mathsf{D}_{0}u -
      \curl\mathsf{E}_{\widetilde{f},p}^{1}\mathsf{L}_{\widetilde{f},p}^{2}
      ({(\mathsf{D}_{0}u\cdot\Bn_{\widetilde{f}})}_{|\widetilde{f}})\;.
    \end{array}
  \right.
\end{gather}
The relationships between the various facet function spaces with vanishing
traces can be summarized in the following exact sequences:
\begin{gather}
  \label{eq:138}
  \begin{CD}
    \{0\} @>{\mathsf{Id}}>> \smash{\overset{\circ}{\Cp}}_{p+1}(T) 
    @>{\grad}>{\mathsf{L}_{T,p}^{1}}> \smash{\overset{\circ}{\Cw}}_{p}^{1}(T)
    @>{\curl}>{\mathsf{L}_{T,p}^{2}}> \smash{\overset{\circ}{\Cw}}_{p}^{2}(T) 
    @>{\Div}>{\mathsf{L}_{T,p}^{3}}> \overline{\Cp}_{p}(T) 
    @>{\mathsf{Id}}>> \{0\}, \\
    \{0\} @>{\mathsf{Id}}>> \smash{\overset{\circ}{\Cp}}_{p+1}(f) 
    @>{\bcurl}>{\mathsf{L}_{f,p}^{1}}> \smash{\overset{\circ}{\Cw}}_{p}^{1}(f)
    @>{\bDiv}>{\mathsf{L}^{2}_{f,p}}> \overline{\Cp}_{p}(f)
    @>{\mathsf{Id}}>> \{0\}, \\
    \{0\} @>{\mathsf{Id}}>> \smash{\overset{\circ}{\Cp}}_{p+1}(e) 
    @>{\frac{d}{dl}}>{\mathsf{L}^{1}_{e,p}}> \overline{\Cp}_{p}(e) 
    @>{\mathsf{Id}}>> \{0\}\;,
  \end{CD}
\end{gather}
where $\overline{\Cp}_{p}(F)$ designates degree $p$ polynomial spaces on $F$ with
vanishing mean.  These relationships and the lifting mappings are studied in
\cite[Sect.~3.4]{HIP02}.

Finally we need polynomial extension operators
\begin{align}
  \label{eq:33}
  & \mathsf{E}^{0}_{e,p}:\smash{\overset{\circ}{\Cp}}_{p+1}(e) \mapsto 
  \Cp_{p+1}(T)\;,\\
  \label{eq:36}
  & \mathsf{E}^{0}_{f,p}:\smash{\overset{\circ}{\Cp}}_{p+1}(f) \mapsto 
  \Cp_{p+1}(T)
\end{align}
that satisfy
\begin{align}
  \label{eq:34}
  {\mathsf{E}^{0}_{e,p}u}_{|e'} = 0 \quad\forall e'\in \Cf_{1}(T)\setminus\{e\}\;,\\
  \label{eq:35}
  {\mathsf{E}^{0}_{f,p}u}_{|f'} = 0\quad \forall f'\in \Cf_{2}(T)\setminus\{f\}\;.
\end{align}
Such extension operators can be constructed relying on a representation of a
polynomial on $F$, $F\in\Cf_{m}(T)$, $m=1,2$, as a homogeneous polynomial in the
barycentric coordinates of $F$, see \cite[Lemma~3.4]{HIP02}. As an alternative, one
may use the polynomial preserving extension operators proposed in \cite{MUN97a,DGS07}
and \cite{AID03}. We stress that continuity properties of the extensions
$\mathsf{E}^{l}_{F}$, $l=0,1$, $F\in\Cf_{m}(T)$, are immaterial.

\subsection{Interpolation operators}
\label{sec:interp-oper}

Now, we are in a position to define the projection based interpolation operators
locally on a generic tetrahedron $T$ with vertices $\Ba_{i}$, $i=1,2,3,4$. 

First, we devise a suitable projection (depending on the regularity parameter
$0<\epsilon<\frac{1}{2}$, which is usually suppressed to keep notations manageable)
\begin{gather}
  \label{eq:30}
  \Pi^{0}_{T,p} (=\Pi^{0}_{T,p}(\epsilon))\;:\;C^{\infty}(\overline{T})\mapsto \Cp_{p+1}(T)
\end{gather}
for degree $p$ Lagrangian $\Hone$-conforming finite elements. For $u\in
C^{0}(\overline{T})$ define ($\lambda_{i}$ is the barycentric coordinate function
belonging to vertex $\Ba_{i}$ of $T$)
\begin{align}
  \label{eq:54}
  & u^{(0)} := u - \underbrace{\sum\limits_{i=1}^{4}u(\Ba_{i})\lambda_{i}}_{:=
    w^{(0)}}\;,\\
  \label{eq:55}
  & u^{(1)} := u^{(0)} - \underbrace{\sum\limits_{e\in\Cf_{1}(T)}
    \mathsf{E}^{0}_{e,p}\mathsf{L}^{1}_{e,p}\mathsf{P}^{1}_{e,p}\frac{d}{ds}{u^{(0)}}_{|e}}_{%
    := w^{(1)}}\;,\\
  \label{eq:56}
  & u^{(2)} := u^{(1)} - 
  \underbrace{\sum\limits_{f\in\Cf_{1}(T)}\mathsf{E}^{0}_{f,p}
    \mathsf{L}^{1}_{f,p}\mathsf{P}^{1}_{f,p}\bcurl ({u^{(1)}}_{|f})}_{:= w^{(2)}}\;,\\
  \label{eq:57}
  & \Pi^{0}_{T,p}u := \mathsf{L}^{1}_{T,p}\mathsf{P}^{1}_{T,p}\grad u^{(2)} + w^{(2)} +
  w^{(1)} + w^{(0)}\;.
\end{align}
Observe that ${w^{(i)}}_{|F}=0$ for all $F\in\Cf_{m}(T)$, $0\leq m < i \leq 3$. We
point out that $w^{(0)}$ is the standard linear interpolant of $u$.
\begin{lemma}
  \label{lem:prj0}
  The linear mapping $\Pi^{0}_{T,p}$, $p\in\bbN_{0}$, is a projection onto $Cp_{p+1}(T)$
\end{lemma}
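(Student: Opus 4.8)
My plan is to reduce the projection property to two checks: that $\Pi^{0}_{T,p}$ maps into $\Cp_{p+1}(T)$, and that it reproduces every element of $\Cp_{p+1}(T)$. Once both hold the claim follows at once, since for arbitrary $u$ the image $\Pi^{0}_{T,p}u$ then lies in $\Cp_{p+1}(T)$ and is hence fixed by a second application, giving $(\Pi^{0}_{T,p})^{2}=\Pi^{0}_{T,p}$ with range exactly $\Cp_{p+1}(T)$. The first check I would read straight off the definition \eqref{eq:54}--\eqref{eq:57}: the vertex term $w^{(0)}$ is affine, the extension operators $\mathsf{E}^{0}_{e,p}$ and $\mathsf{E}^{0}_{f,p}$ take values in $\Cp_{p+1}(T)$ by \eqref{eq:33}--\eqref{eq:36}, and $\mathsf{L}^{1}_{T,p}$ lands in $\overset{\circ}{\Cp}_{p+1}(T)\subset\Cp_{p+1}(T)$.

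The substance is the reproduction property, which I would prove by a facet-by-facet peeling. Fixing $q\in\Cp_{p+1}(T)$, I would track the residuals $q^{(0)},q^{(1)},q^{(2)}$ of \eqref{eq:54}--\eqref{eq:56}. Since $w^{(0)}$ is the linear vertex interpolant, $q^{(0)}$ vanishes at every vertex, so $q^{(0)}_{|e}\in\overset{\circ}{\Cp}_{p+1}(e)$ for each edge $e$; then $\frac{d}{ds}q^{(0)}_{|e}$ already lies in the target $\overset{\circ}{\Cw}^{1}_{p}(e)$ of the orthogonal projection \eqref{eq:21}, so $\mathsf{P}^{1}_{e,p}$ leaves it untouched, while $\mathsf{L}^{1}_{e,p}$ inverts $\frac{d}{dl}$ by \eqref{eq:27}, returning exactly $q^{(0)}_{|e}$ (the inversion is exact because $\frac{d}{dl}$ is injective on $\overset{\circ}{\Cp}_{p+1}(e)$). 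As $\mathsf{E}^{0}_{e,p}$ reproduces this edge trace and vanishes on the other edges, $w^{(1)}$ agrees with $q^{(0)}$ on every edge, so $q^{(1)}$ carries zero trace along all edges. Repeating the identical pattern one dimension up---now with the middle row of \eqref{eq:138}, the pair $\mathsf{P}^{1}_{f,p},\mathsf{L}^{1}_{f,p}$ inverting $\bcurl$ on $\overset{\circ}{\Cp}_{p+1}(f)$, and the face extension $\mathsf{E}^{0}_{f,p}$---I would conclude that $w^{(2)}$ reproduces $q^{(1)}$ on every face, leaving $q^{(2)}\in\overset{\circ}{\Cp}_{p+1}(T)$ with vanishing trace on all of $\partial T$.

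For the interior term I would use that $\grad q^{(2)}$ then lies in the target $\grad\overset{\circ}{\Cp}_{p+1}(T)$ of $\mathsf{P}^{1}_{T,p}$ in \eqref{eq:23}, so that projection acts as the identity, while $\mathsf{L}^{1}_{T,p}$ inverts $\grad$ on $\overset{\circ}{\Cp}_{p+1}(T)$ (injectivity from the top row of \eqref{eq:138}); hence $\mathsf{L}^{1}_{T,p}\mathsf{P}^{1}_{T,p}\grad q^{(2)}=q^{(2)}$. Substituting into \eqref{eq:57} and telescoping the residual definitions gives
\begin{align*}
  \Pi^{0}_{T,p}q &= q^{(2)}+w^{(2)}+w^{(1)}+w^{(0)}\\
  &= \bigl(q-w^{(0)}-w^{(1)}-w^{(2)}\bigr)+w^{(2)}+w^{(1)}+w^{(0)} = q\;.
\end{align*}
The one delicate point---and the main obstacle I would watch for---is the trace bookkeeping: at each level I must already know that the current residual vanishes on the lower-dimensional facets before I restrict and project, so that the facet restriction genuinely belongs to the vanishing-trace space $\overset{\circ}{\Cp}_{p+1}(F)$ on which the lifting is a true inverse. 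The inductive peeling supplies precisely this, and the exactness of the sequences in \eqref{eq:138} is what upgrades each $\mathsf{L}^{1}_{\ast,p}$ from a mere right inverse to a genuine inverse on the relevant subspace.
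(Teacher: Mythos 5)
Your proof is correct and follows essentially the same route as the paper: assume the argument lies in $\Cp_{p+1}(T)$, then peel facet by facet, showing at each stage that the residual has vanishing trace on the lower-dimensional facets so that the orthogonal projections $\mathsf{P}^{1}_{e,p}$, $\mathsf{P}^{1}_{f,p}$, $\mathsf{P}^{1}_{T,p}$ act as the identity and the liftings recover the restricted residual, whence the telescoping sum returns the original polynomial. Your additional remarks---the explicit range check, the reduction of the projection property to idempotence, and the injectivity argument upgrading each right inverse to a genuine inverse on the vanishing-trace spaces---merely make explicit what the paper's proof leaves implicit.
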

\begin{proof}
  Assume $u\in\Cp_{p+1}(T)$, which will carry over to all intermediate functions.
  Since $u^{(0)}(\Bz_{i}) = 0$, $i=1,\ldots,4$, we conclude from the projection
  property of $\mathsf{P}^{1}_{e,p}$ that
  $\mathsf{L}^{1}_{e}\mathsf{P}^{1}_{e}\frac{d}{ds}{u^{(0)}}_{|e} = {u^{(0)}}_{|e}$
  for any edge $e\in\Cf_{1}(T)$. As a consequence
  \begin{gather}
    \label{eq:37}
    u^{(1)} = u^{(0)} - \sum\limits_{e\in\Cf_{1}(T)}
    \mathsf{E}^{0}_{e,p}{u^{(0)}}_{|e}
    \quad\Rightarrow\quad u^{(1)}_{|e} = 0 \quad\forall e\in \Cf_{1}(T)\;.
  \end{gather}
  We infer $\mathsf{L}^{1}_{f,p}\mathsf{P}^{1}_{f}\bcurl ({u^{(1)}}_{|f}) =
  {u^{(1)}}_{|f}$ on each face $f\in\Cf_{2}(T)$, which implies
  \begin{gather}
    \label{eq:38}
    u^{(2)} = u^{(1)} - \sum\limits_{f\in\Cf_{1}(T)}\mathsf{E}^{0}_{f,p}({u^{(1)}}_{|f})
    \quad\Rightarrow\quad
    {u^{(2)}}_{|f} = 0 \quad\forall f\in\Cf_{2}(T)\;.
  \end{gather}
  This means that $\mathsf{L}^{1}_{T,p}\mathsf{P}^{1}_{T,p}\grad u^{(2)} = u^{(2)}$
  and finishes the proof.
\end{proof}

A similar stage by stage construction applies to edge elements and gives a projection
\begin{gather}
  \label{eq:31}
  \Pi^{1}_{T,p} (=  \Pi^{1}_{T,p}(\epsilon))\;:\;
  \boldsymbol{C}^{\infty}(\overline{T}) \mapsto \Cw^{1}(T)\;:
\end{gather}
for a directed edge $e:=[\Va_{i},\Va_{j}]$ we introduce the Whitney-1-form basis
function
\begin{gather}
  \label{eq:bf7}
  \Vb_{e} = \lambda_{i}\grad \lambda_{j} - \lambda_{j}\grad \lambda_{i}\;.
\end{gather}
These functions span $\Cw^{1}_{0}(T)$. Next, for $\Vu\in
\boldsymbol{C}^{0}(\overline{T})$ define
\begin{align}
  \label{eq:39}
  & \Vu^{(0)} := \Vu - \underbrace{\Bigl(\sum\limits_{e\in \Cf_{1}(T)}
  \int\nolimits_{e}\Vu\cdot\mathrm{d}\vec{s}\Bigr)\;\Vb_{e}}_{:= \Vw^{(0)}}\;,\\
\label{eq:40}
   & \Vu^{(1)} := \Vu^{(0)} - 
   \underbrace{
     \sum\limits_{e\in\Cf_{1}(T)} \grad \mathsf{E}^{0}_{e,p}\mathsf{L}^{1}_{e,p}
     \mathsf{P}_{e,p}^{1}({(\Vu^{(0)}\cdot\Bt_{e})}_{|e})}_{:= \Vw^{(1)}}\;,\\
   \label{eq:41}
   &  \Vu^{(2)} := \Vu^{(1)} - 
   \underbrace{
     \sum\limits_{f\in\Cf_{2}(T)}
     \mathsf{E}^{1}_{f,p}\mathsf{L}^{2}_{f,p}
     \mathsf{P}^{2}_{f,p}\bDiv({(\Vu^{(1)}\times\Bn_{f})}_{|f})
   }_{:= \Vw^{(2)}} \;,\\
   \label{eq:42}
   & \Vu^{(3)} := \Vu^{(2)} - 
   \underbrace{
     \sum\limits_{f\in\Cf_{2}(T)} \grad\mathsf{E}^{0}_{f,p}\mathsf{L}^{1}_{f,p}
     \mathsf{P}^{1}_{f,p}({(\Vu^{(2)}\times \Bn_{f})}_{|f})
   }_{:= \Vw^{(3)}} \;,\\
   \label{eq:43}
  &
  \Vu^{(4)} := \Vu^{(3)} - 
  \underbrace{
    \mathsf{L}^{2}_{T,p}\mathsf{P}_{T,p}^{2}\curl\Vu^{(3)}
  }_{:= \Vw^{(4)}} \;,\\
  \label{eq:44}
  & \Pi^{1}_{T,p}\Vu := \grad \mathsf{L}^{1}_{T,p}\mathsf{P}^{1}_{T,p}\Vu^{(4)} 
  +  \Vw^{(4)} +  \Vw^{(3)} +  \Vw^{(2)} + \Vw^{(1)} +   \Vw^{(0)}\;.
\end{align}
The contribution $\Vw^{(0)}$ is the standard interpolant $\Pi^{1}_{T,0}$ of $\Vu$
onto the local space of Whitney-1-forms (lowest order edge elements, see
\cite[Sect.~5.5.1]{MON03}). The extension operators were chosen in a way that
guarantees that $\Vw^{(2)}\cdot\Bt_{e} = 0$ and $\Vw^{(3)}\cdot\Bt_{e} = 0$ for all
$e\in\Cf_{1}(T)$.
\begin{lemma}
  \label{lem:prj1}
  The linear mapping $\Pi^{1}_{T,p}$, $p\in\bbN_{0}$, is a projection onto
  $\Cw^{1}_{p}(T)$ and satisfies the
  \emph{commuting diagram property}
  \begin{gather}
    \label{eq:32}
    \Pi^{1}_{T,p}\circ\grad = \grad\circ \Pi^{0}_{T,p}\quad\text{on }
    C^{\infty}(\overline{T})\;.
  \end{gather}
\end{lemma}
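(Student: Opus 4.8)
The plan is to follow two separate lines of argument. For the projection property I will mimic, stage by stage, the trace-annihilation bookkeeping of the proof of Lemma~\ref{lem:prj0}, exploiting the projection property of each $\mathsf{P}^{l}_{\ast,p}$ together with the right-inverse (lifting) relations \eqref{eq:27}. For the commuting diagram~\eqref{eq:32} I will feed a gradient $\Vu:=\grad u$ into the construction~\eqref{eq:39}--\eqref{eq:44} and show that it reproduces, term for term, the scalar construction~\eqref{eq:54}--\eqref{eq:57} hit with $\grad$; the curl-freeness of $\grad u$ will make exactly the two ``solenoidal'' stages collapse.

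For the projection property, assume $\Vu\in\Cw^{1}_{p}(T)$; by telescoping, $\Vu^{(4)}=\Vu-\sum_{i=0}^{4}\Vw^{(i)}$, so that $\Pi^{1}_{T,p}\Vu=\Vu$ will follow once I show $\grad\mathsf{L}^{1}_{T,p}\mathsf{P}^{1}_{T,p}\Vu^{(4)}=\Vu^{(4)}$, i.e. that $\Vu^{(4)}$ is a curl-free element of $\smash{\overset{\circ}{\Cw}}^{1}_{p}(T)$. Since $\Vw^{(0)}$ matches all edge circulations, $(\Vu^{(0)}\cdot\Bt_{e})_{|e}$ has zero mean and hence lies in $\smash{\overset{\circ}{\Cw}}^{1}_{p}(e)$ by \eqref{eq:14}; the projection property of $\mathsf{P}^{1}_{e,p}$ and $\tfrac{d}{ds}\mathsf{L}^{1}_{e,p}=\mathsf{Id}$ then force $\Vu^{(1)}\cdot\Bt_{e}=0$ on every edge, exactly as in \eqref{eq:37}. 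With the edge tangential traces gone, $(\Vu^{(1)}\times\Bn_{f})_{|f}\in\smash{\overset{\circ}{\Cw}}^{1}_{p}(f)$, whose $\bDiv$-part is captured by stage \eqref{eq:41} (right-inverse property of $\mathsf{L}^{2}_{f,p}$, projection property of $\mathsf{P}^{2}_{f,p}$) and whose remaining solenoidal part is captured by stage \eqref{eq:42} ($\mathsf{P}^{1}_{f,p}$ with $\bcurl\mathsf{L}^{1}_{f,p}=\mathsf{Id}$); this yields $\Vu^{(3)}\times\Bn_{f}=0$ on every face, so $\Vu^{(3)}\in\smash{\overset{\circ}{\Cw}}^{1}_{p}(T)$. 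Finally stage \eqref{eq:43} removes the $\curl$-part via $\mathsf{L}^{2}_{T,p}\mathsf{P}^{2}_{T,p}$, leaving $\Vu^{(4)}$ curl-free in $\smash{\overset{\circ}{\Cw}}^{1}_{p}(T)$, as required. The exact-sequence and trace structure of Theorem~\ref{thm:locDR} is precisely what guarantees that each argument lands in the finite-dimensional space on which the corresponding projection acts as the identity.

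For the commuting diagram, set $\Vu:=\grad u$. The cornerstone is the lowest-order identity $\grad w^{(0)}=\Vw^{(0)}$, i.e. the gradient of the nodal linear interpolant of $u$ equals the Whitney interpolant of $\grad u$; this is immediate from $\int_{e}\grad u\cdot\mathrm{d}\vec{s}=u(\Va_{j})-u(\Va_{i})$ for $e=[\Va_{i},\Va_{j}]$ and the definition \eqref{eq:bf7} of $\Vb_{e}$. Hence $\Vu^{(0)}=\grad u^{(0)}$. Using $(\grad v\cdot\Bt_{e})_{|e}=\tfrac{d}{ds}(v_{|e})$ gives $\Vw^{(1)}=\grad w^{(1)}$, so $\Vu^{(1)}=\grad u^{(1)}$. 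Now curl-freeness collapses two stages: with $(\grad v\times\Bn_{f})_{|f}=\bcurl(v_{|f})$ and $\bDiv\bcurl=0$ one gets $\bDiv\bigl((\Vu^{(1)}\times\Bn_{f})_{|f}\bigr)=0$, whence $\Vw^{(2)}=0$ and $\Vu^{(2)}=\grad u^{(1)}$; and $\curl\grad=0$ gives $\curl\Vu^{(3)}=0$, whence $\Vw^{(4)}=0$ and $\Vu^{(4)}=\grad u^{(2)}$. The surviving face stage \eqref{eq:42}, acting on $(\grad u^{(1)}\times\Bn_{f})_{|f}=\bcurl(u^{(1)}_{|f})$, matches the scalar stage \eqref{eq:56}, so $\Vw^{(3)}=\grad w^{(2)}$. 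Assembling \eqref{eq:44},
\begin{gather*}
  \Pi^{1}_{T,p}\grad u
  = \grad\mathsf{L}^{1}_{T,p}\mathsf{P}^{1}_{T,p}\grad u^{(2)}
    + \grad w^{(2)} + \grad w^{(1)} + \grad w^{(0)} = \grad\,\Pi^{0}_{T,p}u\,,
\end{gather*}
which is exactly \eqref{eq:57} hit with $\grad$.

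The main obstacle is the bookkeeping that makes the two collapsing stages and the resulting index shift between the scalar and vector constructions line up: one must verify that curl-freeness of $\Vu$ survives the edge stages, so that the inputs to the face and interior solenoidal projections genuinely lie in the ranges of $\bcurl$ resp. $\curl$ and are therefore annihilated by the kernel projections, and one must pin down the orientation conventions in the surface identities $(\grad v\times\Bn_{f})_{|f}=\bcurl(v_{|f})$ so that they are consistent with the commuting diagram of Theorem~\ref{thm:locDR}. The lowest-order commutation $\grad w^{(0)}=\Vw^{(0)}$, though classical, is the linchpin on which the entire induction rests.
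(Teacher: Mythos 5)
Your proof is correct and takes essentially the same route as the paper's: the same stage-by-stage verification that the argument of each projection lies in the range on which it acts as the identity (giving the projection property via $\grad\mathsf{L}^{1}_{T,p}\mathsf{P}^{1}_{T,p}\Vu^{(4)}=\Vu^{(4)}$), and the same term-by-term identification $\Vu^{(0)}=\grad u^{(0)}$, $\Vu^{(2)}=\Vu^{(1)}=\grad u^{(1)}$, $\Vu^{(4)}=\Vu^{(3)}=\grad u^{(2)}$ with the two solenoidal stages collapsing to zero. The only cosmetic difference is that you verify the lowest-order identity $\grad w^{(0)}=\Vw^{(0)}$ directly from the edge integrals and \eqref{eq:bf7}, whereas the paper simply cites it as the classical commuting-diagram property of the Whitney-form interpolants.
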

\begin{proof}
  The proof of the projection property runs parallel to that of
  Lemma~\ref{lem:prj0}. Assuming $\Vu\in\Cw^{1}_{p}(T)$ it is obvious that
  the same will hold for all $\Vu^{(i)}$ and $\Vw^{(i)}$ from
  \eqref{eq:39}-\eqref{eq:44}. In order to confirm that all projections
  can be discarded, we have to check that their arguments satisfy conditions
  of zero trace on the facet boundaries and, in some cases, belong to the kernel
  of differential operators.
  
  First, recalling the properties of the interpolation operator $\Pi^{1}_{0}$ for
  Whitney-1-forms, we find ${(\Vu^{(0)}\cdot\Bt_{e})}_{|e}\in
  \smash{\overset{\circ}{\Cw}}_{p}^{1}(e)$. This implies 
  \begin{gather}
    \label{eq:45}
    \grad \mathsf{E}^{0}_{e,p}\mathsf{L}^{1}_{e,p}
    \mathsf{P}_{e,p}^{1}({(\Vu^{(0)}\cdot\Bt_{e})}_{|e}) =
    {(\Vu^{(0)}\cdot\Bt_{e})}_{|e}\quad
    \forall e\in \Cf_{1}(T)\;,\\
    \intertext{and} 
    \label{eq:46}
    {(\Vu^{(1)}\cdot\Bt_{e})}_{|e} \equiv 0 \quad\forall e\in \Cf_{1}(T)\;.
  \end{gather}
  We see that ${(\Vu^{(1)}\times\Bn_{f})}_{|f}\in \smash{\overset{\circ}{\Cw}}_{p}^{1}(f)$ 
  for any $f\in\Cf_{2}(T)$, so that
  \begin{align}
    \label{eq:47}
    & \mathsf{P}^{2}_{f,p}\bDiv({(\Vu^{(1)}\times\Bn_{f})}_{|f}) = 
    \bDiv({(\Vu^{(1)}\times\Bn_{f})}_{|f}) \\ 
    \label{eq:48}
    \Rightarrow\quad &
    \bDiv \mathsf{L}^{2}_{f,p}
    \mathsf{P}^{2}_{f,p}\bDiv({(\Vu^{(1)}\times\Bn_{f})}_{|f}) =
    \bDiv({(\Vu^{(1)}\times\Bn_{f})}_{|f})\\
    \label{eq:49}
    \Rightarrow\quad &
    \bDiv({(\Vu^{(2)}\times\Bn_{f})}_{|f}) = 0 \quad\forall
    f\in\Cf_{2}(T)\;,\quad
    {(\Vu^{(2)}\cdot\Bt_{e})}_{|e} \equiv 0 \quad\forall e\in \Cf_{1}(T)\\
    \label{eq:50}
    \Rightarrow\quad &
    \mathsf{P}^{1}_{f,p}({(\Vu^{(2)}\times \Bn_{f})}_{|f}) = 
    {(\Vu^{(2)}\times \Bn_{f})}_{|f}\quad\forall f\in\Cf_{2}(T) \\
    \label{eq:51}
    \Rightarrow\quad &
    \grad\mathsf{E}^{0}_{f,p}\mathsf{L}^{1}_{f,p}
     \mathsf{P}^{1}_{f,p}({(\Vu^{(2)}\times \Bn_{f})}_{|f})\times\Bn_{f} = 
     {(\Vu^{(2)}\times \Bn_{f})}_{|f}\quad\forall f\in\Cf_{2}(T) \\
     \label{eq:52}
     \Rightarrow\quad &
     {(\Vu^{(3)}\times \Bn_{f})}_{|f} = 0 \quad\forall f\in\Cf_{2}(T)\\
     \label{eq:53}
     \Rightarrow\quad &
     \mathsf{P}_{T,p}^{2}\curl\Vu^{(3)} = \curl\Vu^{(3)} \\
     \label{eq:54a}
     \Rightarrow\quad &
     \curl\mathsf{L}^{2}_{T,p}\mathsf{P}_{T,p}^{2}\curl\Vu^{(3)} = \curl\Vu^{(3)}\\
     \label{eq:55a}
     \Rightarrow\quad &
     \curl\Vu^{(4)} = 0\quad\Rightarrow\quad
     \mathsf{P}^{1}_{T}\Vu^{(4)} = \Vu^{(4)} \\
     \label{eq:56a}
     \Rightarrow\quad &
     \grad \mathsf{L}^{1}_{T}\mathsf{P}^{1}_{T}\Vu^{(4)} = \Vu^{(4)}\;,
  \end{align}
  which confirms the projector property.

  Now assume $\Vu=\grad u$ for some $u\in C^{\infty}(\overline{T})$.  The commuting
  diagram property will follow, if we manage to show $\grad u^{(0)} = \Vu^{(0)}$,
  $\grad u^{(1)} = \Vu^{(1)}$, $\grad u^{(2)} = \Vu^{(3)}$, etc., for the intermediate
  functions in \eqref{eq:54}-\eqref{eq:57} and \eqref{eq:39}-\eqref{eq:44},
  respectively.

  By the commuting diagram property for the standard local interpolation operators
  onto the spaces of Whitney-0-forms (linear polynomials) and Whitney-1-forms, we
  conclude 
  \begin{align}
    \label{eq:58}
    & \grad u^{(0)} = \Vu^{(0)}\quad\Rightarrow\quad
    \frac{d}{ds}{u^{(0)}}_{|e} = 
    {(\Vu^{(0)}\cdot\Bt_{e})}_{|e}\quad \forall e\in\Cf_{1}(T) \\
    \label{eq:59}
    \Rightarrow\quad &
    \Vu^{(1)} = \grad u^{(1)}\quad\Rightarrow\quad
    \bDiv({(\Vu^{(1)}\times\Bn_{f})}_{|f}) = 0 \quad\forall f\in\Cf_{2}(T) 
    \\
    \label{eq:60}
    \Rightarrow\quad & \Vu^{(2)} = \Vu^{(1)}\\
     \label{eq:60a}
    \Rightarrow\quad &
    {(\Vu^{(2)}\times \Bn_{f})}_{|f} = \bcurl {u^{(1)}}_{f}\quad\forall
    f\in\Cf_{2}(T)\quad
    \Rightarrow\quad \Vu^{(3)} = \grad u^{(2)}\\
    \label{eq:61}
    \Rightarrow\quad & \Vu^{(4)} =
    \Vu^{(3)}\;.
  \end{align}
  Of course, analogous relationships for the functions $w^{(i)}$ and $\Vw^{(i)}$
  hold, which yields $\Pi^{1}_{T,p}\Vu = \grad \Pi^{0}_{T,p}u$. 
\end{proof}

Following \cite[Sect.~3.5]{HIP02}, a projection based interpolation onto
$\Cw^{2}_{p}(T)$, the operator $\Pi^{2}_{T,p}
(=\Pi^{2}_{T,p}(\epsilon)):\boldsymbol{C}^{\infty}
(\overline{T})\mapsto\Cw^{2}_{p}(T)$,
involves the stages
\begin{align}
  \label{eq:20}
  \Vu^{(0)} & := \Vu - 
  \underbrace{\Bigl(\sum\limits_{f\in\Cf_{2}(T)} \int\nolimits_{f}
    \Vu\cdot\Bn_{f}\,\mathrm{d}S\Bigr)\,\Vb_{f}}_{:= \Vw^{(0)}}\;,\\
  \label{eq:110}
  \Vu^{(1)} & := \Vu^{(0)} - 
  \underbrace{
    \sum\limits_{f\in\Cf_{2}(T)} \curl\mathsf{E}_{f,p}^{1}\mathsf{L}_{f,p}^{2}
    \mathsf{P}_{f,p}^{2}\bigl(
    {(\Vu^{(0)}\cdot\Bn_{f})}_{|f}
    \bigr)
  }_{{:=\Vw^{(1)}}} \\
  \label{eq:111}
  \Vu^{(2)} & := \Vu^{(1)} - \underbrace{
    \mathsf{L}^{3}_{T,p}\mathsf{P}^{3}_{T,p}\Div\Vu^{(1)}}_{:=\Vw^{(2)}}\\
  \label{eq:112}
  \Pi^{2}_{T,p}\Vu & := \curl\mathsf{L}^{2}_{T,p}\mathsf{P}_{T,p}\Vu^{(2)}
  + \Vw^{(0)} + \Vw^{(1)} + \Vw^{(2)}\;.
\end{align}
Here, $\Vb_{f}$ refers to the local basis functions for Whitney-2-forms
\cite[Sect.~3.2]{HIP02}:
\begin{gather}
  \label{eq:114}
  \Vb_{f} = \lambda_{i}\grad\lambda_{j}\times\grad \lambda_{k} + 
  \lambda_{j}\grad\lambda_{k}\times\lambda_{i} + 
  \lambda_{k}\grad\lambda_{i}\times\lambda_{j}\;.
\end{gather}

Analogous to Lemma~\ref{lem:prj1} one proves the following result.

\begin{lemma}
  \label{lem:43}
  The linear operator $\Pi^{2}_{T,p}$, $p\in\bbN_{0}$, is a projection onto
  $\Cw^{2}_{p}(T)$ and satisfies the \emph{commuting diagram property}
  \begin{gather}
    \label{eq:113}
    \Pi^{2}_{T,p}\circ \curl = \curl \circ\Pi^{1}_{T,p}\quad
    \text{on }\boldsymbol{C}^{\infty}(\overline{T})\;.
  \end{gather}
\end{lemma}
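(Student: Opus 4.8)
The plan is to follow the blueprint of Lemma~\ref{lem:prj1} one stage lower in the de Rham complex, replacing the pair $(\Pi^{0}_{T,p},\Pi^{1}_{T,p})$ by $(\Pi^{1}_{T,p},\Pi^{2}_{T,p})$ and the gradient by the curl. For the \emph{projection property} I would assume $\Vu\in\Cw^{2}_{p}(T)$ and march through the stages \eqref{eq:20}--\eqref{eq:111}, checking at each step that the orthogonal projection in question meets an argument already lying in its range, so that it may be discarded. Concretely: the Whitney-2 interpolant $\Vw^{(0)}$ reproduces the face fluxes, so $\Vu^{(0)}$ has zero mean normal trace on every face, i.e. ${(\Vu^{(0)}\cdot\Bn_{f})}_{|f}\in\smash{\overset{\circ}{\Cw}}^{2}_{p}(f)$; hence $\mathsf{P}^{2}_{f,p}$ acts as the identity, and the construction of $\mathsf{L}^{2}_{f,p}$ and $\mathsf{E}^{1}_{f,p}$ forces ${(\Vu^{(1)}\cdot\Bn_{f})}_{|f}\equiv0$ for all $f\in\Cf_{2}(T)$, so that $\Vu^{(1)}\in\smash{\overset{\circ}{\Cw}}^{2}_{p}(T)$. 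Then $\Div\Vu^{(1)}$ is a zero-mean element of $\Cp_{p}(T)$, $\mathsf{P}^{3}_{T,p}$ is again the identity, $\Div\mathsf{L}^{3}_{T,p}=\mathsf{Id}$ yields $\Div\Vu^{(2)}=0$, and therefore $\Vu^{(2)}\in\curl\smash{\overset{\circ}{\Cw}}^{1}_{p}(T)$. On this last space $\mathsf{P}^{2}_{T,p}$ is the identity and $\curl\mathsf{L}^{2}_{T,p}$ its right inverse, so the leading term of \eqref{eq:112} reproduces $\Vu^{(2)}$ and $\Pi^{2}_{T,p}\Vu=\Vu^{(2)}+\Vw^{(2)}+\Vw^{(1)}+\Vw^{(0)}=\Vu$.

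For the \emph{commuting diagram property} I would take $\Vu=\curl\Vv$ with $\Vv\in\boldsymbol{C}^{\infty}(\overline{T})$ and, exactly as in Lemma~\ref{lem:prj1}, identify the intermediate quantities of the $\Pi^{2}$-construction applied to $\curl\Vv$ with curls of those of the $\Pi^{1}$-construction applied to $\Vv$. Distinguishing the stage-$k$ corrections of the two constructions by $\Vw^{(k)}_{\Pi^{1}}$ and $\Vw^{(k)}_{\Pi^{2}}$, the three ingredients are: (i) the lowest-order commuting relation $\curl\Vw^{(0)}_{\Pi^{1}}=\Vw^{(0)}_{\Pi^{2}}$ (the curl of the Whitney-1 interpolant of $\Vv$ equals the Whitney-2 interpolant of $\curl\Vv$), which gives $\Vu^{(0)}=\curl\Vv^{(0)}$; (ii) the face square of Thm.~\ref{thm:locDR}, namely ${(\curl\Vv\cdot\Bn_{f})}_{|f}=\bDiv({(\Vv\times\Bn_{f})}_{|f})$, together with $\bDiv\circ\bcurl=0$ (so that the edge-gradient correction $\Vw^{(1)}_{\Pi^{1}}$ does not alter this surface divergence), which makes the arguments of the face projections coincide and yields $\Vw^{(1)}_{\Pi^{2}}=\curl\Vw^{(2)}_{\Pi^{1}}$ and $\Vu^{(1)}=\curl\Vv^{(2)}=\curl\Vv^{(3)}$; and (iii) $\Div\circ\curl=0$, which forces the interior divergence-correction stage to vanish, $\Vw^{(2)}_{\Pi^{2}}=0$, whence $\Vu^{(2)}=\curl\Vv^{(3)}$.

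It then remains to apply $\curl$ to $\Pi^{1}_{T,p}\Vv$ term by term: the $\grad$-contributions and the edge/face gradient corrections drop out because $\curl\grad=0$, the Whitney term produces $\Vw^{(0)}_{\Pi^{2}}$ by (i), the $\Vw^{(2)}_{\Pi^{1}}$ term gives $\curl\Vw^{(2)}_{\Pi^{1}}=\Vw^{(1)}_{\Pi^{2}}$ by (ii), and the $\Vw^{(4)}_{\Pi^{1}}=\mathsf{L}^{2}_{T,p}\mathsf{P}^{2}_{T,p}\curl\Vv^{(3)}$ term yields $\mathsf{P}^{2}_{T,p}\curl\Vv^{(3)}$ via $\curl\mathsf{L}^{2}_{T,p}=\mathsf{Id}$. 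Matching these against $\Pi^{2}_{T,p}\curl\Vv$ and using once more that $\curl\mathsf{L}^{2}_{T,p}=\mathsf{Id}$ on $\curl\smash{\overset{\circ}{\Cw}}^{1}_{p}(T)$, so that $\curl\mathsf{L}^{2}_{T,p}\mathsf{P}^{2}_{T,p}\curl\Vv^{(3)}=\mathsf{P}^{2}_{T,p}\curl\Vv^{(3)}$, closes the identity. I expect the main obstacle to be bookkeeping rather than depth: one must carefully track which correction terms are pure gradients or pure curls so that the outer $\curl$ or $\Div$ annihilates them, and the one genuinely non-formal step is the surface identity in (ii) relating the normal component of a curl to the surface divergence of the rotated tangential trace, which is precisely the commutativity of the face square in Thm.~\ref{thm:locDR}.
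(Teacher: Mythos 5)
Your proposal is correct and is precisely the argument the paper intends: the paper gives no explicit proof of Lemma~\ref{lem:43}, stating only that it is proved ``analogous to Lemma~\ref{lem:prj1}'', and your write-up carries out exactly that analogy one level down the complex (projection property via stage-by-stage verification that each orthogonal projection meets an argument in its range, commutation via identification of the intermediate quantities $\Vu^{(k)}$ with curls of the $\Vv^{(k)}$, using the Whitney commuting property, the face trace identity $(\curl\Vv)\cdot\Bn_{f}=\bDiv(\Vv\times\Bn_{f})$ from Thm.~\ref{thm:locDR}, and $\Div\circ\curl=0$). No gaps; the bookkeeping of which correction terms are annihilated by the outer $\curl$ matches the structure of the paper's proof of Lemma~\ref{lem:prj1}.
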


The next lemma makes it possible to patch together the local projection
based interpolation operator to obtain global interpolation operators
\begin{gather}
  \label{eq:118}
  \Pi_{p}^{l}:\boldsymbol{C}^{\infty}(\overline{\Omega}) \mapsto
  \Cw^{l}_{p}(\mesh)\;,\quad
  l=1,2\;.
\end{gather}

\begin{lemma}
  \label{lem:comp}
  For any $F\in\Cf_{m}(T)$, $m=0,1,2$, and $u\in C^{\infty}(\overline{T})$ 
  the restriction ${\Pi^{0}_{T,p}u}_{|F}$ depends only on $u_{|F}$. 

  For any $F\in\Cf_{m}(T)$, $m=1,2$, and $\Vu\in \boldsymbol{C}^{\infty}(\overline{T})$ 
  the tangential trace of $\Pi^{1}_{T,p}\Vu$ onto $F$ depends only on the tangential
  trace of $\Vu$ on $F$. 

  For any face $f\in\Cf_{2}(T)$ and $\Vu\in \boldsymbol{C}^{\infty}(\overline{T})$ 
  the normal trace of $\Pi^{2}_{T,p}\Vu$ onto $f$ depends only on the normal
  component of $\Vu$ on $f$.
\end{lemma}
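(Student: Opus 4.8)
The plan is to exploit the hierarchical stage-by-stage structure of the three constructions \eqref{eq:54}--\eqref{eq:57}, \eqref{eq:39}--\eqref{eq:44}, and \eqref{eq:20}--\eqref{eq:112}. Each correction $w^{(i)}$, resp.\ $\Vw^{(i)}$, is a sum of pieces indexed by the facets of one fixed dimension, and I would prove by induction on the stage $i$ that the relevant trace of $w^{(i)}$ (resp.\ $\Vw^{(i)}$) onto a facet $F$ is completely determined by the corresponding trace of $\Vu$ on $F$. Two structural facts drive the induction. First, the building blocks are facet-local: the range $\Cy^1_p(f)$ of $\mathsf{E}^1_{f,p}$ has tangential trace vanishing on every other face by \eqref{eq:17}, $\mathsf{E}^0_{f,p}$ vanishes on every other face by \eqref{eq:35} (and hence on every edge not lying in $f$, each such edge being the intersection of two faces $\neq f$), while $\mathsf{E}^0_{e,p}$ vanishes on every other edge by \eqref{eq:34} \emph{and}, crucially, on the interior of every face not containing $e$. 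Second, the terminal ``interior'' contributions $\mathsf{L}^1_{T,p}\mathsf{P}^1_{T,p}\grad u^{(2)}$, $\Vw^{(4)}$ together with $\grad\mathsf{L}^1_{T,p}\mathsf{P}^1_{T,p}\Vu^{(4)}$, and $\Vw^{(2)}$ together with $\curl\mathsf{L}^2_{T,p}\mathsf{P}_{T,p}\Vu^{(2)}$ lie in $\smash{\overset{\circ}{\Cp}}_{p+1}(T)$, $\smash{\overset{\circ}{\Cw}}_p^1(T)$, $\smash{\overset{\circ}{\Cw}}_p^2(T)$, all of which have vanishing scalar/tangential/normal trace on $\partial T$ and therefore never influence any proper-facet trace.

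For $\Pi^0_{T,p}$ I would trace the stages on a fixed facet $F$. The linear part $w^{(0)}=\sum_i u(\Va_i)\lambda_i$ restricted to $F$ uses only the vertex values of $F$, hence only $u_{|F}$. In $w^{(1)}=\sum_e\mathsf{E}^0_{e,p}\mathsf{L}^1_{e,p}\mathsf{P}^1_{e,p}\tfrac{d}{ds}{u^{(0)}}_{|e}$ the facet-locality annihilates on $F$ every summand with $e\not\subset F$, while a summand with $e\subset F$ feeds on ${u^{(0)}}_{|e}$, already governed by $u_{|F}$; thus ${w^{(1)}}_{|F}$ depends only on $u_{|F}$. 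By \eqref{eq:35} the face sum $w^{(2)}=\sum_f\mathsf{E}^0_{f,p}(\cdots)$ leaves on a face $F=f_0$ only the $f_0$-term, whose datum $\bcurl({u^{(1)}}_{|f_0})$ is controlled by $u_{|f_0}$ through the previous steps, whereas on an edge or vertex $w^{(2)}$ vanishes altogether. Since the interior term drops out, ${\Pi^0_{T,p}u}_{|F}$ depends only on $u_{|F}$.

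For $\Pi^1_{T,p}$ the same scheme is run for the tangential trace, first on an edge $e_0$ and then on a face $f_0$. The lowest-order part $\Vw^{(0)}$ is local because $\Vb_e$ has vanishing tangential trace on any facet missing $e$ (on $e_0$ since $\grad\lambda_m\cdot\Bt_{e_0}=0$ whenever $\lambda_m\equiv0$ on $e_0$, on a face since the surface trace of $\grad\lambda_l$ vanishes where $\lambda_l\equiv0$). For the edge claim, the only further surviving piece is the $e_0$-term of $\Vw^{(1)}$, whose tangential trace on $e_0$ equals $\tfrac{d}{ds}\phi_{e_0}$ with $\phi_{e_0}$ built from $({\Vu^{(0)}\cdot\Bt_{e_0}})_{|e_0}$; the face corrections satisfy $\Vw^{(2)}\cdot\Bt_e=\Vw^{(3)}\cdot\Bt_e=0$, and $\Vw^{(4)}$ together with the last gradient term lie in $\smash{\overset{\circ}{\Cw}}_p^1(T)$, so none of them contributes on $e_0$. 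For the face claim I would feed the locality of ${\Vw^{(0)}}$ and ${\Vw^{(1)}}$ on $f_0$ into the induction: facet-locality (and \eqref{eq:17}) leaves on $f_0$ only the contributions attached to $f_0$ and to its edges, whose data $({\Vu^{(0)}\cdot\Bt_e})_{|e}$ ($e\subset f_0$), $\bDiv({(\Vu^{(1)}\times\Bn_{f_0})}_{|f_0})$, and ${(\Vu^{(2)}\times\Bn_{f_0})}_{|f_0}$ are, step by step, determined by the tangential trace of $\Vu$ on $f_0$. The statement for $\Pi^2_{T,p}$ is parallel, now with normal traces: $\Vw^{(0)}$ is local by the Whitney-2-form property $\Vb_f\cdot\Bn_{f'}=0$ for $f'\neq f$, the normal trace on $f_0$ of $\Vw^{(1)}=\sum_f\curl\mathsf{E}^1_{f,p}(\cdots)$ comes through the surface curl of the tangential trace of $\mathsf{E}^1_{f_0,p}(\cdots)$ (all faces $f\neq f_0$ killed by \eqref{eq:17}), governed by $({\Vu^{(0)}\cdot\Bn_{f_0}})_{|f_0}$, while $\Vw^{(2)}\in\smash{\overset{\circ}{\Cw}}_p^2(T)$ and $\curl\mathsf{L}^2_{T,p}\mathsf{P}_{T,p}\Vu^{(2)}$ both have zero normal trace on $\partial T$.

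The one genuinely delicate ingredient is the sharpened vanishing of the edge extension $\mathsf{E}^0_{e,p}$ on the interior of the two faces not containing $e$; property \eqref{eq:34} only yields vanishing on $\partial f_0$, which is not enough. I would secure it from the explicit barycentric construction (cf.\ \cite[Lemma~3.4]{HIP02}): representing the edge datum as a homogeneous polynomial in the barycentric coordinates of $e=[\Va_i,\Va_j]$ and re-interpreting them as barycentric coordinates of $T$ retains the bubble factor $\lambda_i\lambda_j$, which vanishes precisely on the two faces opposite $\Va_i$ and $\Va_j$, that is, exactly those not containing $e$. Once this is in hand, all three assertions reduce to the mechanical stage-by-stage bookkeeping above, entirely analogous to the projector computations in the proofs of Lemmas~\ref{lem:prj0} and \ref{lem:prj1}.
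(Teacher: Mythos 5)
Your proof is correct and is precisely the argument the paper has in mind: the paper's own proof of Lemma~\ref{lem:comp} consists of the single sentence that the claim is ``immediate from the construction, in particular, the properties of the extension operators used therein.'' Your stage-by-stage bookkeeping --- and especially your observation that \eqref{eq:34} by itself is insufficient, so that one must invoke the barycentric construction of $\mathsf{E}^{0}_{e,p}$, whose bubble factor $\lambda_{i}\lambda_{j}$ kills the contributions of edges $e\not\subset f_{0}$ on the \emph{interior} of a face --- supplies exactly the suppressed details, and identifies the one property of the extensions on which the paper's one-line proof tacitly relies.
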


\begin{proof}
  The assertion is immediate from the construction, in particular, the properties
  of the extension operators used therein. 
\end{proof}

It goes without saying that density arguments permit us to extend $\Pi^{l}_{p}$,
$l=0,1,2$, to Sobolev spaces, as long as they are continuous in the respective
norms. (Repeated) application of trace theorems \cite[Sect.~1.5]{GRI92} reveals that
it is possible to obtain continuous projectors
\begin{align}
  \label{eq:139}
  \Pi^{0}_{p}&:\Hm{1+s}\mapsto \{v\in\Hone:\; v_{|T}\in\Cp_{p+1}(T)\;\forall T\in\mesh\}\;,\\
  \label{eq:141}
  \Pi^{1}_{p}&:\Hmv{\frac{1}{2}+s} \mapsto \Cw^{1}_{p}(\mesh)\;,\\
  \label{eq:142}
  \Pi^{2}_{p}&:\Hmv{s}\mapsto \Cw^{2}_{p}(\mesh)\;,
\end{align}
for any $s>\frac{1}{2}$. In addition, by virtue of Lemma~\ref{lem:comp}, zero
pointwise/tangential/normal trace on $\partial\Omega$ of the argument function will
be preserved by $\Pi^{l}_{p}$, $l=0,1,2$, for instance,
\begin{gather}
  \label{eq:146}
  \Pi^{1}_{p}(\Hmv{\frac{1}{2}+s}\cap\zbHcurl) = \Cw^{1}_{p}(\mesh)\cap\zbHcurl\;.
\end{gather}

\section{Interpolation error estimates}
\label{sec:interp-estim}

Closely following the ingenious approach in \cite[Section~6]{DEB04} we first examine
the interpolation error for $\Pi^{0}_{T,p}$. Please notice that $\Pi^{0}_{T,p}$
still depends on the fixed regularity parameter $0<\epsilon<\frac{1}{2}$. The argument 
function of $\Pi^{0}_{T,p}$ is assumed to lie in $\Hm[T]{1+s}$ for some
$s>\frac{1}{2}$, \textit{cf.} \eqref{eq:139}. The continuous embedding $\Hm[T]{1+s}\hookrightarrow
C^{0}(\overline{T})$ plus trace theorems for Sobolev spaces render all operators
well defined in this case.

We start with an observation related to the local best approximation properties of
the projection based interpolant.

\begin{lemma}
  \label{lem:51}
  For any $u\in\Hm[T]{1+s}$ holds
  \begin{align}
    \label{eq:62}
    \left(\grad(u-\Pi^{0}_{T,p}u),\grad v\right)_{\Ltwo[T]} & =  0 \quad \forall v\in
    \smash{\overset{\circ}{\Cp}}_{p+1}(T)\;,\\
    \label{eq:67}
    \left(\bcurl 
      {(u-\Pi^{0}_{T,p}u)}_{|f},\bcurl v\right)_{H^{-\frac{1}{2}+\epsilon}(f)} & =
    0 
    \quad\forall v\in \smash{\overset{\circ}{\Cp}}_{p+1}(f),\;f\in\Cf_{2}(T)\;,\\
    \label{eq:68}
    \left(\frac{d}{dl}{(u-\Pi^{0}_{T,p}u)}_{|e},\frac{d}{dl}
      v\right)_{H^{-1+\epsilon}(e)} & =  0 
    \quad\forall v\in \smash{\overset{\circ}{\Cp}}_{p+1}(e),\;e\in\Cf_{1}(T)\;.
  \end{align}
\end{lemma}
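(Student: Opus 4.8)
The plan is to exploit the telescoping structure of the construction \eqref{eq:54}--\eqref{eq:57} together with the defining orthogonality of the projections $\mathsf{P}^{1}_{T,p}$, $\mathsf{P}^{1}_{f,p}$, $\mathsf{P}^{1}_{e,p}$. Summing the stages of \eqref{eq:54}--\eqref{eq:57} gives $u = u^{(2)} + w^{(2)} + w^{(1)} + w^{(0)}$, so that
\begin{gather*}
  u - \Pi^{0}_{T,p}u = u^{(2)} - \mathsf{L}^{1}_{T,p}\mathsf{P}^{1}_{T,p}\grad u^{(2)}\;.
\end{gather*}
Each of the three assertions will follow once the relevant differential trace of $u-\Pi^{0}_{T,p}u$ is identified as the residual of an orthogonal projection: such a residual is orthogonal to the range of that projection, and the test objects $\grad v$, $\bcurl v$, $\frac{d}{dl}v$ run precisely through those ranges.

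For \eqref{eq:62} I would apply $\grad$ to the displayed identity. Since $\mathsf{P}^{1}_{T,p}\grad u^{(2)}$ lies in $\grad\smash{\overset{\circ}{\Cp}}_{p+1}(T)=\{\Vv\in\smash{\overset{\circ}{\Cw}}^{1}_{p}(T):\curl\Vv=0\}$, the lifting property $\grad\mathsf{L}^{1}_{T,p}\Vw=\Vw$ on this kernel yields $\grad\mathsf{L}^{1}_{T,p}\mathsf{P}^{1}_{T,p}\grad u^{(2)}=\mathsf{P}^{1}_{T,p}\grad u^{(2)}$, whence
\begin{gather*}
  \grad(u-\Pi^{0}_{T,p}u) = (\Id-\mathsf{P}^{1}_{T,p})\grad u^{(2)}\;.
\end{gather*}
As $\mathsf{P}^{1}_{T,p}$ is the $\Ltwo[T]$-orthogonal projection onto $\grad\smash{\overset{\circ}{\Cp}}_{p+1}(T)$, this residual is $\Ltwo[T]$-orthogonal to $\grad v$ for every $v\in\smash{\overset{\circ}{\Cp}}_{p+1}(T)$, which is exactly \eqref{eq:62}.

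For \eqref{eq:67} and \eqref{eq:68} I would first take traces onto a face $f$ and an edge $e$. The extension operators localize by \eqref{eq:34}--\eqref{eq:35}, and the interior lifting belongs to $\smash{\overset{\circ}{\Cp}}_{p+1}(T)$, hence vanishes on $\partial T$; restricting the displayed identity to $f$ therefore leaves only the face stage, giving ${(u-\Pi^{0}_{T,p}u)}_{|f}=u^{(1)}_{|f}-\mathsf{L}^{1}_{f,p}\mathsf{P}^{1}_{f,p}\bcurl(u^{(1)}_{|f})$, and restricting to $e$, on which the face term vanishes as well, gives ${(u-\Pi^{0}_{T,p}u)}_{|e}=u^{(0)}_{|e}-\mathsf{L}^{1}_{e,p}\mathsf{P}^{1}_{e,p}\frac{d}{ds}u^{(0)}_{|e}$. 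Applying $\bcurl$ and $\frac{d}{dl}$ respectively, and invoking the lifting identities $\bcurl\mathsf{L}^{1}_{f,p}=\Id$ and $\frac{d}{dl}\mathsf{L}^{1}_{e,p}=\Id$ on the relevant kernels, reproduces the projection residuals $(\Id-\mathsf{P}^{1}_{f,p})\bcurl u^{(1)}_{|f}$ and $(\Id-\mathsf{P}^{1}_{e,p})\frac{d}{ds}u^{(0)}_{|e}$. Since $\mathsf{P}^{1}_{f,p}$ and $\mathsf{P}^{1}_{e,p}$ are the orthogonal projections in $H^{-\frac{1}{2}+\epsilon}(f)$ and $H^{-1+\epsilon}(e)$ onto $\bcurl\smash{\overset{\circ}{\Cp}}_{p+1}(f)$ and $\frac{d}{dl}\smash{\overset{\circ}{\Cp}}_{p+1}(e)$, the claimed orthogonalities \eqref{eq:67} and \eqref{eq:68} follow at once.

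The projection algebra is routine; the part demanding care is the trace bookkeeping---verifying that on a facet $F$ the contributions of all stages associated with facets other than $F$, or of higher codimension within $\partial T$, either vanish or have already been absorbed, so that the trace reduces exactly to the lower-dimensional projection-based interpolant. This rests on the zero-trace properties \eqref{eq:34}--\eqref{eq:35} of the extensions and on Lemma~\ref{lem:comp}, and is the only place where the geometry of the construction genuinely enters.
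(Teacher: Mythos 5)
Your proof is correct and takes essentially the same route as the paper's: both exploit the telescoping identity $u-\Pi^{0}_{T,p}u = u^{(2)}-\mathsf{L}^{1}_{T,p}\mathsf{P}^{1}_{T,p}\grad u^{(2)}$, the right-inverse property of the liftings together with the zero-trace properties of the extensions and of the interior bubble to localize to each facet, and then read off each differential trace as a residual of the corresponding orthogonal projection. The only cosmetic difference is that the paper regroups the face residual as $\mathsf{P}^{1}_{f,p}\bcurl u_{|f}+(\Op{Id}-\mathsf{P}^{1}_{f,p})\bcurl(w^{(0)}+w^{(1)})_{|f}$ whereas you identify it directly as $(\Op{Id}-\mathsf{P}^{1}_{f,p})\bcurl u^{(1)}_{|f}$ (and analogously on edges); the substance is identical.
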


\begin{proof} We use the notations of \eqref{eq:54}-\eqref{eq:57}. Setting 
  $w:= w^{(0)}+w^{(1)}+w^{(2)}$, we find
  \begin{gather}
    \label{eq:64}
    \Pi^{0}_{T,p}u = \mathsf{L}_{T,p}^{1}\mathsf{P}_{T,p}^{1}\grad(u-w)+w\;,
  \end{gather}
  which implies, because $\mathsf{L}_{T,p}^{1}$ is a right inverse of $\frac{d}{dl}$,
  \begin{gather}
    \label{eq:65}
    \grad \Pi^{0}_{T,p}u = \mathsf{P}_{T,p}^{1}\grad u +
    (\mathsf{Id}-\mathsf{P}_{T,p}^{1})\grad w\;.
  \end{gather}
  This means that $u-\grad \Pi^{0}_{T,p}u$ belongs to the range of
  $\mathsf{Id}-\mathsf{P}_{T,p}^{1}$ and \eqref{eq:62} follows from \eqref{eq:23} and
  the properties of orthogonal projections. Similar manipulations establish
  \eqref{eq:67}:
  \begin{align}
    \label{eq:66}
    {\bcurl\Pi^{0}_{T,p}u}_{|f} & = 
    {\bcurl w}_{|f} \\
    \label{eq:69}
    & = \underbrace{\bcurl\mathsf{L}_{f,p}^{1}}_{=\mathsf{Id}}
    \mathsf{P}^{1}_{f,p}\bcurl u^{(1)} + 
    {\bcurl(w^{(0)}+w^{(1)})}_{|f} \\
    \label{eq:70}
    & = \mathsf{P}^{1}_{f,p}\bcurl {u}_{|f}+
    (\mathsf{Id}-\mathsf{P}^{1}_{f,p})\bcurl(w^{(0)}+w^{(1)})\quad\forall f\in\Cf_{2}(T)\;.
  \end{align}
  The same arguments as above verify \eqref{eq:68}.
\end{proof}

From this we can conclude the result of \cite[Section~6, Corollary~1]{DEB04}. To
state it we now assume a dependence 
\begin{gather}
  \label{eq:133}
  0<\epsilon = \epsilon(p) := \frac{1}{10\log(p+1)} < \frac{1}{4}\;,\quad
  p \in \bbN\;,
\end{gather}
of the parameter $\epsilon$ in the definition of the local projection based
interpolation operators. Below, all parameters $\epsilon$ are linked to $p$ via
\eqref{eq:133}.  Please note that we retain the notation
$\left(\Pi_{T,p}^{l}\right)_{p\in\bbN}$, $l=0,1,2$, for these new families of
operators. 

\begin{theorem}[Spectral interpolation error estimate for $\Pi^{0}_{T,p}$]
  \label{thm:deb04}
  There is a constant $C_{T}>0$ depending only on $T$ and $\frac{1}{2}<s\leq 1$, and,
  in particular, independent of $p$, such that
  \begin{gather}
    \label{eq:63}
    \SNHone[T]{(\Op{Id}-\Pi_{T,p}^{0})v} \leq C_{T} 
    \frac{\log^{3/2}p}{p^{s}}\SNHm[T]{v}{1+s}\quad
    \forall v\in \Hm[T]{1+s}\;,\quad p\geq 1\;.
  \end{gather}
\end{theorem}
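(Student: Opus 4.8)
The plan is to follow the strategy of \cite[Section~6]{DEB04} and turn the orthogonality relations of Lemma~\ref{lem:51} into a \emph{constrained best-approximation} estimate, then reduce everything to one genuine three-dimensional polynomial approximation plus two lower-dimensional (face and edge) approximations in the $\epsilon$-scaled fractional norms, and finally insert the calibrated choice \eqref{eq:133} of $\epsilon$. The relation \eqref{eq:62} says precisely that $\grad(u-\Pi^{0}_{T,p}u)$ is $\Ltwo[T]$-orthogonal to $\grad\smash{\overset{\circ}{\Cp}}_{p+1}(T)$; equivalently, $\Pi^{0}_{T,p}u$ minimizes $\SNHone[T]{u-q}$ over all $q\in\Cp_{p+1}(T)$ whose trace on $\partial T$ coincides with that of $\Pi^{0}_{T,p}u$. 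Hence it suffices to exhibit \emph{one} competitor polynomial with the correct boundary trace and to bound its $H^{1}$-seminorm distance to $u$.

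To build this competitor, I would first pick a spectral best approximation $q^{\star}\in\Cp_{p+1}(T)$ with $\SNHone[T]{u-q^{\star}}\leq C_{T}\,p^{-s}\SNHm[T]{u}{1+s}$, available from the polynomial approximation theory of \cite{MUN97a,BEM97b}. Its trace on $\partial T$ will in general differ from that of $\Pi^{0}_{T,p}u$, so I would correct it by lifting the mismatch $(\Pi^{0}_{T,p}u-q^{\star})_{|\partial T}$ back into $T$, hierarchically and facet by facet, with the extension operators $\mathsf{E}^{0}_{f,p}$ and $\mathsf{E}^{0}_{e,p}$, producing $\wt{q}:=q^{\star}+\mathsf{E}(\cdots)\in\Cp_{p+1}(T)$ with the prescribed boundary trace. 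The triangle inequality then gives
\begin{gather*}
  \SNHone[T]{u-\Pi^{0}_{T,p}u}\leq \SNHone[T]{u-q^{\star}}
  +\SNHone[T]{\mathsf{E}\bigl((\Pi^{0}_{T,p}u-q^{\star})_{|\partial T}\bigr)}\;,
\end{gather*}
where the first summand is already of the desired order $p^{-s}$, so the whole difficulty is concentrated in the extension term.

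The extension term is controlled by a trace norm of the boundary mismatch, and here the hierarchical structure \eqref{eq:54}--\eqref{eq:57} enters: on each face $f$ the relevant part of $\Pi^{0}_{T,p}u$ is fixed by $\mathsf{P}^{1}_{f,p}\bcurl u^{(1)}_{|f}$ and on each edge $e$ by $\mathsf{P}^{1}_{e,p}\frac{d}{dl}u^{(0)}_{|e}$. I would therefore split the mismatch face by face and edge by edge, and invoke the remaining relations \eqref{eq:67}, \eqref{eq:68} of Lemma~\ref{lem:51} to recognise these contributions as \emph{best approximations} of $\bcurl u_{|f}$ in $H^{-\frac12+\epsilon}(f)$ and of $\frac{d}{dl}u_{|e}$ in $H^{-1+\epsilon}(e)$. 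Each such best approximation is then estimated by a spectral (Jackson-type) estimate on the two-dimensional face and the one-dimensional edge, combined with trace theorems applied to $u\in\Hm[T]{1+s}$, while carefully tracking the $\epsilon$-dependent constants: the norm equivalences linking the $\epsilon$-scaled spaces $H^{-1+\epsilon}$, $H^{-\frac12+\epsilon}$ to the natural trace spaces contribute factors $\sim\epsilon^{-1/2}$, and the fractional-order spectral estimates contribute factors $\sim p^{c\epsilon}$.

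Finally, I would insert the calibrated value $\epsilon=\epsilon(p)=\tfrac{1}{10\log(p+1)}$ from \eqref{eq:133}. This choice is engineered so that every $p^{c\epsilon}=\exp(c\log p/(10\log(p+1)))$ stays uniformly bounded, while each $\epsilon^{-1/2}$ becomes $O(\log^{1/2}p)$. Accumulating one such factor from the edge level, one from the face level, and a further logarithmic factor from the propagation of the intermediate corrections $w^{(0)},w^{(1)},w^{(2)}$ through the successive stages produces the stated exponent $\log^{3/2}p$, and multiplying by the $p^{-s}$ rate yields \eqref{eq:63}. I expect the genuine obstacle to be exactly this last bookkeeping: establishing the fractional-norm polynomial approximation and stability estimates on faces and edges with \emph{fully explicit} dependence on both $\epsilon$ and $p$, and verifying that the blow-up as $\epsilon\to0$ is no worse than $\epsilon^{-1/2}$ per level, since an honest derivation of these estimates is the technical heart of \cite{DEB04} and cannot be shortcut.
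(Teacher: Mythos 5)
Your overall architecture coincides with the paper's own proof: Lemma~\ref{lem:51} is read as saying that $\Pi^{0}_{T,p}u$ is the $\SNHone[T]{\cdot}$-minimal element among all polynomials in $\Cp_{p+1}(T)$ sharing its boundary trace, a competitor with controlled boundary mismatch is produced, the mismatch is driven down the dimensional hierarchy (faces via \eqref{eq:67}, edges via \eqref{eq:68}) with all constants tracked in $\epsilon$, and finally $\epsilon(p)=\frac{1}{10\log(p+1)}$ is inserted. The paper packages the top level slightly differently --- it compares $\Pi^{0}_{T,p}u$ with the seminorm projection $\mathsf{Q}_{T,p}u$ of \eqref{eq:82} rather than with a generic best approximation $q^{\star}$ --- but that difference is cosmetic.

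The genuine problem is your choice of lifting operators. You propose to correct $q^{\star}$ by extending the boundary mismatch ``hierarchically and facet by facet, with the extension operators $\mathsf{E}^{0}_{f,p}$ and $\mathsf{E}^{0}_{e,p}$'', and your very next step requires $\SNHone[T]{\mathsf{E}(\cdots)}$ to be bounded by a trace norm of the mismatch. The operators of \eqref{eq:33}--\eqref{eq:36} are defined purely algebraically (via unisolvence and barycentric representations); no continuity uniform in $p$ is available for them, and the paper explicitly stresses that their continuity properties are \emph{immaterial} --- they are never used where stability matters. With them, your key inequality is unjustified and the argument collapses at exactly the step you concentrate ``the whole difficulty'' in. What is needed, and what the paper uses, are the \emph{stable, polynomial-preserving} extensions: $\mathsf{S}_{T}$ of Thm.~\ref{thm:MUN} on the tetrahedron and $\mathsf{S}_{F}$ of Thm.~\ref{thm:AID} on the faces. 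Note that since both $\Pi^{0}_{T,p}u$ and $q^{\star}$ are globally polynomial on $T$, their trace difference lies in ${\Cp_{p+1}(T)}_{|\partial T}$, so a \emph{single} application of $\mathsf{S}_{T}$ (no facet-by-facet lifting) already yields a degree-$(p+1)$ competitor with the prescribed trace and $\SNHone[T]{\mathsf{S}_{T}(\cdots)}\leq C\SNHh[\partial T]{\cdots}$ by \eqref{eq:73}, \eqref{eq:76}. The same repair recurs one level down: \eqref{eq:67} gives only a \emph{constrained} minimality on each face $f$ (the trace on $\partial f$ is pinned), so it is not a plain best-approximation statement; you need Thm.~\ref{thm:AID} there before Lemma~\ref{lem:Qepest} can finally be invoked on the edges. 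With these substitutions your bookkeeping is essentially the paper's: $\epsilon^{-1}$ from the face localization of Lemma~\ref{lem:hhptsplit}, $\epsilon^{-1/2}$ from the trace estimate \eqref{eq:130}, bounded factors $(p+1)^{c\epsilon}$, and hence the stated $\log^{3/2}p$.
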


Stable polynomial extensions are instrumental for the proof, which will be postponed
until Page~\pageref{pdeb04}. First, we recall the results of \cite[Thm.~1]{MUN97a}
and \cite[Thm.~1]{AID03}:

\begin{theorem}[Stable polynomial extension for tetrahedra]
  \label{thm:MUN}
  For a tetrahedron $T$ there is linear operator $\mathsf{S}_{T}:\Hh[\partial
  T]\mapsto \Hone[T]$ such that 
  \begin{align}
    \label{eq:74}
    {\mathsf{S}_{T}u}_{|\partial T} & = u  \quad\forall u\in \Hh[\partial T]\;,\\
    \label{eq:73}
    \SNHone[T]{\mathsf{S}_{T}u} & \leq C \SNHh[\partial T]{u}  \quad \forall u\in
    \Hh[\partial T]\;,\\
    \label{eq:76}
    \mathsf{S}_{T}w & \in \Cp_{p+1}(T)  \quad\forall w\in {\Cp_{p+1}(T)}_{|\partial T}\;,
  \end{align}
  where $C>0$ only depends on the shape regularity measure\footnote{The shape
    regularity measure of a tetrahedron is the ratio of the radii of its circumscribed
    sphere and the largest inscribed sphere.} of $T$.
\end{theorem}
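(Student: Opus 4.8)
The plan is to reduce everything to a fixed reference geometry and then build the extension hierarchically over the vertex--edge--face skeleton of the tetrahedron, obtaining polynomial preservation by construction and attacking uniform stability through careful fractional-norm estimates. First I would invoke affine equivalence: map $T$ to a fixed reference tetrahedron $\widehat{T}$. Since the affine map distorts the $\Hh[\partial T]$- and $\Hone[T]$-seminorms only by factors controlled by the shape-regularity measure, it suffices to construct $\mathsf{S}_{\widehat{T}}:\Hh[\partial\widehat{T}]\mapsto\Hone[\widehat{T}]$ with the three stated properties on $\widehat{T}$; the constant $C$ then inherits dependence on shape regularity only. Thus the whole problem collapses to one fixed geometry.

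Next I would decompose the boundary datum $u$ according to the skeleton, mirroring the stagewise structure already used for $\Pi^{0}_{T,p}$ in \eqref{eq:54}--\eqref{eq:57}. Set $u_{V}:=\sum_{i=1}^{4} u(\Ba_{i})\lambda_{i}$, the piecewise-linear vertex interpolant; subtracting this barycentric extension handles the vertex level and manifestly preserves polynomials, and $u-u_{V}$ vanishes at all vertices. On the resulting datum I would apply an edge lifting on each $e\in\Cf_{1}(T)$, producing a contribution whose trace reproduces the datum along $e$ and vanishes on the other edges, followed by a face lifting on each $f\in\Cf_{2}(T)$ for the remainder, which by then vanishes on $\partial f$. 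Each stage is assembled from a stable polynomial extension in the lower dimension (a one-dimensional extension along an edge, a two-dimensional triangle extension from $\partial f$ into $f$) composed with a lifting of that facet data into the interior of $\widehat{T}$.

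The building blocks should be given by explicit, degree-preserving formulas. For the lift of face data into the tetrahedron I would use an integral/averaging construction in the spirit of Mu\~noz-Sola, or equivalently a tensor-product extension written in collapsed (Duffy) coordinates; in either form the operator is linear and raises no polynomial degree, so \eqref{eq:76} holds because a degree $p+1$ polynomial trace is sent to a degree $p+1$ polynomial. The trace-reproduction property \eqref{eq:74} follows because every stage is designed to match its datum on the relevant facet while vanishing on lower-dimensional facets, so the contributions telescope to recover the full boundary trace.

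The hard part will be the \emph{uniform} $H^{1/2}$-to-$H^{1}$ stability bound \eqref{eq:73}, i.e. a constant $C$ independent of the polynomial degree. The difficulty is that the $\Hh[\partial T]$ Gagliardo seminorm is nonlocal, so the vertex/edge/face splitting cannot be controlled by naive localization. I would establish the bound by interpolating between endpoint ($\Ltwo$ and $\Hone$) estimates for the explicit polynomial-preserving extension and combining this with direct estimates of the double-integral seminorm, precisely the delicate analysis carried out in \cite{MUN97a} and \cite{AID03}. The payoff is that no logarithmic factor in $p$ enters at this stage; the clean constant $C$ is exactly what later permits the sharp estimate \eqref{eq:63}.
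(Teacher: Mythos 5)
The first thing to note is that the paper contains no proof of this theorem: it is recalled verbatim from the literature, namely \cite[Thm.~1]{MUN97a} (with \cite[Thm.~1]{AID03} supplying the triangle analogue, Thm.~\ref{thm:AID}), and all of the hard analysis lives in those references. Judged as a reconstruction of that missing proof, your proposal has one decisive gap: the only mathematically deep part of the statement, the degree-uniform bound \eqref{eq:73}, is in your write-up ``established'' by appealing to ``precisely the delicate analysis carried out in \cite{MUN97a} and \cite{AID03}''. Those theorems \emph{are} the statement being proved, so the argument is circular; what you have produced is a construction outline plus a citation --- which is, in substance, exactly what the paper itself does, but presented as if it were a proof. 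A self-contained argument would have to actually prove the endpoint estimates for the explicit lifting (e.g.\ $L^{2}(\partial T)\to H^{1/2}(T)$ and $H^{1}(\partial T)\to H^{3/2}(T)$, uniformly in the polynomial degree) before interpolation can be invoked, and nothing in the proposal indicates how.

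There is also a concrete defect in the construction you sketch. The operator must act on all of $H^{1/2}(\partial T)$, but your very first step, the vertex interpolant $\sum_{i}u(\Ba_{i})\lambda_{i}$, is undefined there: $H^{1/2}$ on a two-dimensional surface does not embed into $C^{0}$, so point values do not exist, and likewise restrictions of $u$ to edges are meaningless, since traces onto curves require regularity strictly above $H^{1/2}$. This is precisely why the paper applies the analogous stagewise skeleton construction \eqref{eq:54}--\eqref{eq:57} only to functions in $C^{\infty}(\overline{T})$ or $H^{1+s}(T)$ with $s>\frac{1}{2}$. Relatedly, splitting the nonlocal $H^{1/2}(\partial T)$ seminorm over faces is not free: Lemma~\ref{lem:hhptsplit} shows such a localization holds only for $H^{1/2+\epsilon}(\partial T)$ and with a constant that blows up like $1/\epsilon$. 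Any skeleton-based decomposition must confront both obstacles; the actual construction of \cite{MUN97a} avoids them by building the extension from explicit averaging integrals of face data (corrected face by face), never using pointwise vertex or edge values of the datum. So the architecture you propose would have to be modified before the stability question even arises.
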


\begin{theorem}[Stable polynomial extension for triangles]
  \label{thm:AID}
  Given a triangle $F$, there is a \emph{continuous} linear mapping $\mathsf{S}_{F}:\Ltwo[\partial
  F]\mapsto \Hh[T]$ such that
  \begin{align}
    \label{eq:75}
    \SNHone[F]{\mathsf{S}_{F}u} & \leq C \SNHh[\partial F]{u}\quad\forall u\in
    \Hh[\partial F]\;,\\
    \label{eq:77}
    \mathsf{S}_{F}w & \in \Cp_{p+1}(F)  \quad\forall w\in {\Cp_{p+1}(F)}_{|\partial F}\;,
  \end{align}
  where $C>0$  depends only on the shape regularity measure of $T$.
\end{theorem}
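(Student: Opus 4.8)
The plan is to reduce everything to a single reference triangle $\widehat F$ by an affine change of variables $\Phi:\widehat F\to F$, since the constant in \eqref{eq:75} is allowed to depend on the shape-regularity measure of $F$. Affine maps preserve the total polynomial degree, and, because the seminorms $\SNHone[F]{\cdot}$ and $\SNHh[\partial F]{\cdot}$ scale identically under dilation, the diameter factors cancel and the induced pullback is a bounded isomorphism $\Hone[F]\to\Hone[\widehat F]$ and $\Hh[\partial F]\to\Hh[\partial\widehat F]$ whose condition number is controlled solely by the shape-regularity measure. Hence it suffices to build one universal operator $\mathsf S_{\widehat F}:\Hh[\partial\widehat F]\to\Hone[\widehat F]$ that is a right inverse of the trace, is bounded with a universal constant (yielding \eqref{eq:75}), and maps boundary traces of $\Cp_{p+1}(\widehat F)$ into $\Cp_{p+1}(\widehat F)$ (yielding \eqref{eq:77}); transporting $\mathsf S_{\widehat F}$ through $\Phi$ then produces $\mathsf S_F$ with a constant of the required form.

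On $\widehat F$ I would assemble the lifting edge by edge, the delicate point being the three vertices. Because $H^{1/2}$ in one dimension just fails to embed into $C^{0}$, an element $u\in\Hh[\partial\widehat F]$ has no well-defined vertex values, so one cannot simply subtract a vertex interpolant. Instead I would split $u=u_1+u_2+u_3$ by multiplying with barycentric-coordinate cutoffs that localize to each edge together with its two endpoints. The intrinsic vertex-compatibility condition characterizing $\Hh[\partial\widehat F]$ (finiteness of the weighted integral of the squared trace difference across each vertex) is exactly what makes this splitting continuous, so that each localized piece, read on its carrying edge $e_i$, lies in $\widetilde H^{1/2}(e_i)$ with the sum of the norms bounded by $\NHh[\partial\widehat F]{u}$. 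Choosing the cutoffs to be products of barycentric coordinates keeps the degree under control, so that polynomial data of degree $\le p+1$ is sent to polynomial edge data of degree $\le p+1$.

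It then remains to construct, for each edge, a single-edge lifting $\mathsf E_i:\widetilde H^{1/2}(e_i)\to\Hone[\widehat F]$ that has zero trace on the other two edges, is bounded with a constant independent of $p$, and sends polynomials of degree $\le p+1$ on $e_i$ to polynomials of degree $\le p+1$ on $\widehat F$. I would realize $\mathsf E_i$ by an explicit integral (averaging) formula over $e_i$ of Maday/Babu\v{s}ka--Suri type, with a kernel arranged both to reproduce polynomials and to vanish on the opposite edges; the $p$-uniform bound $\SNHone[\widehat F]{\mathsf E_i g}\le C\,\N{g}_{\widetilde{H}^{1/2}(e_i)}$ is then verified by a direct Fourier/weighted-norm computation on the fixed reference geometry. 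Setting $\mathsf S_{\widehat F}u:=\sum_i\mathsf E_i u_i$ gives an operator that reproduces $u$ on $\partial\widehat F$, preserves polynomial degree, and is bounded, which after the affine transport completes the argument.

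The main obstacle is precisely the simultaneous demand on the single-edge lifting that it be both degree-preserving and $p$-uniformly continuous from $\widetilde H^{1/2}$ into $H^{1}$: a generic stable extension (for instance the harmonic extension furnished by the trace theorem) is $p$-independent but destroys the polynomial structure, whereas naive polynomial interpolation preserves the degree but loses uniformity in $p$. Reconciling the two, together with arranging the stable $\widetilde H^{1/2}$-decomposition at the vertices, is the entire substance of the statement; this is carried out in full detail in \cite{MUN97a} and \cite{AID03}, to which we refer for the technical estimates.
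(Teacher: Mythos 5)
The first thing to note is that the paper itself offers no proof of this theorem: it is recalled verbatim from \cite[Thm.~1]{AID03} (as Thm.~\ref{thm:MUN} is from \cite[Thm.~1]{MUN97a}), and your closing paragraph ultimately defers ``the entire substance'' to those same references, so in the end you and the paper land in the same place. The problem is that the sketch you wrap around that citation is not a correct outline of how those references argue, and it fails at exactly the point you identify as delicate. Your simultaneous vertex splitting cannot work: no family of cutoffs $\varphi_i$ with $\sum_i \varphi_i \equiv 1$ on $\partial F$ can produce pieces $u_i = \varphi_i u$ lying in $\widetilde{H}^{1/2}(e_i) = H^{1/2}_{00}(e_i)$. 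Take $u\equiv 1$, so that $u_i=\varphi_i$: at a vertex $\Bv$ shared by $e_i$ and $e_j$ one has $\varphi_i(\Bv)+\varphi_j(\Bv)=1$, hence, say, $\varphi_i(\Bv)\geq\tfrac12$, and then $\int_{e_i} |\varphi_i(\Bx)|^2\,|\Bx-\Bv|^{-1}\,\mathrm{d}l = \infty$, so $\varphi_i\notin\widetilde{H}^{1/2}(e_i)$. The vertex-compatibility condition you invoke controls the \emph{difference} of $u$ across a vertex, not its value, so it cannot rescue a splitting into single-edge pieces with the required endpoint decay; constants already show such a splitting is impossible. This is precisely why \cite{MUN97a} and \cite{AID03} (and the Babu\v{s}ka--Suri constructions they refine) proceed \emph{sequentially}: lift the data of one edge by an averaging operator that needs no endpoint vanishing, subtract its trace, and only the \emph{remainder}, which then vanishes identically on the edge already handled, acquires the $\widetilde{H}^{1/2}$-structure needed at the shared vertices for the subsequent liftings.

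Your degree bookkeeping also breaks where it matters. Multiplying by barycentric-product cutoffs raises polynomial degree, so boundary traces of $\Cp_{p+1}(F)$ become edge data of degree $p+3$, and the assembled lifting of $w\in{\Cp_{p+1}(F)}_{|\partial F}$ lands in $\Cp_{p+3}(F)$, not $\Cp_{p+1}(F)$. Since \eqref{eq:77} must hold for every $p$ with a single fixed operator $\mathsf{S}_F$, this is a strictly weaker property, and it is insufficient for the way the theorem is actually used in this paper: in the proof of Thm.~\ref{thm:deb04}, estimate \eqref{eq:88} compares ${(\Pi_{T,p}^{0}u)}_{|f}-\mathsf{Q}_{f,p}u$, which is the minimal-seminorm extension of its own boundary trace \emph{within} $\Cp_{p+1}(f)$, against $\mathsf{S}_f$ applied to that trace; this comparison requires $\mathsf{S}_f$ to return an element of $\Cp_{p+1}(f)$ exactly. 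Your affine reduction to a reference triangle is fine (both seminorms are indeed scale-invariant in these dimensions), but the two pillars of your construction --- the cutoff decomposition into $\widetilde{H}^{1/2}$ edge data and the exact degree preservation --- both fail as stated, and repairing them leads back to the sequential, explicitly polynomial-preserving constructions of \cite{MUN97a,AID03} rather than to a genuinely independent proof.
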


By interpolation in Sobolev scale from the last theorem we can conclude 
\begin{gather}
  \label{eq:78}
  \exists C>0:\quad \SNHm[F]{\mathsf{S}_{F}u}{s} \leq 
  C \SNHm[\partial F]{u}{s-\frac{1}{2}}\quad\forall u\in \Hm[\partial
  F]{s-\frac{1}{2}},\;
  \frac{1}{2}\leq s \leq 1\;.
\end{gather}

We also need to deal with the awkward property of the $\Hh[\partial T]$-norm
that it cannot be localized to faces. To that end we resort to a result from 
\cite[Proof of Lemma 3.31]{MCL00}, see also \cite[Lemma~13]{DEB04}.

\newcommand{\Hhpt}{\Hh[\partial T]}
\begin{lemma}[Splitting of $\Hhpt$-norm]
  \label{lem:hhptsplit}
  The exists $C>0$ depending only on the shape regularity of the tetrahedron $T$ such
  that
  \begin{gather}
    \label{eq:79}
    \SNHm[\partial T]{u}{\frac{1}{2}+\epsilon} \leq \frac{C}{\epsilon}
    \sum\limits_{f\in\Cf_{2}(T)} \SNHm[f]{u}{\frac{1}{2}+s}\quad \forall u\in\Hm[\partial
    T]{\frac{1}{2}+\epsilon},\;
    0<\epsilon\leq \frac{1}{2}\;.
  \end{gather}
\end{lemma}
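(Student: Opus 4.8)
The plan is to work directly from the intrinsic Sobolev--Slobodeckij form of the seminorm. Abbreviating $\sigma:=\tfrac12+\epsilon\in(\tfrac12,1)$ and using that $\partial T$ is a two-dimensional Lipschitz surface, I would start from
\[
  \SNHm[\partial T]{u}{\sigma}^{2}=\int_{\partial T}\!\int_{\partial T}
  \frac{|u(\Bx)-u(\By)|^{2}}{|\Bx-\By|^{2+2\sigma}}\,\mathrm{d}S(\Bx)\,\mathrm{d}S(\By)\;,
\]
and split the domain $\partial T\times\partial T$ into the finitely many blocks $f\times f'$, $f,f'\in\Cf_{2}(T)$. For a \emph{diagonal} block $f=f'$ the face is flat, so the ambient distance $|\Bx-\By|$ equals the intrinsic distance inside $f$ and the block contributes exactly $\SNHm[f]{u}{\sigma}^{2}$. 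Hence the whole difficulty is concentrated in a single generic \emph{cross term} between two faces $f\neq f'$ sharing the common edge $e=f\cap f'$.

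For such a pair I would remove the bend at $e$ by \emph{unfolding}: rotate $f'$ about the line through $e$ into the plane of $f$, producing a coplanar triangle $\widetilde{f'}$ and the transported copy $\widetilde{u}$ of $u_{|f'}$. Since $\mesh$ is shape regular, all dihedral angles of $T$ are bounded away from $0$ and $\pi$, and an elementary law-of-cosines computation in the plane orthogonal to $e$ then yields $c\,|\Bx-\widetilde{\By}|\le|\Bx-\By|\le|\Bx-\widetilde{\By}|$ for $\Bx\in f$, $\By\in f'$ with unfolded image $\widetilde{\By}$, where $c>0$ depends only on the shape regularity of $T$. Feeding the lower bound into the kernel converts the cross term into the \emph{planar} integral $\int_{f}\int_{\widetilde{f'}}|w(\Bx)-w(\By)|^{2}|\Bx-\By|^{-2-2\sigma}$, where $w$ equals $u$ on $f$ and $\widetilde{u}$ on $\widetilde{f'}$. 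Because $\sigma>\tfrac12$, the two one-sided edge traces of $u$ coincide, so $w$ is a genuine function on the flat quadrilateral $f\cup\widetilde{f'}$. This is the only place shape regularity enters, and it reduces the curved tetrahedral problem to a flat two-triangle problem.

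For the flat cross term I would insert the common edge trace $g:=u_{|e}=w_{|e}$, which lies in $\Hm[e]{\epsilon}$ precisely because $\sigma-\tfrac12=\epsilon>0$. For $\Bx\in f$, $\By\in\widetilde{f'}$ the segment $[\Bx,\By]$ meets $e$ in a point $\Bz=\Bz(\Bx,\By)$, and $|w(\Bx)-w(\By)|^{2}\le 2|w(\Bx)-g(\Bz)|^{2}+2|g(\Bz)-w(\By)|^{2}$ together with $|\Bx-\By|\ge\max(|\Bx-\Bz|,|\Bz-\By|)$ splits the term into two symmetric pieces. Integrating out the ``far'' variable lowers the codimension by one and leaves a trace-type quantity of the form $\int_{f}\int_{e}|w(\Bx)-g(\Bz)|^{2}\,|\Bx-\Bz|^{-2-2\epsilon}\,\mathrm{d}\ell(\Bz)\,\mathrm{d}S(\Bx)$. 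Carrying this out---exactly the computation behind \cite[Proof of Lemma~3.31]{MCL00} and \cite[Lemma~13]{DEB04}---gives, for every off-diagonal pair,
\[
  \int_{f}\int_{f'}\frac{|u(\Bx)-u(\By)|^{2}}{|\Bx-\By|^{2+2\sigma}}\,\mathrm{d}S(\Bx)\,\mathrm{d}S(\By)
  \le \frac{C}{\epsilon}\Bigl(\SNHm[f]{u}{\sigma}^{2}+\SNHm[f']{u}{\sigma}^{2}\Bigr)\;.
\]
Summing over the finitely many face pairs and absorbing the diagonal blocks yields $\SNHm[\partial T]{u}{\sigma}^{2}\le \tfrac{C}{\epsilon}\sum_{f}\SNHm[f]{u}{\sigma}^{2}$; taking square roots and using $\epsilon^{-1/2}\le\epsilon^{-1}$ for $\epsilon\le\tfrac12$ gives the asserted bound with the factor $C/\epsilon$.

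The main obstacle is the \emph{sharp tracking of the constant} as $\epsilon\to0^{+}$. The passage from the interface trace quantity back to the face seminorm is a fractional Hardy inequality on the half-plane, $\int_{\bbR^{2}_{+}}x_{2}^{-2\sigma}|v(\Bx)|^{2}\,\DX\le C(\sigma)\,\SNHm[\bbR^{2}_{+}]{v}{\sigma}^{2}$ for $v$ vanishing on $e$, valid exactly for $\sigma\in(\tfrac12,1)$ and degenerating like $C(\sigma)\sim(2\sigma-1)^{-1}=(2\epsilon)^{-1}$ as $\sigma\to\tfrac12^{+}$; equivalently the Beta-function factor $B(\tfrac12,\sigma-\tfrac12)=\sqrt{\pi}\,\Gamma(\epsilon)/\Gamma(\sigma)$ arising in the corresponding Fourier trace estimate blows up like $\Gamma(\epsilon)\sim\epsilon^{-1}$. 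This is nothing but the criticality of the trace map $\Hm[f]{\sigma}\to\Hm[e]{\epsilon}$ at the lower endpoint $\sigma=\tfrac12$, and it is precisely what produces the factor $C/\epsilon$ in the statement. Every other ingredient---the finite face-pair splitting, the unfolding distance comparison, and the finiteness of $\Cf_{2}(T)$---contributes only $p$-independent constants depending solely on the shape regularity of $T$.
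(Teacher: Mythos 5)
The paper itself offers no proof of this lemma: it is imported wholesale from the proof of Lemma~3.31 in \cite{MCL00} (see also Lemma~13 of \cite{DEB04}), so your argument must be judged on its own merits. Your overall route --- reading the right-hand side of \eqref{eq:79} with exponent $\tfrac12+\epsilon$ (clearly the intended meaning of the paper's ``$\tfrac12+s$''), splitting the Sobolev--Slobodeckij double integral into face--face blocks, unfolding an adjacent face pair about the common edge with a shape-regularity-controlled distance equivalence, inserting the common edge trace, and attributing the $1/\epsilon$ blow-up to the degeneration of the fractional Hardy inequality and of the trace map $\Hm[f]{\frac{1}{2}+\epsilon}\to\Hm[e]{\epsilon}$ at $\epsilon=0$ --- is exactly the mechanism behind the cited proofs. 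The diagonal blocks, the two-sided unfolding estimate, the codimension-reduction integration, and your endpoint analysis in the last paragraph are all correct (compare \eqref{eq:130}, which the paper quotes with constant $C/\sqrt{\epsilon}$).

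There is, however, one genuinely false step: the claim that for $\Bx\in f$, $\By\in\widetilde{f'}$ the segment $[\Bx,\By]$ meets $e$. This holds for all pairs if and only if the unfolded quadrilateral $f\cup\widetilde{f'}$ is convex, i.e.\ iff at each endpoint of $e$ the two face angles sum to at most $\pi$ --- and that can fail for perfectly shape-regular tetrahedra. Concretely, take $\Ba=(0,0,0)$, $\Bb=(1,0,0)$, $\Bc=(\cos 100^{\circ},\sin 100^{\circ},0)$, $\Bd=(\cos 100^{\circ},0,\sin 100^{\circ})$: both faces meeting at $e=[\Ba,\Bb]$ have angle $100^{\circ}$ at $\Ba$, the unfolded union is reflex at $\Ba$, and for $\Bx$ near the edge $[\Ba,\Bc]$ and $\By$ near $[\Ba,\Bd]$ (in unfolded position) one checks that the segment crosses the line through $e$ at negative abscissa, outside $e$, precisely when the two polar angles at $\Ba$ satisfy $\theta_{1}+\theta_{2}>\pi$; for such pairs your $\Bz(\Bx,\By)$ and $g(\Bz)$ are undefined. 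The obvious repair --- clamping $\Bz$ to the endpoint $\Ba$ --- also fails, because it produces the point value $u(\Ba)$, which is meaningless for $u\in\Hm[f]{\frac{1}{2}+\epsilon}$ (no embedding into $C^{0}$ in two dimensions). A correct patch exists: for these ``reflex-sector'' pairs the angle between $\Bx-\Ba$ and $\By-\Ba$ is bounded below by the reflex gap, whence $|\Bx-\By|\geq c\,(|\Bx-\Ba|+|\By-\Ba|)$ with $c$ depending only on the face angles; one then inserts not a single value of $g$ but its average over the subarc $\{\Bz\in e:\;|\Bz-\Ba|\leq|\Bx-\Ba|\}$, and the same edge--face quantity $\int_{f}\int_{e}|u(\Bx)-g(\Bz)|^{2}\,|\Bx-\Bz|^{-2-2\epsilon}\,\mathrm{d}\ell(\Bz)\,\mathrm{d}S(\Bx)$ reappears with harmless constants. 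With this case distinction added your proof goes through; as written, the key cross-term estimate breaks down on an open set of admissible tetrahedra.
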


Another natural ingredient for the proof are polynomial best approximation
estimates, see \cite{SAB98} or \cite[Sect.~3]{MUN97a}.

\begin{lemma}
  \label{lem:Tpolbest}
  Let $F$ be either a tetrahedron or a triangle. Then, there is a constant $C>0$ depending
  only on $F$ such that for all $p\geq 1$
  \begin{gather}
    \label{eq:80}
    \inf\limits_{v_{p}\in \Cp_{p+1}(F)} \SNHm[F]{u-v_{p}}{r} \leq Cp^{r-1-s}
    \SNHm[F]{u}{1+s}\quad\forall u\in \Hm[F]{1+s},\; 0\leq r \leq 1\;.
  \end{gather}
\end{lemma}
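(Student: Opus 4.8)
The plan is to reduce to a fixed reference simplex, invoke a single linear projection that is simultaneously quasi-optimal in the endpoint norms $\Ltwo{}$ and $\Hone{}$, and then fill in the intermediate orders $0<r<1$ by operator interpolation in the Sobolev scale, converting back to seminorms at the very end by a Poincar\'e-type normalization.

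First I would pass to a fixed reference element $\wh{F}$ (the unit triangle, resp.\ tetrahedron) through the affine map $\Phi_F:\wh{F}\to F$. Pulling back via $\wh{u}:=u\circ\Phi_F$, each seminorm $\SNHm[F]{\cdot}{t}$ turns into $\SNHm[\wh{F}]{\cdot}{t}$ up to fixed powers of the (constant) Jacobian of $\Phi_F$; since $F$ is fixed these factors are absorbed into $C_F$. As $\Phi_F$ maps $\Cp_{p+1}(\wh{F})$ bijectively onto $\Cp_{p+1}(F)$, it suffices to prove the estimate on $\wh{F}$ with a constant depending only on $\wh{F}$. Second, I would record the two endpoint spectral estimates. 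These are exactly the classical $p$-version best-approximation results on simplices from \cite[Sect.~3]{MUN97a} and \cite{SAB98}, which provide one and the same linear operator $\wh\Pi_p:\Hm[\wh{F}]{1+s}\mapsto\Cp_{p+1}(\wh{F})$ with
\begin{gather*}
  \NHm[\wh{F}]{w-\wh\Pi_p w}{0}\le C\,p^{-1-s}\NHm[\wh{F}]{w}{1+s}\;,\qquad
  \NHm[\wh{F}]{w-\wh\Pi_p w}{1}\le C\,p^{-s}\NHm[\wh{F}]{w}{1+s}\;,
\end{gather*}
for all $w\in\Hm[\wh{F}]{1+s}$. Such an operator arises by truncating an expansion in the Koornwinder/Jacobi orthogonal polynomials adapted to the simplex through collapsed coordinates, or, equivalently, as the $\Hone{}$-projection supplemented by an Aubin--Nitsche duality argument on the convex domain $\wh{F}$ for the $\Ltwo{}$ bound. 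I regard securing these two bounds \emph{for a single linear map} as the main obstacle, since the next step interpolates the fixed operator between both endpoint pairs; this is precisely the substance delegated to the cited works.

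Third, I would interpolate. Writing $\Op{S}_p:=\Op{Id}-\wh\Pi_p$, the two displays say that $\Op{S}_p$ is bounded $\Hm[\wh{F}]{1+s}\to\Ltwo[\wh{F}]$ with norm $\le Cp^{-1-s}$ and $\Hm[\wh{F}]{1+s}\to\Hone[\wh{F}]$ with norm $\le Cp^{-s}$. Since $\wh{F}$ is Lipschitz, $(\Ltwo[\wh{F}],\Hone[\wh{F}])_{r,2}=\Hm[\wh{F}]{r}$ with equivalent norms for $0<r<1$, so the interpolation property of the real method yields
\begin{gather*}
  \NHm[\wh{F}]{\Op{S}_p w}{r}\le C\,(p^{-1-s})^{1-r}(p^{-s})^{r}\NHm[\wh{F}]{w}{1+s}
  = C\,p^{\,r-1-s}\NHm[\wh{F}]{w}{1+s}\;,\quad 0\le r\le 1\;,
\end{gather*}
the cases $r\in\{0,1\}$ being the endpoints themselves; the exponent check is $(-1-s)(1-r)-sr=r-1-s$.

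Finally I would convert this full-norm bound into the claimed seminorm bound. Because $1<1+s\le 2$, the kernel of $\SNHm[\wh{F}]{\cdot}{1+s}$ is $\Cp_{1}(\wh{F})$, so the Deny--Lions/Bramble--Hilbert lemma supplies $\ell\in\Cp_{1}(\wh{F})$ with $\NHm[\wh{F}]{\wh{u}-\ell}{1+s}\le C\,\SNHm[\wh{F}]{\wh{u}}{1+s}$. Applying the preceding display to $w:=\wh{u}-\ell$ and taking $v_p:=\wh\Pi_p w+\ell\in\Cp_{p+1}(\wh{F})$ (here $p+1\ge 2$ is used) gives
\begin{gather*}
  \SNHm[\wh{F}]{\wh{u}-v_p}{r}\le\NHm[\wh{F}]{w-\wh\Pi_p w}{r}
  \le C\,p^{\,r-1-s}\NHm[\wh{F}]{w}{1+s}\le C\,p^{\,r-1-s}\SNHm[\wh{F}]{\wh{u}}{1+s}\;,
\end{gather*}
and transferring back through $\Phi_F$ completes the argument. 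The only genuinely delicate ingredient is the simultaneous endpoint quasi-optimality quoted in the second paragraph; everything else is affine scaling, the standard identification $(\Ltwo{},\Hone{})_{r,2}=\Hm{r}$, and a Poincar\'e normalization.
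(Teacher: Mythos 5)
The paper never proves this lemma: it is quoted as a known polynomial best-approximation result, with the proof delegated wholesale to \cite{SAB98} and \cite[Sect.~3]{MUN97a}. Your argument is a correct, essentially self-contained reconstruction of the standard proof underlying those citations, so it supplies what the paper omits rather than diverging from it. All four steps are sound: the affine reduction is legitimate because $F$ is fixed, so the Jacobian factors (also for the fractional Slobodeckij seminorms) are absorbed into $C_F$; the existence of a \emph{single} linear operator $\widehat{\Pi}_p$ that is simultaneously quasi-optimal in $L^2$ and $H^1$ is exactly what the Babu\v{s}ka--Suri-type constructions in the works the paper cites provide, and you correctly identify this as the only deep ingredient and delegate it to the same sources; interpolating the fixed operator $\mathsf{Id}-\widehat{\Pi}_p$ between the two target spaces with the source space held fixed is a valid use of the exactness of the real method, and the exponent bookkeeping $(-1-s)(1-r)-sr=r-1-s$ checks out; and the Deny--Lions normalization is the right way to replace the full norm by the seminorm, since the kernel of $\left|\cdot\right|_{H^{1+s}(F)}$ for $1<1+s\le 2$ is indeed $\mathcal{P}_1(F)$ and $v_p=\widehat{\Pi}_p w+\ell$ remains in $\mathcal{P}_{p+1}(F)$, independently of $r$.

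One fine point you pass over: the lemma asserts one constant $C$ valid for all $0\le r\le 1$, whereas your argument yields $C(r)$ through the equivalence $H^r(\widehat F)\simeq\bigl(L^2(\widehat F),H^1(\widehat F)\bigr)_{r,2}$, whose constants, with the standard Slobodeckij and $K$-functional normalizations, degenerate as $r\to 0^+$ and $r\to 1^-$. This is harmless here: the endpoints are covered directly by your two endpoint estimates, and the paper only invokes the lemma at $r=1$ and at $r=\tfrac12+\epsilon$ with $\epsilon$ small (see \eqref{eq:144}, \eqref{eq:145}), i.e.\ for $r$ in a compact subset of $(0,1)$ together with an endpoint, where the equivalence constants are uniform. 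A similar mild caveat concerns dependence of $C$ on $s$ through the endpoint estimates, a dependence the paper itself tolerates in Thm.~\ref{thm:deb04}. Neither issue is a gap in substance, but a fully rigorous statement uniform over all $r$ would require suitably normalized fractional norms or a restriction on $r$.
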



Define a semi-norm projection $\mathsf{Q}_{T,p}:\Hone[T]\mapsto \Cp_{p+1}(T)$ on the
tetrahedron $T$ by
\begin{gather}
  \label{eq:82}
  \begin{aligned}
    & \int\nolimits_{T}\grad(u-\mathsf{Q}_{T,p}u)\cdot\grad v_{p}\,\mathrm{d}\Bx = 0
    \quad\forall
    v_{p}\in\Cp_{p+1}(T)\;,\\
    & \int\nolimits_{T}u-\mathsf{Q}_{T,p}u\,\mathrm{d}\Bx = 0 \;,
  \end{aligned}
\end{gather}
and semi-norm projections
$\mathsf{Q}_{f,p}:\Hm[f]{s+\frac{1}{2}}\mapsto\Cp_{p+1}(f)$, $f\in\Cf_{2}(T)$, by
\begin{gather}
  \label{eq:87}
  \begin{aligned}
    & \left(\bcurl(u-\mathsf{Q}_{f,p}u),\bcurl v_{p}\right)_{\Hm[f]{\epsilon-\frac{1}{2}}} = 0
    \quad\forall
    v_{p}\in\Cp_{p+1}(T)\;,\\
    & \int\nolimits_{f}u-\mathsf{Q}_{f,p}u\,\mathrm{d}\Bx = 0 \;.
  \end{aligned}
\end{gather}
These definitions involve best approximation properties of $\mathsf{Q}_{T,p}u$ and
$\mathsf{Q}_{f,p}u$. Thus, we learn from Lemma~\ref{lem:Tpolbest} that with
constants independent of $0<\epsilon<\frac{1}{2}<s\leq 1$
\begin{align}
  \label{eq:144}
  \SNHone[T]{u-\mathsf{Q}_{T,p}u} & \leq C (p+1)^{-s}\SNHm[T]{u}{1+s}\quad
  \forall u\in\Hm[T]{s}\;,\\
  \label{eq:145}
  \SNHm[f]{u-\mathsf{Q}_{f,p}u}{\frac{1}{2}+\epsilon} & \leq C (p+1)^{\epsilon-s}
  \SNHm[T]{u}{\frac{1}{2}+s}\quad\forall u\in\Hm[f]{\frac{1}{2}+s}\;.
\end{align}
The latter estimate follows from the fact that
$\SNHm[f]{\cdot}{\frac{1}{2}+\epsilon}$ and
$\NHm[f]{\bcurl\cdot}{-\frac{1}{2}+\epsilon}$ are equivalent  semi-norms, uniformly in
$\epsilon$.

We also need error estimates for the $\Ltwo[e]$-orthogonal projections,
\begin{gather}
  \label{eq:89}
  \mathsf{Q}^{\ast}_{e,p}:\Ltwo[e] \mapsto \smash{\overset{\circ}{\Cp}}_{p+1}(e)\;,\quad
  e\in\Cf_{1}(T)\;.
\end{gather}
\newcommand{\debl}{\cite[Lemma~18]{DEB04}}
\begin{lemma}[see \debl]
  \label{lem:Qepest}
  With a constant $C>0$ independent of $p$, $0\leq \epsilon\leq \frac{1}{2}$, and 
  $2\epsilon \leq r \leq 1+\epsilon$
  \begin{gather*}
    \SNHm[e]{e-\mathsf{Q}^{\ast}_{e,p}u}{\epsilon} \leq C(p+1)^{2\epsilon-r}\SNHm[e]{u}{r}\quad
    \forall u\in \Hm[e]{r}\cap\zbHone[e]\;.
  \end{gather*}
\end{lemma}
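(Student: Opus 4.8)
The plan is to combine a polynomial inverse inequality with simultaneous $hp$ best-approximation estimates, exploiting that $\mathsf{Q}^{\ast}_{e,p}$ is an $\Ltwo[e]$-orthogonal projection. By affine equivalence it suffices to argue on a reference edge, the constant absorbing the (fixed) edge length, and I work throughout with the intrinsic Gagliardo seminorms, tracking the dependence on $\epsilon$ explicitly. The first ingredient is an inverse inequality: for $\phi\in\Cp_{p+1}(e)$ and $0\le\epsilon\le\tfrac12$, $\SNHm[e]{\phi}{\epsilon}\le C(p+1)^{2\epsilon}\NLtwo[e]{\phi}$ with $C$ independent of $p$ and $\epsilon$. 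This follows from Markov's inequality $\SNHone[e]{\phi}\le C(p+1)^{2}\NLtwo[e]{\phi}$ by interpolation between $\Ltwo[e]$ and $\Hone[e]$, the delicate point being that the equivalence constants between the interpolation norm and the Gagliardo seminorm stay bounded for $\epsilon\in[0,\tfrac12]$.

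Since $\mathsf{Q}^{\ast}_{e,p}$ is the $\Ltwo[e]$-orthogonal projection onto $\smash{\overset{\circ}{\Cp}}_{p+1}(e)$, it is an $\Ltwo[e]$-contraction and realizes the best $\Ltwo[e]$ approximation from $\smash{\overset{\circ}{\Cp}}_{p+1}(e)$. Hence, for an arbitrary (not necessarily boundary-vanishing) $v_p\in\Cp_{p+1}(e)$, the difference $\mathsf{Q}^{\ast}_{e,p}u-v_p$ is a polynomial of degree $\le p+1$, and combining the triangle inequality with the inverse inequality yields the core estimate
$$\SNHm[e]{u-\mathsf{Q}^{\ast}_{e,p}u}{\epsilon}\le \SNHm[e]{u-v_p}{\epsilon}+C(p+1)^{2\epsilon}\bigl(\NLtwo[e]{u-v_p}+\NLtwo[e]{u-\mathsf{Q}^{\ast}_{e,p}u}\bigr).$$

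It remains to feed in two approximation estimates. First, a general simultaneous estimate: one selects $v_p\in\Cp_{p+1}(e)$ with $\NLtwo[e]{u-v_p}\le C(p+1)^{-r}\SNHm[e]{u}{r}$ and $\SNHm[e]{u-v_p}{\epsilon}\le C(p+1)^{\epsilon-r}\SNHm[e]{u}{r}$ (licit because $\epsilon\le 2\epsilon\le r$), these being the classical spectral approximation estimates on the interval. Second, a Dirichlet-preserving $\Ltwo[e]$ estimate for the best-approximation term, namely $\NLtwo[e]{u-\mathsf{Q}^{\ast}_{e,p}u}=\inf_{b\in\smash{\overset{\circ}{\Cp}}_{p+1}(e)}\NLtwo[e]{u-b}\le C(p+1)^{-r}\SNHm[e]{u}{r}$ for $u\in\Hm[e]{r}\cap\zbHone[e]$. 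Inserting both into the core estimate and using $(p+1)^{\epsilon-r}\le(p+1)^{2\epsilon-r}$ collapses the right-hand side to $C(p+1)^{2\epsilon-r}\SNHm[e]{u}{r}$, which is the assertion.

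The main obstacle is the Dirichlet-preserving $\Ltwo[e]$ approximation with a constant uniform in both $p$ and $\epsilon$ across the whole range $2\epsilon\le r\le 1+\epsilon$ — in particular the sub-threshold regime $r<\tfrac12$, where point values are not controlled, so the naive device of approximating freely and then correcting the endpoint values by a linear function is unavailable. I would instead expand $u$ in the integrated-Legendre bubble basis, which is orthogonal in the $\Hone[e]$-seminorm and whose members all vanish at both endpoints, so that truncation automatically lands in $\smash{\overset{\circ}{\Cp}}_{p+1}(e)$; the tail is then bounded by a Parseval computation using the known growth of the $\Hm[e]{s}$-seminorms of these bubbles, which covers all $r\ge 0$. (For $r\ge 1$ one may alternatively take the primitive of the $\Ltwo[e]$-projection of $u'$ and gain one power of $p$ via an Aubin--Nitsche duality argument.) Equally delicate, and the very reason the statement advertises $\epsilon$-independence, is keeping every constant free of $\epsilon$ throughout — contrast the $\tfrac1\epsilon$ appearing in Lemma~\ref{lem:hhptsplit} — which forces one to argue with the intrinsic seminorms directly rather than through abstract interpolation, whose equivalence constants degenerate as $\epsilon\to0$.
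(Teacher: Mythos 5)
Your reduction is sound as far as it goes: the splitting $u-\mathsf{Q}^{\ast}_{e,p}u=(u-v_{p})+(v_{p}-\mathsf{Q}^{\ast}_{e,p}u)$ with an arbitrary $v_{p}\in\Cp_{p+1}(e)$, followed by an inverse inequality on the polynomial part and the $\Ltwo[e]$-contraction property of $\mathsf{Q}^{\ast}_{e,p}$, is a correct skeleton. Note also that your principal device for the Dirichlet-preserving approximation --- truncation of the integrated-Legendre bubble expansion --- is \emph{identical} to the operator the paper's proof is built on: the truncated series has derivative equal to the $\Ltwo[e]$-projection of $u'$ onto $\Cp_{p}(e)$ and vanishes at the left endpoint, so it coincides with $\mathsf{I}_{e,p}u$, the primitive of the projected derivative, which you mention only as a parenthetical alternative for $r\geq1$. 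The paper feeds precisely this operator into the best-approximation property of $\mathsf{Q}^{\ast}_{e,p}$, exactly as you intend to.

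The genuine gap is the passage to fractional orders, which is the actual content of the lemma, and your plan for it does not hold up. First, the ``Parseval computation using the known growth of the $\Hm[e]{s}$-seminorms of these bubbles'' is not an available argument: the integrated-Legendre functions are orthogonal in the $\Hone[e]$-seminorm and nearly orthogonal in $\Ltwo[e]$, but already $\SNHm[e]{\cdot}{2}$ is not diagonalized by this basis (the derivatives $L_{n}'$ are not orthogonal), and for fractional $r$ there is no elementary coefficient-wise characterization of the Gagliardo seminorm at all; the Bernstein/Jackson-type inequalities you would need to run Parseval at order $r$, with constants uniform in $r$, are themselves proved by exactly the space-interpolation machinery you declare you will avoid. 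Second, your step-one inverse inequality $\SNHm[e]{\phi}{\epsilon}\leq C(p+1)^{2\epsilon}\NLtwo[e]{\phi}$ with $C$ uniform over $\epsilon\in[0,\tfrac12]$ is asserted ``by interpolation between $\Ltwo[e]$ and $\Hone[e]$'' --- in direct tension with your own closing observation that those equivalence constants degenerate as $\epsilon\to0$; carrying out the derivation you sketch (K-functional splitting plus Markov) produces an extra factor of order $\min\bigl\{\epsilon^{-1/2},\log^{1/2}(p+1)\bigr\}$, which would contaminate the claimed bound, and removing it requires an argument you do not supply. The paper, following \cite[Lemma~18]{DEB04}, confronts the fractional orders head-on by operator interpolation of the error operator $\Op{Id}-\mathsf{Q}^{\ast}_{e,p}$ between the integer-order bounds $\Hm[e]{q}\to\Ltwo[e]$ (norm $C(p+1)^{-q}$, $0\leq q\leq1$, itself obtained from $\mathsf{I}_{e,p}$) and $\Hm[e]{2}\to\Hone[e]$ (norm $C$, obtained via the Markov inequality \eqref{eq:132}), with the choice $q=(r-2\epsilon)/(1-\epsilon)$ producing exactly the exponent $2\epsilon-r$ and the admissible range $2\epsilon\leq r\leq1+\epsilon$; your proposal contains no working substitute for this step.
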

\begin{proof}
  Write $\mathsf{I}_{e,p}:\zbHone[e]\mapsto \smash{\overset{\circ}{\Cp}}_{p+1}$ for the
  interpolation operator
  \begin{gather*}
    (\mathsf{I}_{e,p}u)(\xi) = u(0) + \int\nolimits_{0}^{\xi}
    \bigl(\mathsf{Q}_{e,p}\frac{du}{d\xi}\bigr)(\tau)\,\mathrm{d}\tau\;,\quad
    0\leq \xi \leq |e|\;,
  \end{gather*}
  where $\xi$ is the arclength parameter for the edge $e$ and
  $\mathsf{Q}_{e,p}:\Ltwo\mapsto\Cp_{p}(e)$ is the $\Ltwo[e]$-orthogonal projection.
  From \cite[Sect.~3.3.1, Thm.~3.17]{SAB98} we learn that
  \begin{align}
    \label{eq:90}
    \SNHone[e]{u-\mathsf{I}_{e,p}u} & \leq C (p+1)^{-1}\SNHm[e]{u}{2}\quad
    \forall u\in\Hm[e]{2}\;,\\
    \label{eq:91}
    \NLtwo[e]{u-\mathsf{I}_{e,p}u} & \leq C (p+1)^{-m}\SNHm[e]{u}{m}\quad
    \forall u\in \Hm[e]{m}\;,\quad m=1,2\;.
  \end{align}
  Here and the in the remainder of the proof, all constants may depend only on the
  length of $e$. As $\mathsf{I}_{e,p}u\in\smash{\overset{\circ}{\Cp}}_{p+1}(e)$ for
  $u\in\zbHone[e]$, this permits us to conclude
  \begin{gather}
    \label{eq:92}
    \NLtwo[e]{u-\mathsf{Q}^{\ast}_{e,p}u} \leq 
    \NLtwo[e]{u-\mathsf{I}_{e,p}u} \leq C (p+1)^{-1}\NHone[e]{u}\;,
  \end{gather}
  which yields, by interpolation between $\Hone[e]$ and $\Ltwo[e]$,
  \begin{gather}
    \label{eq:93}
    \NLtwo[e]{u-\mathsf{Q}^{\ast}_{e,p}u} \leq C (p+1)^{-q}\NHm[e]{u}{q}\;,\quad
      0\leq q \leq 1\;,
  \end{gather}
  where $C>0$ is independent of $q$. On the other hand, using the inverse
  inequality \cite[Lemma~1]{BEM97b}
  \begin{gather}
    \label{eq:132}
    \NHone[e]{u} \leq C (p+1)^{2} \NLtwo[e]{u} \quad\forall u\in\Cp_{p+1}(e)\,
  \end{gather}
  and \eqref{eq:90}, \eqref{eq:91} we find the estimate
  \begin{gather}
    \label{eq:94}
    \begin{aligned}
      \SNHone[e]{u-\mathsf{Q}^{\ast}_{e,p}u} & \leq 
      \SNHone[e]{u-\mathsf{I}_{e,p}u} + \SNHone[e]{\mathsf{Q}^{\ast}_{e,p}u-\mathsf{I}_{e,p}u}
      \\
      & \leq \SNHone[e]{u-\mathsf{I}_{e,p}u} +
      (p+1)^{2}\NLtwo[e]{\mathsf{Q}^{\ast}_{e,p}u-\mathsf{I}_{e,p}u} \\
      & \leq \SNHone[e]{u-\mathsf{I}_{e,p}u} +
      C(p+1)^{2}\NLtwo[e]{u-\mathsf{I}_{e,p}u} \\
      & \leq C \NHm[e]{u}{2}\;.
    \end{aligned}
  \end{gather}
  Interpolation between \eqref{eq:93} with $q=\frac{r-2\epsilon}{1-\epsilon}$ and \eqref{eq:94}
  finishes the  proof.
\end{proof}

\newcommand{\deb}{\cite[Sect.~6]{DEB04}}
\begin{proof}[of Thm.~\ref{thm:deb04}, borrowed from \deb]
  \label{pdeb04}
  Orthogonality \eqref{eq:62} of Lemma~\ref{lem:51} combined with the
  definition of $\mathsf{Q}_{T,p}$ involves
  \begin{gather}
    \label{eq:83}
    \int\limits_{T}\grad((\Pi_{T,p}^{0}-\mathsf{Q}_{T,p})u)\cdot\grad v_{p}\,\mathrm{d}\Bx = 0
    \quad\forall
    v_{p}\in\smash{\overset{\circ}{\Cp}}_{p+1}(T)\;.
  \end{gather}
  Hence, $(\Pi_{T,p}^{0}-\mathsf{Q}_{T,p})u$ turns out to be the
  $\SNHone[T]{\cdot}$-minimal degree $p+1$ polynomial extension of
  ${(\Pi_{T,p}^{0}-\mathsf{Q}_{T,p})u}_{|\partial T}$, which,thanks to
  Thm.~\ref{thm:MUN}, implies
  \begin{gather}
    \label{eq:84}
    \begin{aligned}
      \SNHone[T]{(\Pi_{T,p}^{0}-\mathsf{Q}_{T,p})u} & \leq 
      \SNHone[T]{\mathsf{S}_{T}({(\Pi_{T,p}^{0}u-\mathsf{Q}_{T,p}u)}_{|\partial T})}
      \\
      & \leq C \SNHh[\partial T]{{(\Pi_{T,p}^{0}u-\mathsf{Q}_{T,p}u)}_{|\partial T}}\;.
    \end{aligned}
  \end{gather}
  Thus, by the continuity of the trace operator $\Hone[T]\mapsto\Hh[\partial T]$,
  \begin{gather}
    \label{eq:85}
    \begin{aligned}
      \SNHone[T]{u-\Pi_{T,p}^{0}u} & \leq 
      \begin{aligned}[t]
        & \SNHone[T]{u-\mathsf{Q}_{T,p}u} \\ & + C
        \left(\SNHh[\partial T]{{(u-\Pi_{T,p}^{0}u)}_{|\partial T}}+
          \SNHh[\partial T]{{(u-\mathsf{Q}_{T,p}u)}_{|\partial T}}\right)
      \end{aligned}
      \\
      & \leq C \left(\SNHone[T]{u-\mathsf{Q}_{T,p}u} + 
        \SNHh[\partial T]{{(u-\Pi_{T,p}^{0}u)}_{|\partial T}}\right)\;.
    \end{aligned}
  \end{gather}
  To estimate $\SNHh[\partial T]{{(u-\Pi_{T,p}^{0}u)}_{|\partial T}}$ we appeal to 
  Lemma~\ref{lem:hhptsplit} and get 
  \begin{gather}
    \label{eq:86}
    \begin{aligned}
      \SNHh[\partial T]{{(u-\Pi_{T,p}^{0}u)}_{|\partial T}} & \leq
      \SNHm[\partial T]{{(u-\Pi_{T,p}^{0}u)}_{|\partial T}}{\frac{1}{2}+\epsilon} \\ & \leq
      \frac{C}{\epsilon}\sum\limits_{f\in\Cf_{2}(T)} 
      \SNHm[f]{{(u-\Pi_{T,p}^{0}u)}_{|f}}{\frac{1}{2}+\epsilon}\;.
    \end{aligned}
  \end{gather}
  Next, we use \eqref{eq:67} from Lemma~\ref{lem:51} together with \eqref{eq:87},
  which confirms that ${(\Pi_{T,p}^{0}u)}_{|f}-\mathsf{Q}_{f,p}u$ is the minimum
  $\SNHm[f]{\cdot}{\frac{1}{2}+\epsilon}$-seminorm polynomial extension of
  ${(\Pi_{T,p}^{0}u)}_{|\partial f}-\mathsf{Q}_{f,p}{(u)}_{|\partial f}$. Hence,
  based on arguments parallel to the derivation of \eqref{eq:85}, this time using
  Thm.~\ref{thm:AID}, we can bound
  \begin{gather}
    \label{eq:88}
      \SNHm[f]{{(u-\Pi_{T,p}^{0}u)}_{|f}}{\frac{1}{2}+\epsilon} \leq 
      \SNHm[f]{u_{|f}-\mathsf{Q}_{f,p}u}{\frac{1}{2}+\epsilon} + C
        \SNHm[\partial f]{{(\Pi_{T,p}^{0}u-\mathsf{Q}_{f,p}u)}_{|\partial
            f}}{\epsilon}\;,
  \end{gather}
  where the ($\epsilon$-independent !) continuity constant of the trace mapping
  $\mathsf{S}_{f}$ enters the constant $C>0$. Also recall the continuity of the trace
  mapping $\Hm[f]{\frac{1}{2}+\epsilon}\mapsto\Hm[\partial f]{\epsilon}$ \cite[Proof
  of Lemma 3.35]{MCL00}: with $C>0$ independent of $\epsilon$,
  \begin{gather}
    \label{eq:130}
    \NHm[\partial f]{u_{|\partial f}}{\epsilon} \leq \frac{C}{\sqrt{\epsilon}}
    \NHm[f]{u}{\frac{1}{2}+\epsilon}\quad\forall u\in\Hm[f]{\frac{1}{2}+\epsilon}\;.
  \end{gather}
  Use this to continue the estimate \eqref{eq:88}
  \begin{gather}
    \label{eq:131}
    \SNHm[f]{{(u-\Pi_{T,p}^{0}u)}_{|f}}{\frac{1}{2}+\epsilon}
    \leq C \left(\frac{1}{\sqrt{\epsilon}}
       \SNHm[f]{u_{|f}-\mathsf{Q}_{f,p}u}{\frac{1}{2}+\epsilon} + 
       \SNHm[\partial f]{{(u-\Pi_{T,p}^{0}u)}_{|\partial
           f}}{\epsilon}
     \right)\;.
   \end{gather}
   As $\epsilon<\frac{1}{2}$, we can localize the norm $\SNHm[\partial
   f]{{(u-\Pi_{T,p}^{0}u)}_{|\partial f}}{\epsilon}$ to the edges of $f$:
  \begin{gather}
    \label{eq:95}
    \SNHm[\partial
    f]{{(u-\Pi_{T,p}^{0}u)}_{|\partial f}}{\epsilon} \leq
    \frac{C}{\frac{1}{2}-\epsilon} \sum\limits_{e\in\Cf_{1}(T), e\subset\partial f}
    \SNHm[e]{{(u-\Pi_{T,p}^{0}u)}_{|e}}{\epsilon}\;.
  \end{gather}
  Recall the $\epsilon$-uniform equivalence of the norms $\SNHm[e]{\cdot}{\epsilon}$
  and $\NHm[e]{\frac{d}{dl}\cdot}{-1+\epsilon}$. Hence, owing to \eqref{eq:68}, we have
  from Lemma~\ref{lem:Qepest} with $r=s$:
  \begin{gather}
    \label{eq:96}
    \begin{aligned}
      \SNHm[e]{{(u-\Pi_{T,p}^{0}u)}_{|e}}{\epsilon} & \leq C
      \inf\limits_{v_{p}\in \overset{\circ}{\Cp}_{p+1}}
      \SNHm[e]{{(u-\Pi_{T,0}^{0}u)}_{|e} - v_{p}}{\epsilon} \\
      & \leq C \SNHm[e]{{(u-\Pi_{T,0}^{0}u)}_{|e} -
        \mathsf{Q}^{\ast}_{e,p}({(u-\Pi_{T,0}^{0}u)}_{|e})}{\epsilon} \\
      & \leq C(p+1)^{2\epsilon-s}\SNHm[e]{{(u-\Pi_{T,0}^{0}u)}_{|e}}{s}\;.
    \end{aligned}
  \end{gather}
  Moreover, $\Hm[T]{1+s}$ is continuously embedded into $C^{0}(\overline{T})$. 
  Consequently, applying trace theorems twice and appealing to the equivalence
  of all norms on the finite dimensional space $\Cp_{1}(T)$,
  \begin{gather}
    \label{eq:119}
    \SNHm[e]{{(u-\Pi_{T,0}^{0}u)}_{|e}}{s} \leq \SNHm[e]{{u}_{|e}}{s} + 
    \SNHm[e]{{(\Pi_{T,0}^{0}u)}_{|e}}{s}
    \leq C\SNHm[T]{u}{1+s}\;,
  \end{gather}
  where $C>0$ depends on $s$ and $T$, but not on $p$. Combining the estimates
  \eqref{eq:85}, \eqref{eq:86}, \eqref{eq:131}, and \eqref{eq:95}, \eqref{eq:96} with
  \eqref{eq:119}, we find
  \begin{gather}
    \label{eq:120}
    \SNHone[T]{u-\Pi^{0}_{T,p}u} \leq C 
    \Bigl(
    \begin{aligned}[t] & 
      \SNHone[T]{u-\mathsf{Q}_{T,p}u} + \frac{1}{\epsilon^{3/2}}\sum\limits_{f\in\Cf_{2}(T)}
      \SNHm[f]{u_{|f}-\mathsf{Q}_{f,p}({u}_{|f})}{\frac{1}{2}+\epsilon} + \\ & 
      \frac{(p+1)^{2\epsilon-s}}{\epsilon(\frac{1}{2}-\epsilon)}
      \sum\limits_{e\in\Cf_{1}(T)}\SNHm[T]{u}{1+s}
    \Bigr)\;,
  \end{aligned}
  \end{gather}
  with $C>0$ independent of $p$. Finally, we plug in the projection error
  estimates \eqref{eq:144}, \eqref{eq:145}, and arrive at ($C>0$ independent
  of $u$, $\epsilon$, $p$, $s$)
  \begin{gather}
    \label{eq:121}
    \SNHone[T]{u-\Pi^{0}_{T,p}(\epsilon)u} \leq C 
    \Bigl(
    \begin{aligned}[t] & 
      (p+1)^{-s}\SNHm[T]{u}{1+s} + \frac{(p+1)^{-s+\epsilon}}{\epsilon^{3/2}}
      \sum\limits_{f\in\Cf_{2}(T)} 
      \SNHm[f]{u}{\frac{1}{2}+s} + 
      \\ & 
      \frac{(p+1)^{-s+2\epsilon}}{\epsilon(\frac{1}{2}-\epsilon)}
      \sum\limits_{e\in\Cf_{1}(T)}\SNHm[e]{u}{s}
    \Bigr)\;.
  \end{aligned}
  \end{gather}
  The choice \eqref{eq:133} of $\epsilon$ together with an application of trace
  theorems then finishes the proof.
\end{proof}

The next lemma plays the role of \cite[Lemma~9]{BCD06} and makes it possible to
adapt the approach of \cite[Sect.~4.4]{BCD06} to 3D edge elements. 

\begin{lemma} 
  \label{lem:1}
  If $\frac{1}{2}<s\leq 1$ and $\Vu\in\Hmv{s}$ satisfies $\curl\Vu_{|T}\in \Cpv_{p}(T)$ for all
  $T\in\mathcal{M}$, then
  \begin{gather}
    \label{eqo:6}
    \NLtwo{
      (\Op{Id}-\Pi_{p}^{1})\Vu} \leq C\; \frac{\log^{3/2}p}{p^{s}} \bigl(\NHm{\Vu}{s} + 
    \NLtwo[T]{\curl\Vu}\bigr)\;,
  \end{gather}
  with a constant $C>0$ depending only on $\mathcal{M}$ and $s$.
\end{lemma}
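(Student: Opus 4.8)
The plan is to localize to a single tetrahedron $T\in\mesh$ and there convert the edge-element interpolation error into a scalar $\Hone[T]$-interpolation error, which Theorem~\ref{thm:deb04} already controls. Since $\Vu\in\Hmv{s}$ with $s>\tfrac12$ its tangential traces are single-valued across faces, so the distributional $\curl\Vu$ coincides with the piecewise $\curl$ and satisfies $\Div\curl\Vu=0$; thus $\curl\Vu_{|T}$ is a divergence-free polynomial of degree $\le p$, i.e. $\curl\Vu_{|T}\in\Cpv_p(\Div 0,\bbR^{3})$. First I would apply the smoothed Poincar\'e lifting $\Poinc_{T}$ of Section~\ref{sec:edge-elements} to this field, setting $\Vz_T:=\Poinc_{T}(\curl\Vu_{|T})$. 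By \eqref{eq:72} we have $\Vz_T\in\Cw^1_p(T)$, by Theorem~\ref{thm:CMI} we have $\curl\Vz_T=\curl\Vu_{|T}$, and the same theorem supplies the stability bound $\NHonev[T]{\Vz_T}\le C\NLtwov[T]{\curl\Vu}$.

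Consequently $\Vu_{|T}-\Vz_T$ is curl-free on the simply connected tetrahedron $T$, hence $\Vu_{|T}-\Vz_T=\grad\phi_T$ for some $\phi_T\in H^{1+s}(T)$, determined up to a constant; equivalence of the seminorms gives $\SNHm[T]{\phi_T}{1+s}\le C\SNHmv[T]{\grad\phi_T}{s}\le C(\SNHmv[T]{\Vu}{s}+\NLtwov[T]{\curl\Vu})$. The regularity $\phi_T\in H^{1+s}(T)$, $s>\tfrac12$, is exactly what \eqref{eq:139} requires for $\Pi^0_{T,p}\phi_T$ to be defined, and it is also what renders $\Pi^1_{T,p}\Vu_{|T}$ meaningful even though $\Vu$ carries only $\boldsymbol{H}^s$ regularity with $s\le1$. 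Because $\Pi^1_{T,p}$ is a projection onto $\Cw^1_p(T)$ (Lemma~\ref{lem:prj1}), it reproduces $\Vz_T$; combined with the commuting diagram \eqref{eq:32} this yields the key identity
\begin{gather*}
  (\Op{Id}-\Pi^1_{T,p})\Vu_{|T} = (\Op{Id}-\Pi^1_{T,p})\grad\phi_T
  = \grad\,(\Op{Id}-\Pi^0_{T,p})\phi_T\;.
\end{gather*}

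Taking $\Ltwov[T]$-norms and invoking $\NLtwov[T]{\grad v}=\SNHone[T]{v}$ together with Theorem~\ref{thm:deb04}, I obtain
\begin{gather*}
  \NLtwov[T]{(\Op{Id}-\Pi^1_{T,p})\Vu} = \SNHone[T]{(\Op{Id}-\Pi^0_{T,p})\phi_T}
  \leq C_T\,\frac{\log^{3/2}p}{p^{s}}\bigl(\SNHmv[T]{\Vu}{s}+\NLtwov[T]{\curl\Vu}\bigr)\;.
\end{gather*}
Finally I would square this, sum over the fixed finite mesh, use that $\Pi^1_p$ restricts to $\Pi^1_{T,p}$ on each cell (the patching behind Lemma~\ref{lem:comp}), and exploit the sub-additivity $\sum_{T}\SNHmv[T]{\Vu}{s}^2\le\SNHmv{\Vu}{s}^2$ of the fractional seminorm to recover \eqref{eqo:6} with $C=\max_{T\in\mesh}C_T$.

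The step I expect to be the main obstacle is the careful justification of well-definedness: for $\Vu$ of regularity only $\boldsymbol{H}^s$, $s\le1$, the edge and face functionals entering the stages \eqref{eq:39}--\eqref{eq:44} are not a priori defined, and one must argue that the splitting $\Vu_{|T}=\Vz_T+\grad\phi_T$ makes every stage meaningful---the polynomial part $\Vz_T$ being reproduced exactly and the contributions of $\grad\phi_T$ being routed through the well-behaved scalar operator $\Pi^0_{T,p}$ via the commuting diagram. One also has to confirm that the assembled $\Pi^1_p\Vu$ is genuinely $\Hcurl$-conforming, which follows from Lemma~\ref{lem:comp} and the single-valued tangential trace of $\Vu\in\Hmv{s}$, $s>\tfrac12$. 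The remaining ingredients---the seminorm equivalence $\SNHm[T]{\phi_T}{1+s}\simeq\SNHmv[T]{\grad\phi_T}{s}$ and the embedding $\NHmv[T]{\cdot}{s}\le\NHonev[T]{\cdot}$ for $s\le1$---are routine.
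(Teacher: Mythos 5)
Your proposal is correct and coincides in every essential step with the paper's own proof: the paper likewise splits $\Vu_{|T}=(\Vu-\Poinc_{T}\curl\Vu)+\Poinc_{T}\curl\Vu$, uses \eqref{eq:72} and Thm.~\ref{thm:CMI} to conclude that the lifted part lies in $\Cw^{1}_{p}(T)$ (hence is reproduced by $\Pi^{1}_{T,p}$) and is $\Honev[T]$-stable, writes the curl-free remainder as $\grad v$ with $v\in\Hm[T]{1+s}$, reduces everything to Thm.~\ref{thm:deb04} through the commuting diagram \eqref{eq:32}, and finally squares and sums over the cells. Your explicit discussion of why $\Pi^{1}_{T,p}$ is meaningful on fields of regularity only $\Hmv[T]{s}$, $s\le 1$ --- namely that the splitting routes the non-polynomial part through $\Pi^{0}_{T,p}$ --- is a point the paper leaves implicit, but it does not alter the argument.
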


\begin{proof}
Pick any $\Vu$ complying with the assumptions of the lemma. The
locality of the projector allows purely local considerations. Single out one
tetrahedron $T\in\mathcal{M}$, still write $\Vu={\Vu}_{|T}$, and split on $T$
\begin{gather}
  \label{eqo:7}
  \Vu ={(\Vu-\Op{R}_{T}\curl\Vu)} +  {\Op{R}_{T}\curl\Vu}\;,
\end{gather}
Note that the properties of the smoothed Poincar\'e lifting 
$\Poinc_{T}$ stated in Thm.~\ref{thm:CMI} imply
\begin{enumerate}
\item $\displaystyle \curl(\Vu-\Op{R}_{T}\curl\Vu)=0$ on $T$, as a consequence of
  \eqref{eq:cmi}, and
\item ${\Op{R}_{T}\curl\Vu}\in\Honev[T]$ and the bound
  \begin{gather}
    \label{eq:135}
    \NHone[T]{\Op{R}_{T}\curl\Vu} \leq C \NLtwo{\curl \Vu}\;,
  \end{gather}
  where here and below no constant may depend on $\Vu$ or $p$. 
\end{enumerate}
Hence, as $\Vu\in\Hmv[T]{s}$, there exists $v\in\Hm[T]{1+s}$ such that
\begin{gather}
  \label{eqo:8}
  \Vu = \grad v + \Op{R}_{T}\curl\Vu\;.
\end{gather}
The continuity of $\Poinc_{T}$ reveals that, with a constant $C>0$ 
depending only on $T$,
\begin{gather}
\label{eqo:9}
  \SNHm[T]{v}{1+s} \leq \NHm[T]{\Vu}{s} + \SNHone[T]{\Op{R}_{T}\curl\Vu} \leq
  \NHm[T]{\Vu}{s} + C \NLtwo[T]{\curl\Vu} \;.
\end{gather}
By the assumptions of the lemma and \eqref{eq:72} we know that
\begin{gather}
  \label{eqo:73}
  \Poinc_{T} \curl\Vu \in \Cw^{1}_{p}(T)\;.
\end{gather}
By the commuting diagram property from Lemma~\ref{lem:prj1} and the projector
property of $\Pi_{T,p}^{1}$ the task is reduced to an interpolation estimate for
$\Pi_{T,p}^{0}$:
\begin{gather}
  \label{eqo:10}
  (\Op{Id}-\Pi_{T,p}^{1})\Vu \overset{\text{\eqref{eqo:8}}}{=}
  \grad (\Op{Id}-\Pi_{T,p}^{0})v + 
  \underbrace{(\Op{Id}-\Pi_{T,p}^{1})\Op{R}_{T}\curl\Vu}_{=0}
\end{gather}
As a consequence, invoking Thm.~\ref{thm:deb04},
\begin{multline}
  \label{eqo:11}
  \NLtwo[T]{(\Op{Id}-\Pi_{T,p}^{1})\Vu} \overset{\text{\eqref{eqo:10}}}{=}
  \SNHm[T]{(\Op{Id}-\Pi_{T,p}^{0})v}{1} \\ \leq
  C \frac{\log^{2/3}p}{p^{s}} \SNHm[T]{v}{1+s} \overset{\text{\eqref{eqo:9}}}{\leq} C
  \frac{\log^{3/2}p}{p^{s}}\bigl(\NHm[T]{\Vu}{s}+\NLtwo[T]{\curl\Vu}\bigr)\;,
\end{multline}
which furnishes a local version of the estimate. Squaring \eqref{eqo:11} and summing
over all tetrahedra finishes the proof.
\end{proof}

\section{Discrete compactness}
\label{sec:discrete-compactness}

Smoothness of the solenoidal part of the Helmholtz decomposition of $\Hcurl$ and
$\zbHcurl$, respectively, plays a pivotal role. It can be deduced from elliptic
lifting theorems for 2nd-order elliptic boundary value problems \cite[Ch.~6]{DAU88}.
Proofs of the following lemma can be found in \cite[Sect.~4.1]{HIP02} and
\cite[Sect.~3]{ABD96}. 

\begin{lemma}
  \label{lem:reg}
  For any Lipschitz polyhedron $\Omega\subset\bbR^{3}$ there is a $\frac{1}{2}<s\leq
  1$ such that $\Cx:=\zbHdiv\cap\Hcurl$ and $\Cx:=\Hdiv\cap\zbHcurl$ are continuously
  embedded into $\Hmv{s}$, that is,
  \begin{gather}
    \label{eq:97}
    \exists C=C(s,\Omega)>0:\quad
    \NHm{\Vu}{s} \leq C \left(\NHcurl{\Vu}+\NHdiv{\Vu}\right)\quad\forall \Vu \in \Cx\;.
  \end{gather}
\end{lemma}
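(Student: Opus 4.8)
The plan is to reduce both embeddings to the scalar and vector elliptic lifting (``shift'') theorems for the Laplacian on the Lipschitz polyhedron $\Omega$, which are precisely the corner/edge results of \cite{DAU88} quoted in \cite{ABD96,HIP02}. The two spaces are handled by one template, with the roles of the homogeneous Dirichlet and Neumann conditions (and, correspondingly, of the magnetic and electric boundary conditions for the vector Laplacian) interchanged; I would carry out $\Cx=\zbHdiv\cap\Hcurl$ and only indicate the changes for $\Cx=\Hdiv\cap\zbHcurl$. The guiding idea is to peel off a gradient that absorbs both $\Div\Vu$ and the normal trace, leaving a solenoidal remainder whose $\Hmv{s}$-regularity is controlled by its curl through a vector potential.

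So, given $\Vu\in\zbHdiv\cap\Hcurl$, first solve the Neumann problem for $\varphi\in\Hone/\bbR$, namely $\Delta\varphi=\Div\Vu$ in $\Omega$ with $\partial_{n}\varphi=0$ on $\partial\Omega$; the compatibility condition holds because $\int_{\Omega}\Div\Vu=\int_{\partial\Omega}\Vu\cdot\Bn=0$ by the vanishing normal trace. The lifting theorem for the Neumann Laplacian on the polyhedron furnishes some $s>\frac12$ with $\varphi\in\Hm{1+s}$ and $\NHm{\varphi}{1+s}\leq C\NLtwo{\Div\Vu}$, hence $\grad\varphi\in\Hmv{s}$ with the same bound. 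Setting $\Vz:=\Vu-\grad\varphi$ one gets $\Div\Vz=0$, $\Vz\cdot\Bn=0$, and $\curl\Vz=\curl\Vu$, so $\Vz\in\zbkHdiv$.

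For the solenoidal remainder I would use a vector potential. After splitting off the finite-dimensional space of harmonic fields $\{\Vh:\curl\Vh=0,\;\Div\Vh=0,\;\Vh\cdot\Bn=0\}$, whose $\Hmv{s}$-regularity is subsumed by the same corner/edge estimates, let $\Vw$ solve the vector Laplace problem $-\Delta\Vw=\curl\Vu\in\Ltwov$ under the magnetic conditions $\Vw\times\Bn=0$, $\Div\Vw=0$; solvability and an $\Honev$ a priori bound follow from standard vector-potential theory \cite{ABD96} (or locally from the smoothed Poincar\'e lifting of Thm.~\ref{thm:CMI}). Using $-\Delta=\curl\curl-\grad\Div$ together with $\Div\Vw=0$ one sees $\curl(\curl\Vw-\Vz)=0$ and $\Div(\curl\Vw-\Vz)=0$ with matching boundary data, so $\curl\Vw=\Vz$ once the harmonic part has been removed. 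The corner/edge shift theory of \cite{DAU88,ABD96} then provides an $s>\frac12$ with $\Vw\in\Hmv{1+s}$ and $\NHmv{\Vw}{1+s}\leq C\NLtwov{\curl\Vu}$, whence $\Vz=\curl\Vw\in\Hmv{s}$ with $\NHmv{\Vz}{s}\leq C\NLtwov{\curl\Vu}$. Adding the gradient part and the harmonic contribution, and taking the smaller of the exponents delivered by the scalar and the vector shift theorems (over both spaces), yields \eqref{eq:97}. For $\Hdiv\cap\zbHcurl$ one replaces the Neumann potential by the Dirichlet potential $\varphi\in\zbHone$ (so that $\grad\varphi\times\Bn=0$) and the magnetic by the electric condition $\Vw\cdot\Bn=0$ for the vector Laplacian; the argument is otherwise verbatim.

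The genuine obstacle is the last step, the shift estimate on the \emph{polyhedron}, and this is exactly why the hypothesis is a polyhedron rather than a general Lipschitz domain. On a general Lipschitz domain the scalar Dirichlet/Neumann Laplacian with $\Ltwo$ data only reaches $\Hm{3/2-\delta}$, i.e. $\grad\varphi\in\Hmv{1/2-\delta}$, which falls \emph{below} the threshold $\frac12$; and on smooth or convex domains the magnetic/electric conditions decouple the vector Laplacian into scalar problems and one even attains $s=1$, i.e. $\Vu\in\Hmv{1}$. On a Lipschitz polyhedron these conditions couple across the edges and vertices, and the attainable $s$ is dictated by the strongest reentrant edge/corner singularity (essentially $s\approx\pi/\omega$ for the largest edge opening $\omega$), in general strictly below $1$. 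Showing that, for a genuine polyhedron ($\omega<2\pi$, no slits), this critical exponent nevertheless stays strictly above $\frac12$ --- the precise content of \cite[Sect.~3]{ABD96} built on \cite{DAU88} --- is the crux; the gradient/vector-potential bookkeeping and the handling of the finite-dimensional harmonic fields are routine by comparison.
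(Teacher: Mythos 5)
Your scalar-potential step is sound, and so is the identification of $\curl\Vw$ with $\Vz$ modulo harmonic fields; the fatal defect is the regularity you ascribe to the vector potential $\Vw$. The problem you pose for $\Vw$ --- $\curl\curl\Vw=\curl\Vu$, $\Div\Vw=0$, $\Vw\times\Bn=0$ --- is precisely the electric Maxwell source problem, and on a non-convex Lipschitz polyhedron its solution is in general \emph{not} in $\Hmv{1+s}$; it is not even in $\Honev$. Near a reentrant edge of opening $\omega\in(\pi,2\pi)$ the field $\grad\bigl(r^{\pi/\omega}\sin(\pi\theta/\omega)\bigr)\sim r^{\pi/\omega-1}$ is curl-free, divergence-free and has vanishing tangential trace, so nothing in your boundary value problem excludes it; such gradient singularities enter $\Vw$ with nonzero coefficients for generic data and cap its regularity at $\Hmv{\pi/\omega-\delta}$. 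Put differently, if the shift estimate $\NHmv{\Vw}{1+s}\leq C\NLtwov{\curl\Vu}$ were true, every electric Maxwell cavity eigenfield (which is exactly the normal vector potential of its own curl) would lie in $\Honev$, contradicting the classical fact that $\zbHcurl\cap\kHdiv\not\subset\Honev$ on non-convex polyhedra --- the very pathology that makes this lemma and the whole discrete compactness question nontrivial. Neither \cite{DAU88} nor \cite{ABD96} contains such a vector shift theorem: the $\Honev$ vector potential theorem of \cite{ABD96} (see also \cite[\S~I.3]{GIR86}) concerns the \emph{other} boundary condition $\Vw\cdot\Bn=0$. The true statement --- that $\curl\Vw\in\Hmv{s}$ even though $\Vw\notin\Honev$, because the curl annihilates the gradient singularities --- is exactly the assertion of the lemma, so your argument is circular at the decisive point. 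Note also the structural reason the approach is backwards: writing $\Vz=\curl\Vw$ with $\Vw$ merely in $\Honev$ gives no information on $\Vz$ beyond $\Ltwov$; regularity of $\Vz$ can only come from exhibiting $\Vz$ itself as a sum of regular pieces.

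The repair, which is the route of the references the paper cites (\cite[Sect.~3]{ABD96}, \cite[Sect.~4.1]{HIP02}), is to use the vector potential with the \emph{tangential} boundary condition, applied to $\curl\Vu$ rather than to $\Vz$: since $\curl\Vu\in\kHdiv$ has vanishing flux through each component of $\partial\Omega$, one extends it divergence-free to $\bbR^{3}$ and takes the $\curl$ of the Newtonian potential, obtaining $\Vw\in\Honev$ with $\curl\Vw=\curl\Vu$, $\Div\Vw=0$, $\Vw\cdot\Bn=0$ and $\NHonev{\Vw}\leq C\NLtwov{\curl\Vu}$ by Calder\'on--Zygmund theory alone --- a first-order, constant-coefficient argument requiring no boundary shift theorem. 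In your ordering for $\Cx=\zbHdiv\cap\Hcurl$ this finishes the proof at once: $\Vz-\Vw$ is curl-free, divergence-free and has zero normal trace, hence lies in the finite-dimensional space of harmonic fields (itself $\Hmv{s}$-regular), so $\Vu=\grad\varphi+\Vw+\Vh\in\Hmv{s}$. For $\Cx=\Hdiv\cap\zbHcurl$ the traces of $\Vz$ and $\Vw$ no longer match; there the curl-free, divergence-free difference is written as $\grad\chi$ plus a harmonic field, where $\chi$ is a harmonic function whose Dirichlet data are controlled through the $\Hh[\partial\Omega]$-trace of $\Vw\in\Honev$, and again only the \emph{scalar} Dirichlet shift on the polyhedron is invoked. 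In both cases the sole point where the polyhedral geometry enters is the scalar Dirichlet/Neumann lifting $\Hm{1+s}$ with $s>\frac{1}{2}$ from \cite{DAU88} --- the crux you correctly identified at the end of your proposal, and the only shift theorem the proof actually needs.
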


We first verify the discrete compactness property of Def.~\ref{def:dc} for
$\epsilonbf\equiv 1$: consider a sequence $(\Vu_{p})_{p\in\mathbb{N}}$, which
satisfies
\begin{align}
  \label{eqo:12}
  \text{(i)}\; & \Vu_{p} \in \mathcal{W}_{p}^{1}(\mathcal{M})\;,\\
  \label{eqo:13}
  \text{(ii)}\;& \SPLtwo{\Vu_{p}}{\Vz_{p}} = 0 \quad
  \forall \Vz_{p}\in \{\Vv\in\mathcal{W}_{p}^{1}(\mesh):\;\curl\Vv=0\}\;,\\
  \label{eqo:17}
  \text{(iii)}\;& \NLtwo{\Vu_{p}} + \NLtwo{\curl\Vu_{p}} \leq 1\quad\forall p\in\mathbb{N}\;.
\end{align}

\begin{theorem}
  \label{thm:main}
  A sequence $(\Vu_{p})_{p\in\mathbb{N}}$ compliant with \eqref{eqo:12}--\eqref{eqo:17}
  possesses a subsequence that converges in $\Ltwov$.
\end{theorem}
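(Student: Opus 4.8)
The plan is to realise the program sketched around \eqref{eq:134}: decompose each $\Vu_p$ by the $\Ltwo$-orthogonal Helmholtz splitting, get compactness of the solenoidal part for free from Lemma~\ref{lem:reg} and Rellich, and then show that the curl-free remainder vanishes in $\Ltwov$ by converting the discrete orthogonality \eqref{eqo:13} into the interpolation estimate of Lemma~\ref{lem:1}. I would carry out the $\Hcurl$-conforming case; the $\zbHcurl$-conforming situation is treated identically, using the second space in Lemma~\ref{lem:reg} and the trace preservation \eqref{eq:146}.

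First I would split $\Vu_p = \widetilde{\Vu}_p \oplus_{L^2} \Vu_p^0$ with $\widetilde{\Vu}_p \in \Hcurl\cap\zbkHdiv$ and $\Vu_p^0\in\Kern{\curl}$. Since $\Vu_p^0$ is curl-free one has $\curl\widetilde{\Vu}_p = \curl\Vu_p$, and orthogonality gives $\NLtwo{\widetilde{\Vu}_p}\le\NLtwo{\Vu_p}$, while $\Div\widetilde{\Vu}_p=0$. Together with \eqref{eqo:17} this bounds $\NHcurl{\widetilde{\Vu}_p}+\NHdiv{\widetilde{\Vu}_p}$ uniformly, so Lemma~\ref{lem:reg} puts $(\widetilde{\Vu}_p)$ in a bounded subset of $\Hmv{s}$ for some $\tfrac12<s\le1$. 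The compact embedding $\Hmv{s}\compemb\Ltwov$ then yields a subsequence, which I will not relabel, with $\widetilde{\Vu}_p\to\Vu$ in $\Ltwov$.

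It remains to prove $\Vu_p^0=\Vu_p-\widetilde{\Vu}_p\to0$ in $\Ltwov$ along the whole sequence. The key structural observation is that the explicit form of $\Cw^{1}_{p}(T)$ forces $\curl\Vu_p{}_{|T}\in\Cpv_p(T)$ for every $T\in\mesh$, hence $\curl\widetilde{\Vu}_p{}_{|T}=\curl\Vu_p{}_{|T}$ is a degree-$p$ polynomial field; since moreover $\widetilde{\Vu}_p\in\Hmv{s}$, Lemma~\ref{lem:1} applies and gives $\NLtwo{(\Op{Id}-\Pi_p^1)\widetilde{\Vu}_p}\to0$. Next, the commuting diagram property of Lemma~\ref{lem:43} and the projector property of $\Pi_p^2$ give $\curl\Pi_p^1\widetilde{\Vu}_p=\Pi_p^2\curl\Vu_p=\curl\Vu_p$, because $\curl\Vu_p\in\Cw^{2}_{p}(\mesh)$. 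Thus $\Vu_p-\Pi_p^1\widetilde{\Vu}_p$ is a \emph{discrete} curl-free field, so \eqref{eqo:13} yields $\SPLtwo{\Vu_p}{\Vu_p-\Pi_p^1\widetilde{\Vu}_p}=0$. Feeding this and the Helmholtz orthogonality into $\NLtwo{\Vu_p^0}^2$ produces the crucial identity
\begin{gather*}
  \NLtwo{\Vu_p^0}^2 = \SPLtwo{\Vu_p^0}{\Vu_p} = \SPLtwo{\Vu_p}{\Pi_p^1\widetilde{\Vu}_p-\widetilde{\Vu}_p}\le \NLtwo{\Pi_p^1\widetilde{\Vu}_p-\widetilde{\Vu}_p}\;,
\end{gather*}
where I used $\NLtwo{\Vu_p}\le1$. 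The right-hand side tends to $0$ by Lemma~\ref{lem:1}, whence $\Vu_p^0\to0$; adding this to the convergence of $\widetilde{\Vu}_p$ gives $\Vu_p\to\Vu$ in $\Ltwov$ along the subsequence.

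The genuinely substantial ingredient is the uniform interpolation estimate of Lemma~\ref{lem:1}, which is precisely why it was isolated beforehand; the rest is bookkeeping with two orthogonality relations and one commuting diagram. The two points I would check with care are that $\curl\Vu_p$ really is a degree-$p$ polynomial field (needed both to invoke Lemma~\ref{lem:1} and to justify $\Pi_p^2\curl\Vu_p=\curl\Vu_p$), and that, although $\widetilde{\Vu}_p$ carries only $\Hmv{s}$-regularity with $s\le1$, the object $\Pi_p^1\widetilde{\Vu}_p$ is nonetheless well defined through the decomposition $\widetilde{\Vu}_p=\grad v+\Poinc_{T}\curl\widetilde{\Vu}_p$ employed inside the proof of Lemma~\ref{lem:1}.
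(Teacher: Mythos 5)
Your proposal is correct and follows essentially the same route as the paper's own proof: continuous Helmholtz decomposition, Lemma~\ref{lem:reg} plus Rellich for the solenoidal part, and the commuting-diagram/discrete-orthogonality argument that reduces the curl-free remainder to the interpolation estimate of Lemma~\ref{lem:1}. The only (harmless) difference is algebraic: you bound $\NLtwo{\Vu_p^0}^2 \le \NLtwo{\widetilde{\Vu}_p-\Pi_p^1\widetilde{\Vu}_p}$ via $\SPLtwo{\Vu_p^0}{\Vu_p}$, whereas the paper's N\'ed\'el\'ec trick tests $\widetilde{\Vu}_p-\Vu_p$ directly against $\widetilde{\Vu}_p-\Pi_p^1\widetilde{\Vu}_p$ to obtain the slightly sharper $\NLtwo{\widetilde{\Vu}_p-\Vu_p}\le\NLtwo{\widetilde{\Vu}_p-\Pi_p^1\widetilde{\Vu}_p}$; both suffice for the conclusion.
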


\begin{proof}
  The proof resorts to the ``standard policy'' for tackling the problem of discrete
  compactness, introduced by Kikuchi \cite{KIK89,KIK01} for analyzing the $h$-version 
  of edge elements. It forms the core of most papers tackling the issue of 
  discrete compactness, see \cite[Thm.~2]{BDC03}, \cite[Thm.~11]{BCD06},
  \cite[Thm.~4.9]{HIP02}, \cite[Thm.~2]{DMS00}, \cite[Thm.~11]{BCD06}, etc.

  We start with the continuous Helmholtz decomposition of $\Vu_{p}$: let
  $\widetilde{\Vu}_{p}$ be the unique vector field in $\Hcurl$ with
\begin{align}
  \label{eq:98}
  \curl \widetilde{\Vu}_{p} & = \curl\Vu_{p}\;,\\
  \label{eq:99}
  \SPLtwo{\widetilde{\Vu}_{p}}{\Vz} & = 0 \quad
  \forall \Vz\in \Kern{\curl}\cap\Hcurl\;.
\end{align}
The inclusion $\grad\Hone\subset\Kern{\curl}$ enforces 
\begin{gather}
  \label{eqo:14}
  \Div\widetilde{\Vu}_{p} = 0\quad\text{in }\Omega
  \quad,\quad \widetilde{\Vu}_{p}\cdot\Bn = 0 \quad\text{on }
  \partial\Omega\;.
\end{gather}
Hence, by virtue of Lemma~\ref{lem:reg}, $\widetilde{\Vu}_{p}$ satisfies
\begin{gather}
  \label{eq:100}
  \widetilde{\Vu}_{p}\in \Hmv{s}\quad,\quad
  \NHm{\widetilde{\Vu}_{p}}{s} \leq C \NHcurl{\Vu_{p}}\;,
\end{gather}
where $C>0$ depends only on $\Omega$ and $\frac{1}{2}<s\leq 1$. 

In addition, $\widetilde{\Vu}_{p}$ is $\Ltwo$-orthogonal to $\Kern{\curl}\cap\Hcurl$,
see \eqref{eq:99}. Thus, using Ned\'el\'ec's trick \cite{NED80}, we have
\begin{gather}
  \label{eqo:15}
  \begin{aligned}
    \NLtwo{\widetilde{\Vu}_{p}-\Vu_{p}}^{2} & =
    \SPLtwo{\widetilde{\Vu}_{p}-\Vu_{p}}{{\widetilde{\Vu}_{p}-
      \Pi_{p}^{1}\widetilde{\Vu}_{p}+
      \Pi_{p}^{1}\widetilde{\Vu}_{p}-\Vu_{p}
    }} \\
  & = \SPLtwo{\widetilde{\Vu}_{p}-\Vu_{p}}{{\widetilde{\Vu}_{p}-
    \Pi_{p}^{1}\widetilde{\Vu}_{p}}}\;,
  \end{aligned}
\end{gather}
because, thanks to Lemma~\ref{lem:43} and \eqref{eq:98}, 
\begin{gather}
  \label{eq:115}
  \curl (\Pi_{p}^{1}\widetilde{\Vu}_{p}-\Vu_{p}) = 
  (\Pi_{p}^{2}-\mathsf{Id})\underbrace{\curl\Vu_{p}}_{\in \Cw^{2}_{p}(\mesh)} = 0
  \;.\\
  \label{eq:143}
  \Rightarrow\quad 
  \Pi_{p}^{1}\widetilde{\Vu}_{p}-\Vu_{p} \in 
  \{\Vv\in\mathcal{W}_{p}^{1}(\mesh):\;\curl\Vv=0\}\;.
\end{gather}
Hence, appealing to Lemma~\ref{lem:1}, with $C>0$ independent of $p$,
\begin{gather}
  \label{eqo:16}
  \NLtwo{\widetilde{\Vu}_{p}-\Vu_{p}} 
  \begin{aligned}[t]
    & \leq \NLtwo{\widetilde{\Vu}_{p}-\Pi_{p}^{1}\widetilde{\Vu}_{p}} 
    \leq C \frac{\log^{3/2}p}{p^{s}}
    \bigr(\SNHm{\widetilde{\Vu}_{p}}{s}+\NLtwo{\curl\widetilde{\Vu}_{p}}\bigr)\\
    & \leq
    C \frac{\log^{3/2}p}{p^{s}}\;\NHcurl{\Vu_{p}} \to 0 \quad\text{for }
  p\to\infty\;.
\end{aligned}
\end{gather}
Since bounded in $\Hmv{s}$, by Rellich's theorem ${(\widetilde{\Vu}_{p})}_{p\in\bbN}$
has a convergent subsequence in $\Ltwov$: owing to \eqref{eqo:16}, the same
subsequence of ${(\Vu_{p})}_{p\in\mathbb{N}}$ will converge in $\Ltwov$.
\end{proof}

\begin{theorem}
  \label{thm:maindir}
  Replacing $\Cw^{1}_{p}(\mesh)$ with ${{\Cw}}_{p}^{1}(\mesh)\cap\zbHcurl$ in
  \eqref{eqo:12}--\eqref{eqo:17}, the assertion of Thm.~\ref{thm:main} remains true.
\end{theorem}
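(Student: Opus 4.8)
The plan is to transcribe the proof of Thm.~\ref{thm:main} almost verbatim, systematically replacing $\Hcurl$ by $\zbHcurl$ and switching to the Dirichlet branch of Lemma~\ref{lem:reg}. The only structural novelty lies in the boundary behaviour of the Helmholtz splitting and of the interpolant, so I would concentrate on these two points and treat the rest as a transcription.

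First I would introduce the continuous Helmholtz decomposition fitted to the essential boundary condition: let $\widetilde{\Vu}_p\in\zbHcurl$ be the unique field satisfying $\curl\widetilde{\Vu}_p=\curl\Vu_p$ and $\SPLtwo{\widetilde{\Vu}_p}{\Vz}=0$ for all $\Vz\in\Kern{\curl}\cap\zbHcurl$. Testing this orthogonality against $\grad\phi$ with $\phi\in\zbHone$ and integrating by parts, the boundary term would drop because $\phi$ has vanishing trace, yielding $\Div\widetilde{\Vu}_p=0$ in $\Omega$ \emph{without} any condition on the normal trace. Hence $\widetilde{\Vu}_p\in\Hdiv\cap\zbHcurl$, which is exactly the \emph{second} instance of $\Cx$ in Lemma~\ref{lem:reg}, so the lemma would deliver $\widetilde{\Vu}_p\in\Hmv{s}$ together with $\NHm{\widetilde{\Vu}_p}{s}\le C\,\NHcurl{\Vu_p}$, using $\Div\widetilde{\Vu}_p=0$, $\curl\widetilde{\Vu}_p=\curl\Vu_p$, and $\NLtwo{\widetilde{\Vu}_p}\le\NLtwo{\Vu_p}$ (since $\Vu_p-\widetilde{\Vu}_p\in\Kern{\curl}\cap\zbHcurl$ is $\Ltwo$-orthogonal to $\widetilde{\Vu}_p$). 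I stress that here, in contrast to Thm.~\ref{thm:main}, the normal trace of $\widetilde{\Vu}_p$ is \emph{not} constrained, which is precisely why the second rather than the first embedding of Lemma~\ref{lem:reg} is the relevant one.

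Next I would rerun Ned\'el\'ec's trick \eqref{eqo:15}. The commuting diagram property of Lemma~\ref{lem:43}, the identity $\curl\widetilde{\Vu}_p=\curl\Vu_p\in\Cw^2_p(\mesh)$, and the projector property of $\Pi^2_p$ give $\curl(\Pi^1_p\widetilde{\Vu}_p-\Vu_p)=0$ just as in \eqref{eq:115}. By Lemma~\ref{lem:comp}, in the trace-preserving form \eqref{eq:146}, the interpolant $\Pi^1_p\widetilde{\Vu}_p$ would inherit the vanishing tangential trace of $\widetilde{\Vu}_p$, so $\Pi^1_p\widetilde{\Vu}_p-\Vu_p$ is a curl-free element of $\Cw^1_p(\mesh)\cap\zbHcurl$. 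It is then $\Ltwo$-orthogonal to $\Vu_p$ by the modified hypothesis (ii) and to $\widetilde{\Vu}_p$ by the defining property of the decomposition; both halves of the cross term vanish, leaving $\NLtwo{\widetilde{\Vu}_p-\Vu_p}\le\NLtwo{\widetilde{\Vu}_p-\Pi^1_p\widetilde{\Vu}_p}$. Applying Lemma~\ref{lem:1} to $\widetilde{\Vu}_p$---admissible since ${(\curl\widetilde{\Vu}_p)}_{|T}={(\curl\Vu_p)}_{|T}\in\Cpv_p(T)$---and invoking the uniform bound (iii) would show the right-hand side is $O(\log^{3/2}p\,/\,p^{s})\to0$. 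Rellich's theorem applied to the $\Hmv{s}$-bounded sequence $(\widetilde{\Vu}_p)$ then extracts an $\Ltwov$-convergent subsequence, and the same subsequence of $(\Vu_p)$ converges.

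The step I expect to be the main obstacle is the preservation of the zero tangential trace under $\Pi^1_p$ when the argument carries only the regularity $\widetilde{\Vu}_p\in\Hmv{s}$, $s\le1$, which falls short of the threshold $\Hmv{\frac12+s}$ underlying the global continuity statement \eqref{eq:141}. The remedy is the local structure already exploited inside Lemma~\ref{lem:1}: on each $T$ one writes $\widetilde{\Vu}_p=\grad v+\Poinc_T\curl\widetilde{\Vu}_p$ with $v\in\Hm[T]{1+s}$ and $\Poinc_T\curl\widetilde{\Vu}_p\in\Cw^1_p(T)\cap\Honev[T]$, so that the local operators are well defined and the tangential-trace-only dependence asserted in Lemma~\ref{lem:comp} forces a vanishing tangential trace of $\Pi^1_p\widetilde{\Vu}_p$ on every boundary face. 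Everything else would be a line-by-line copy of the proof of Thm.~\ref{thm:main}.
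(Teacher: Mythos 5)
Your proposal is correct and follows essentially the same route as the paper's own (very terse) proof: the paper likewise transplants the argument of Thm.~\ref{thm:main} verbatim, noting that $\widetilde{\Vu}_{p}$ now satisfies $\widetilde{\Vu}_{p}\times\Bn=0$ rather than $\widetilde{\Vu}_{p}\cdot\Bn=0$, that the branch $\Hdiv\cap\zbHcurl$ of Lemma~\ref{lem:reg} still applies, and that the projection based interpolation operators respect the essential boundary condition, \textit{cf.}~\eqref{eq:146}. Your closing observation---that trace preservation must be justified at the low regularity $\Hmv{s}$, $s\leq 1$, below the threshold of \eqref{eq:141}, and that the local splitting $\widetilde{\Vu}_{p}=\grad v+\Poinc_{T}\curl\widetilde{\Vu}_{p}$ from Lemma~\ref{lem:1} together with Lemma~\ref{lem:comp} supplies this---is a detail the paper leaves implicit, and you resolve it correctly.
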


\begin{proof}
  Since the projection based interpolation operators respect homogeneous Dirichlet
  boundary conditions, \textit{cf.} \eqref{eq:146}, 
  the proof runs parallel to that of Thm.~\ref{thm:main}.
  We point out that now $\widetilde{\Vu}_{p}\times\Bn=0$ on
  $\partial\Omega$ instead of $\widetilde{\Vu}_{p}\cdot\Bn = 0$, but 
  Lemma~\ref{lem:reg} can still be applied.
\end{proof}

Now we are able to switch from $\epsilonbf\equiv 1$ to general dielectric tensor, thus
completing the proof of the main theorem Thm.~\ref{thm:MAIN}.

\begin{proof}[of Thm.~\ref{thm:MAIN} in the Introduction] 
  We adapt the proof of \cite[Thm.~4.9]{HIP02}. Consider a $\Hcurl$-bounded sequence
  $\left(\Vw_{p}\right)_{p\in\bbN}$, $\Vw_{p}\in \Cw^{1}_{p}(\mesh)$, in the
  $L^{2}_{\epsilonbf}(\Omega)$-orthogonal complement $\Cx_{p}^{1}(\mesh)$ (see
  \eqref{eq:102}) of the discrete kernel of $\curl$, \textit{i.e.},
  \begin{gather}
    \label{eq:121a}
    \SPLtwo{\epsilonbf\Vw_{p}}{\Vz_{p}} = 0 \quad\forall \Vz_{p}\in\Kern{\curl}\cap
    \Cw^{1}_{p}(\mesh)\;.
  \end{gather}
  We continue with the  $\Ltwo$-orthogonal \emph{discrete} Helmholtz decomposition
  \begin{gather}
    \label{eq:122}
    \Vw_{p} = \Vu_{p} \oplus_{L^{2}} 
    \Vw_{p}^{0}\;,\quad
    \Vu_{p}\in\Cw^{1}_{p}(\mesh),\;\Vw_{p}^{0}\in\Kern{\curl}\cap \Cw^{1}_{p}(\mesh)\;.
  \end{gather}
  As $\SPLtwo{\Vu_{p}}{\Vz_{p}}=0$ for all $\Vz_{p}\in\Kern{\curl}\cap
  \Cw^{1}_{p}(\mesh)$, by Thm.~\ref{thm:main} we can find a subsequence, again
  denoted by $\left(\Vu_{p}\right)_{p\in\bbN}$, with
  \begin{gather}
    \label{eq:123}
    \begin{CD}
      \Vu_{p} @>{p\to\infty}>> \Vq\quad\text{in }\Ltwo\;.
    \end{CD}
  \end{gather}
  Since $\Vw_{p}$ satisfies \eqref{eq:121}, we conclude
  \begin{gather}
    \label{eq:124}
    \SPLtwo{\epsilonbf \Vw_{p}^{0}}{\Vz_{p}} = - \SPLtwo{\epsilonbf\Vu_{p}}{\Vz_{p}}
    \quad\forall \Vz_{p}\in \Kern{\curl}\cap\Cw^{1}_{p}(\mesh)\;.
  \end{gather}
  This can be regarded as a perturbed spectral Galerkin approximation of the
  following continuous variational problem: seek
  $\Vy\in\kHcurl:=\Kern{\curl}\cap\Hcurl$ such that
  \begin{gather}
    \label{eq:125}
    \SPLtwo{\epsilonbf \Vy}{\Vz} = - \SPLtwo{\epsilonbf\Vq}{\Vz}\quad\forall
    \Vz\in \kHcurl\;,
  \end{gather}
  which, obviously, has unique solution $\Vy$. From Strang's lemma
  \cite[Thm.~4.4.1]{CIA78} we infer
  \begin{gather}
    \label{eq:126}
    \NLtwo{\Vy-\Vw_{p}^{0}} \leq C \bigl(
      \inf\limits_{\Vz_{p}\in \Cw^{1}_{p}(\mesh)\cap\Kern{\curl}}
      \NLtwo{\Vy-\Vz_{p}} + \underbrace{\NLtwo{\Vu_{p}-\Vq}}_{\to 0\;\text{for }p\to\infty}
    \bigr)\;,
  \end{gather}
  where $C>0$ depends only on $\epsilonbf$. Next recall, that there is a
  representation
  \begin{gather}
    \label{eq:127}
    \Vy = \grad v + \Vh\;,\quad v\in \Hone\;,\quad
    \Vh\in\Chv^{1}(\Omega)\;,
  \end{gather}
  with $\Ch^{1}(\Omega)$ standing for the \emph{finite dimensional} first co-homology
  space $\Chv^{1}(\Omega)$ of harmonic vector fields, which is contained in
  $\kHcurl\cap\zbkHdiv$, \cite[Lemma~2.2]{HIP02} and \cite[Prop.~3.14,
  Prop.~3.18]{ABD96}. Owing to Lemma~\ref{lem:reg} it belongs to $\Hmv{s}$ for some
  $s>\frac{1}{2}$, which implies, thanks to Lemma~\ref{lem:1},
  \begin{gather}
    \label{eq:129}
    \NLtwo{\Vh-\Pi^{1}_{p}\Vh} \leq C \frac{\log^{3/2}p}{p^{s}}\SNHm{\Vh}{s}\to
    0\quad\text{for } p\to \infty\;.
  \end{gather}
  Further, the commuting diagram property of Lemma~\ref{lem:43} confirms that
  $\curl\Pi^{1}_{p}\Vh=0$. Besides, asymptotic density of the spectral family of
  Lagrangian finite element spaces in $\Hone$ means that also the first term on the
  right hand side of \eqref{eq:126} tends to zero as $p\to\infty$.

  Thus, selecting the same subsequence of $\left(\Vw_{p}\right)_{p\in\bbN}$
  (and keeping the notation), it is immediate that
  \begin{gather}
    \label{eq:128}
    \begin{CD}
      \Vw_{p} @>{p\to\infty}>> \Vq + \Vy\quad\text{in }\Ltwo\;.
    \end{CD}
  \end{gather}
  The case of $\Vw_{p}\in\Cw^{1}_{p}(\mesh)\cap\zbHcurl$ is amenable to almost the
  same proof: the boundary conditions are imposed on all fields and in the
  counterpart of \eqref{eq:127} the second co-homology space $\Chv^{2}(\Omega)$ has
  to be considered \cite[Lemma~2.2]{HIP02}.
\end{proof}

\section{Acknowledgment}
\label{sec:acknowledgment}

The author would like to thank his colleague C. Schwab for pointing out the
crucial reference \cite{CMI08}. He is grateful to A. Buffa, M. Costabel,
M. Dauge, and L. Demkowicz, and all the other mathematicians who have laid
the foundations for this work.


\end{document}